\numberwithin{equation}{section}
\newtheorem{theo}{Theorem}[section]
\newtheorem{lem}[theo]{Lemma}
\newtheorem{cor}[theo]{Corollary}
\newtheorem{claim}[theo]{Claim}
\newtheorem{conjecture}[theo]{Conjecture}
\newtheorem*{mainthm}{Main Theorem}
\theoremstyle{remark}
\newtheorem{rem}[theo]{Remark}
\newtheorem{example}[theo]{Example}
\renewcommand{\(}{\left(}
\renewcommand{\)}{\right)}
\newcommand{\R}{\mathbb{R}}
\renewcommand{\a}{\alpha}
\newcommand{\g}{\gamma}
\renewcommand{\d}{\delta}
\newcommand{\e}{\epsilon}
\renewcommand{\k}{\kappa}
\renewcommand{\l}{\lambda}
\newcommand{\D}{\Delta}
\renewcommand{\t}{\theta}
\renewcommand{\L}{\Lambda}
\newcommand{\ra}{\rightarrow}
\newcommand{\mrm}{\mathrm}
\newcommand{\Vol}{\operatorname{Vol}}
\newcommand{\Ric}{\operatorname{Ric}}
\newcommand{\inj}{\operatorname{inj}}
\newcommand{\divv}{\operatorname{div }}
\newcommand{\Hess}{\operatorname{Hess }}
\newcommand{\conj}{\operatorname{conj}}
\newcommand{\conv}{\operatorname{conv}}
\begin{document}
\title[Recognizing shape via 1st eigenvalue and mean curvature]{Recognizing shape via 1st eigenvalue, mean curvature and upper curvature bound}
\author{Yingxiang Hu, Shicheng Xu}

\address{Yau Mathematical Sciences Center, Tsinghua University, Beijing, China}
\email{huyingxiang@mail.tsinghua.edu.cn}

\address{School of Mathematical Sciences, Capital Normal University, Beijing, China \newline
\indent Academy for Multidisciplinary Studies, Capital Normal University, Beijing, China}
\email{shichengxu@gmail.com}
\date{\today}
\subjclass[2010]{53C20; 53C21; 53C24}
\keywords{1st eigenvalue, mean curvature, upper curvature bound, quantitative rigidity, pinching}
\begin{abstract}
	Let $M^n$ be a closed immersed hypersurface lying in a contractible ball $B(p,R)$ of the ambient $(n+1)$-manifold $N^{n+1}$. We prove that, by pinching Heintze-Reilly's inequality via sectional curvature upper bound of $B(p,R)$, 1st eigenvalue and mean curvature of $M$, not only $M$ is Hausdorff close to a geodesic sphere $S(p_0,R_0)$ in $N$, but also the ``enclosed'' ball $B(p_0,R_0)$ is close to be of constant curvature, provided with a uniform control on the volume and mean curvature of $M$. We raise a conjecture for $M$ to be a diffeomorphic sphere, and give some positive partial answer.
\end{abstract}
{\maketitle}

\section{Introduction}

The isoperimetric inequality in the Euclidean plane $\mathbb R^2$ has a long history, and has many generalizations both on Riemannian surfaces and higher dimensional manifolds (e.g. \cite{Osserman1978}, \cite{Milman-Schechtman}). One of those such that ``equality implies rigidity'' was founded around 1950's, as follows.

Let $D$ be a simply-connected domain on a surface $S$, $L$ be the length of boundary $\partial D$ and $A$ the area of $D$. Then (\cite{Alexandrov1945,Alexandrov-Strel'tsov1965}, \cite{Huber1954}, \cite{Barbosa-doCarmo1978}, cf.  \cite{Osserman1978} and references therein)
\begin{align}
\label{isoper2}
L^2 &\ge 4\pi A-(\sup_D K)A^2
\end{align}
where $K(p)$ is the Gauss curvature. If equality holds in (\ref{isoper2}), then $D$ is a geodesic disk in the space form of constant curvature $K$ (cf. \cite{Barbosa-doCarmo1978}, \cite{Chavel-Feldman80}).

Since then, however, not only few natural generalization of (\ref{isoper2}) with similar rigidity are known on higher dimensional manifolds, but also other rigidity phenomena with respect to the upper curvature bound are rarely studied.

In contrast, nowadays rigidity results and their quantitative version (e.g. \cite{Colding1996-1,Colding1996-2}, \cite{Cheeger-Colding1996},  \cite{Petersen1999}, \cite{Chen-Rong-Xu2016,Chen-Rong-Xu2018}, \cite{Miao-Wang2016}, \cite{Schroeder-Strake1989}, etc.) under curvature bounded from below have been extensively studied. They provide fundamental tools in study of Riemannian manifolds and their limit geometry under Gromov-Hausdorff topology.

In this paper, we prove a quantitative rigidity for a domain (resp. an immersed closed hypersurface) in a contractible neighborhood on a complete Riemannian manifold to be a geodesic ball (resp. geodesic sphere) of constant curvature $\d$, where $\d$ is \emph{upper curvature bound} of ambient space.

Our starting point is an observation on Heintze's result \cite{Heintze1988}. An open ball $B(p,R)$ is called \emph{geodesic contractible} if for any point $x\in B(p,R)$, there is a unique radial minimal geodesic from $p$ to $x$; i.e., $R$ is no more than injectivity radius of $p$.

\begin{theo}\label{main-theo-A} Let $M^n$ be an immersed, oriented and connected compact hypersurface without boundary in a geodesic contractible ball $B(p,R)$ of $N^{n+1}$, where the sectional curvature of $N$, $K_N \leq \d$ for some $\d\in \R$. Let $|M|$ denote $M$'s volume and $H$ the mean curvature. If the first non-zero eigenvalue $\lambda_1(M)$ of Laplace-Beltrami operator on $M$ satisfies one of the following conditions,
	\begin{enumerate}[(i)]
		\item $\d\geq 0$, $R\le\frac{\pi}{4\sqrt{\d}}\in (0,\infty]$ and
		\begin{align}\label{1steigeneq-1}
		\l_1(M)= n\d+\frac{n}{|M|}\int_{M}|H|^2,
		\end{align}
		\item $\d<0$ and
		\begin{align}\label{1steigeneq-2}
		\l_1(M)= n\d+n\max_{M}|H|^2,
		\end{align}
	\end{enumerate}
	then $M$ is an embedded geodesic sphere, and the enclosed ball is of constant curvature $\d$.
\end{theo}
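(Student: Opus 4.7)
The strategy is to track the equality cases in Heintze's proof \cite{Heintze1988} of the inequality $\l_1(M) \le n\d + \frac{n}{|M|}\int_M |H|^2$ (resp.\ $\l_1(M)\le n\d + n\max_M|H|^2$) that underlies \eqref{1steigeneq-1}--\eqref{1steigeneq-2}. The main analytic device is the exponential-based comparison map
\[
\Phi\cn B(p,R)\subset N \ra M^{n+1}_\d
\]
to the space form of constant curvature $\d$. Geodesic contractibility of $B(p,R)$ ensures that $\Phi$ is a diffeomorphism onto its image, and $K_N\le\d$ together with Rauch/Hessian comparison gives the metric non-increase $\Phi^*g_\d\le g_N$, with equality at a point if and only if every Jacobi field of $N$ orthogonal to the radial geodesic from $p$ through that point has space-form behavior. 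Composing with the immersion yields $\Psi=\Phi\circ i\cn M\ra M^{n+1}_\d$; via an isometric embedding of $M^{n+1}_\d$ into its model vector space ($\R^{n+1}$, $\R^{n+2}$, or Minkowski $\R^{n+1,1}$ according to the sign of $\d$) one obtains a family of ``coordinate'' functions $\xi_a\circ\Psi$ on $M$ inheriting the standard space-form identities, most importantly $\D_{M^{n+1}_\d}\xi_a=-(n+1)\d\,\xi_a$.

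By Hersch's balancing trick---applied via ambient isometries when $\d\le 0$, and via the conformal group of $S^{n+1}$ when $\d>0$, the constraint $R\le\pi/(4\sqrt\d)$ in case (i) guaranteeing the existence of a suitable balancing point whose enclosed ball still sits in a controllable spherical cap---one may choose the ambient origin $p_0$ so that $\int_M(\xi_a\circ\Psi)\,d\mu_M=0$ for every $a$. The $\xi_a\circ\Psi$ are then admissible test functions for $\l_1(M)$, and summing the Rayleigh inequalities, substituting the space-form identities, using $\Phi^*g_\d\le g_N$ to dominate the Dirichlet side, and finishing with integration by parts against the mean curvature vector (Gauss formula) together with Cauchy-Schwarz---in integrated form for case (i), and in its pointwise $\max_M|H|^2$ form for case (ii), where the sign of $\d<0$ makes the integrated Cauchy-Schwarz go the wrong way---recovers the underlying inequalities.

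The main obstacle is the equality analysis, which I would organize in three layers. First, saturating $\Phi^*g_\d\le g_N$ on the integrated Dirichlet side (using that $\sum_a d\xi_a\otimes d\xi_a$ is non-degenerate in the cotangent bundle of $M^{n+1}_\d$) forces $\Phi$ to be an infinitesimal isometry at every point of $M$; by the rigidity part of Rauch's comparison, this propagates along the radial geodesics from the balancing base point to give sectional curvature identically $\d$ on the region enclosed by $M$ in $N$. Second, saturating Cauchy-Schwarz in the Minkowski-type integration-by-parts step forces the mean curvature vector to be everywhere parallel to the radial direction from $p_0$ with constant length; in case (ii) the pointwise $\max_M|H|^2$ step is sharp only when $|H|$ is constant, which must be extracted here. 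Third, equality in each Rayleigh inequality makes each $\xi_a\circ\Psi$ a genuine $\l_1$-eigenfunction, which combined with the previous two layers forces $M$ to be totally umbilical inside a region of constant curvature $\d$. By the classical classification of umbilic hypersurfaces in a space form, $M$ is therefore an embedded geodesic sphere $S(p_0,R_0)$ about the balancing center, and geodesic contractibility of $B(p,R)$ together with the diffeomorphism property of $\Phi$ identify the enclosed ball $B(p_0,R_0)$, which inherits constant curvature $\d$.
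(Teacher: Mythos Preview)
Your overall strategy---track the equality cases in Heintze's chain of inequalities---is the same as the paper's, but your implementation has two genuine gaps.

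\textbf{Balancing via the conformal group when $\d>0$ does not work.} If $\phi$ is a M\"obius transformation of $S^{n+1}(1/\sqrt{\d})$ with conformal factor $\lambda$, then
\[
\sum_a|\nabla^M(\xi_a\circ\phi\circ\Psi)|^2=\lambda^2(\Psi(\cdot))\sum_j|d\Psi(e_j)|^2_{g_\d},
\]
so the Dirichlet side picks up a factor $\lambda^2$ and you no longer recover the Heintze inequality. The paper avoids this entirely: it chooses $p_0\in B(p,R)$ as the \emph{center of mass} of $M$ with respect to the energy $q\mapsto\int_M\Phi_\d(d(q,x))$, which directly gives $\int_M s_\d(r)x_i/r=0$ in normal coordinates at $p_0$ (these are exactly your $\xi_i\circ\Psi$ when $\Phi$ is based at $p_0$). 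For $\d>0$ the extra ``height'' function $c_\d(r)/\sqrt{\d}$ is handled simply by subtracting its mean; no conformal map is needed.

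\textbf{Layer 1 is over-stated.} Equality in the Dirichlet estimate only says $|d\Phi(e_j)|_{g_\d}=1$ for $e_j\in T_xM$; this controls $d\Phi$ on the $n$-dimensional subspace $T_xM$, not on all of $T_xN$. When $\nabla^M r\neq 0$ at $x$, the intersection $T_xM\cap(\partial_r)^\perp$ is only $(n-1)$-dimensional, so you are missing one Jacobi-field direction and cannot yet invoke Rauch rigidity to propagate constant curvature inward. You would need to first establish $\nabla^M r=0$ (i.e.\ $\nu\parallel\partial_r$) from Layer 2 before Layer 1 yields anything about the enclosed ball.

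The paper's route is shorter and sidesteps both issues. Working intrinsically in $N$ with $X=s_\d(r)\nabla^N r$, the divergence inequality $\divv_M X^\top\ge nc_\d-nH\langle X,\nu\rangle$ (Hessian comparison, not a space-form identity) integrates to $\int_M c_\d\le\int_M|H|s_\d$. Equality in the full chain forces equality here, which immediately gives $\langle X,\nu\rangle=\pm|X|$ and $|H|=\frac{c_\d}{s_\d}(r)$ along $M$; hence $M$ is a geodesic sphere $S(p_0,R_0)$ and the mean curvature matches the model value. Then the single scalar equation $\frac{1}{n}\Delta r=\frac{c_\d}{s_\d}(R_0)$ on the boundary, together with $K_N\le\d$ and the Riccati equation, forces $\Hess^N r=\frac{c_\d}{s_\d}g_r$ throughout $B(p_0,R_0)$, giving constant curvature $\d$. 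No umbilicity classification is needed.
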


Recall that in \cite[Theorem 2.1]{Heintze1988} Heintze proved that a submanfold $M^n$ (including higher codimension) lying a convex ball $B(p,R)$ of $N^{n+m}$ satisfies
\begin{align}\label{1steigen-ineq}
\l_1(M)&\leq \begin{cases}
n\d+\frac{n}{|M|}\int_{M}|H|^2, &\text{for $\d\geq 0$ and $R\le\frac{\pi}{4\sqrt{\d}}\in (0,\infty]$,}\\
n\d+n\max_{M}|H|^2, &\text{for $\d<0$.}
\end{cases}
\end{align}
Equality holds in \eqref{1steigen-ineq} if and only if $M$ is minimally immersed in some geodesic sphere of $N$.

The observation in Theorem \ref{main-theo-A} is that, if equality holds in \eqref{1steigen-ineq}, then the norm of Jacobi fields along radial geodesic starting from spherical center to $M$ is that of constant curvature $\d$. Moreover, the convexity of $B(p,R)$ can be weakened to be geodesic contractible; see \S \ref{sec:2} for a complete proof of Theorem \ref{main-theo-A}.

An interesting consequence of Theorem \ref{main-theo-A} is that, any interior perturbation (no matter large or small) of a ball of constant curvature $\d\le 0$  has to raise up interior curvature. Such fact can be seen from Gauss-Bonnet theorem when $n=1$. But in high dimension it is hard to see without using (\ref{1steigen-ineq}) that involves upper curvature bound.

\begin{rem}
	In some sense, the inequality \eqref{1steigen-ineq} can be viewed as a high-order generalization of \eqref{isoper2}.
	
	Indeed, as one of a series of inequalities for $k$-th mean curvature, \eqref{1steigen-ineq} was first proved by Reilly \cite{Reilly1977} for submanifolds in $\mathbb{R}^{n+m}$ ($m\ge 1$), and \eqref{1steigen-ineq} corresponds to the case of $k=1$.
	
	According to \cite[Corollaries 1, 2]{Reilly1977}, the $0$-mean curvature $=1$, and for an embedded hypersurface $M$ enclosing domain $\Omega$, Reilly's inequality degenerates to $$n |M|^2\ge (n+1)^2  \lambda_1(M) \cdot \operatorname{vol}(\Omega)^2,$$ which coincides with the isoperimetric inequality (\ref{isoper2}) on $\mathbb R^2$.
\end{rem}

Our main result is a quantitative version of Theorem \ref{main-theo-A} via pinching (\ref{1steigeneq-2}).

As working for a class of manifolds, certain uniform geometric bounds are usually required. We will work under the assumptions that ambient space $N$ admits a bounded sectional curvature $\mu \leq K_N\leq \d$, the volume and mean curvature of immersed hypersurface $M$ satisfy the following rescaling invariant bound
\begin{equation}\label{rescaling-inv-non-local-col}
|M|^\frac{1}{n}\|H\|_\infty\leq A.
\end{equation}

By Lemma \ref{lem-local-ball-volume-estimate} below, volume of extrinsic ball on $M$ admits a uniform lower bound,
\begin{equation}\label{relative-non-collapsing}
|B(x_0,r)\cap M|\ge C(n,\d, A)r^n, 
\end{equation}
for any $x_0\in M$ and $0<r\le \min\left\{\operatorname{diam}_N(M), \frac{\pi}{2\sqrt{\d}}\right\}$.
Hence we may view (\ref{rescaling-inv-non-local-col}) as an extrinsically non-collapsing condition.

Let $\|H\|_\infty=\max_{M}|H|$ be the $L^\infty$-norm of mean curvature vector of $M$. Let $s_\delta$ be the usual $\delta$-sine function (see (\ref{2.1}) below) and $s_\d^{-1}$ its inverse function, respectively. Let $\omega_n$ be the volume of unit sphere in $\mathbb R^{n+1}$. Throughout the paper we view $\frac{\pi}{\sqrt{\d}}=\infty$ for $\d\le 0$, and use  $\varkappa(\epsilon\,|\,A,\cdots)$ to denote a positive function on $\epsilon, A,\cdots$ that converges to $0$ as $\epsilon\to 0$ with  other quantities $A,\cdots$ fixed.

\begin{mainthm}
	Let $n$ be an integer $\ge 2$, $0<R< \infty$, and let $N^{n+1}$ be a complete Riemannian manifold with $\mu \leq K_N\leq \d$. Let $M^n$ be an immersed closed, connected and oriented hypersurface in a geodesic contractible ball $B(p,R)$. If $\d>0$, we further assume
	\begin{equation}\label{location-size}
	R\le \frac{\pi}{8\sqrt{\d}},\quad |M|\le \omega_n s_\d^n\(\frac{\pi}{4\sqrt{\d}}\).
	\end{equation}
	If $M$ satisfies (\ref{rescaling-inv-non-local-col}) for $A=A_1>0$, and
	\begin{align}\label{pinching-condition-1}
	n(\d+\|H\|_\infty^2) \leq \l_1(M)(1+\e),
	\end{align}
	holds with $0\le \e<\e_0(A_1,R,\d,\mu,n)$,
	then
	\begin{enumerate}
		\item[(M1)]
		$M$ is $C_1\e^\frac{1}{2(2n+1)} s_\d(R_0)$-Hausdorff-close to a geodesic sphere $S(p_0,R_0)$, where $R_0=s_\d^{-1}(\frac{1}{\sqrt{\d+\|H\|_{\infty}^2}})$ and
		$C_1=C_1(n,\d,\mu,R,A_1)$ is a positive constant;
		\item[(M2)]
		$B(p_0,R_0)$ is $\varkappa(\epsilon\,|\,A_1,R,\d,\mu,n,\alpha)$ $C^{1,\alpha}$-close to a ball of constant curvature $\delta$ for any $0\le \alpha<1$.
	\end{enumerate}
\end{mainthm}

From the proof, $p_0$ is center of mass of $M$ in $N$ with respect to an appropriate variation of distance function; see \S \ref{sec:2.4}.

The conclusion of Main Theorem is only known before in space forms (\cite{Colbois-Grosjean2007}, \cite{Aubry-Grosjean},  \cite{Hu-Xu2017}).
(M2) reveals a substantially new phenomena on Riemannian manifolds; a contractible domain can be recognized to be a ball of almost constant curvature by ``hearing'' the 1st eigenvalue of its boundary $M$ (or any immersed hypersurface $M$ close to its boundary), maximum of $M$'s mean curvature and interior curvature's upper bound.

Due to that $\l_1(M)\le n\d+\frac{n}{|M|}\int_{M}|H|^2$ is known to be true only for $\d\ge 0$ \cite{Heintze1988} or on a hyperbolic space for $n\ge 2$ \cite{Soufi-Ilias1992} (it fails for $n=1$ if $N^2$ is hyperbolic, cf. \cite{Heintze1988}), the pinching condition (\ref{pinching-condition-1}) is the best one can generally expect at present on Riemannian manifolds.

Recall that by Schoen-Yau's positive mass theorem \cite{Schoen-Yau1979,Schoen-Yau2017}, any perturbation $g_t$ of the canonical metric $g_0$ in a ball of $\mathbb R^n$ has to lower the infimum of scalar curvature to be negative, unless $g_t$ coincides with $g_0$. 
Combining with Theorem \ref{main-theo-A} and Main Theorem, we obtain the following corollary.

\begin{cor}\label{cor-main}
	Any non-trivial interior metric perturbation of a bounded Euclidean domain $\Omega$ must both raise up supremum of sectional curvature and lower the infimum of scalar curvature. 
	
	Furthermore, if the upper sectional curvature is small, then the perturbation is also small in the $C^{1,\alpha}$-sense (cf. Remark \ref{rem-curv}), provided with a uniform scalar curvature lower bound and a diameter upper bound. 
\end{cor}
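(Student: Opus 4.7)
The plan is to apply Theorem \ref{main-theo-A} (and the Main Theorem for the quantitative part) to Euclidean round spheres lying in the unperturbed region of $(\Omega,g_t)$. Writing $g_0$ for the Euclidean metric and $g_t$ for an interior perturbation, the support $K$ of $g_t-g_0$ is a compact subset of the interior of $\Omega$. Choose $p\in\Omega$ and $R>0$ so that $K\subset B(p,R)\subset\Omega$ (in the Euclidean metric), and such that $B(p,R)$ equipped with $g_t$ is geodesic contractible. Since $g_t=g_0$ in a neighborhood of $M:=\partial B(p,R)$, this $M$ is a round Euclidean sphere of radius $R$ viewed in $(\Omega,g_t)$, so $\l_1(M)=n/R^2$, $|H|\equiv 1/R$, and equality holds in \eqref{1steigen-ineq} with $\d=0$.

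For the first assertion, assume for contradiction $\sup K_{g_t}\le 0$. Then Theorem \ref{main-theo-A}(i) with $\d=0$ forces $(B(p,R),g_t)$ to be flat, and the boundary matching $g_t=g_0$ near $M$ promotes this to an isometry with the standard Euclidean ball; hence $g_t=g_0$ on $B(p,R)\supset K$, contradicting non-triviality. The complementary statement $\inf\operatorname{Scal}_{g_t}<0$ (unless $g_t=g_0$) follows from the rigidity case of Schoen--Yau's positive mass theorem applied to the asymptotically flat extension obtained by gluing $(\Omega,g_t)$ to Euclidean space outside $\Omega$.

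For the quantitative statement, assume $\sup K_{g_t}\le\d$ with $\d>0$ small. The same sphere $M$ still satisfies $n(\d+\|H\|_\infty^2)=n\d+n/R^2\le(1+\d R^2)\l_1(M)$, i.e., the pinching hypothesis \eqref{pinching-condition-1} with $\e=\d R^2$; the auxiliary conditions \eqref{rescaling-inv-non-local-col} and \eqref{location-size} are automatic from the Euclidean geometry of $M$ together with the diameter bound on $\Omega$. Applying the Main Theorem, conclusion (M2) yields that $(B(p,R),g_t)$ is $\varkappa(\e)$-close in $C^{1,\alpha}$ to a ball of constant curvature $\d$, which itself approaches a Euclidean ball in $C^{1,\alpha}$ as $\d\to 0$.

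The main obstacle is that the Main Theorem assumes the two-sided sectional curvature bound $\mu\le K_N\le\d$, whereas the corollary provides only a scalar curvature lower bound together with a diameter bound. I expect Remark \ref{rem-curv} to bridge this gap, either by invoking an independent $C^{1,\alpha}$-regularity theorem for metrics with controlled scalar curvature and non-collapsing volume, or by rerunning the compactness/contradiction scheme in the proof of the Main Theorem with scalar curvature in place of $\mu$. A secondary issue---ensuring geodesic contractibility of $(B(p,R),g_t)$ for possibly large perturbations---can be handled either by choosing $p$ and $R$ generically or by first reducing to the small-perturbation regime by continuity.
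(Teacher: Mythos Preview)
Your approach coincides with the paper's: the corollary is presented there as an immediate consequence of Theorem~\ref{main-theo-A} (sectional curvature must rise), Schoen--Yau's positive mass theorem (scalar curvature must fall), and the Main Theorem (the quantitative $C^{1,\alpha}$ closeness), with no further argument supplied.

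On the obstacle you flag: you are correct that the Main Theorem as stated needs $\mu\le K_N$, while the corollary offers only a scalar lower bound. However, Remark~\ref{rem-curv} is not the relevant pointer---it concerns the \emph{meaning} of the $C^{1,\alpha}$ conclusion (almost Einstein in $L^p$), not the hypotheses. The intended bridge is Remark~\ref{rem-fixed-position} (and the unlabeled remark immediately following Theorem~\ref{main-theo-B}), where the authors assert that the lower sectional bound ``may be weakened (e.g., an integral Ricci curvature lower bound).'' The elementary step you are missing is that $K_N\le\delta$ together with $\operatorname{Scal}\ge -C$ already forces a \emph{pointwise Ricci} lower bound in dimension $n{+}1$: each eigenvalue of $\Ric$ is at most $n\delta$ and their trace is at least $-C$, so the smallest is at least $-C-n^2\delta$. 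With this, the quantitative claim reduces to the Ricci-lower-bound variant of the Main Theorem. The paper does not actually prove that variant---it offers it only as a likely extension---so the second half of the corollary is no more rigorously established in the paper than in your proposal; but the route the authors have in mind is the one above, not a direct appeal to Remark~\ref{rem-curv}.
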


The above phenomenon can be viewed as a natural complement in an opposite direction of the positive mass theorem.

By the rigidity in \cite{Miao-Wang2016} and Main Theorem, the same conclusion in Corollary \ref{cor-main} holds for a ball of constant curvature $\d$ with radius $\le \frac{\pi}{8\sqrt{\d}}$, after replacing scalar curvature with Ricci curvature. Similar quantitative rigidity should hold when the lower Ricci curvature bound is close to $0$.

In general the mean curvature is too weak to determine the topology of a submanifold. At present we do not know if $M$ in Main Theorem could be very twisted or not. We propose the following conjecture.

Let $q>0$, and let $\|B\|_q=\left(\frac{1}{|M|}\int |B|^q\right)^{1/q}$ is the normalized $L^q$ norm of the 2nd fundamental form $B$ of $M$. 
\begin{conjecture}\label{conj-main}
	If in addition, in Main Theorem $M$ satisfies $|M|^\frac{1}{n}\|B\|_q\leq A$ for some $q>0$, then $M$ is embedded and diffeomorphic to a round sphere.
\end{conjecture}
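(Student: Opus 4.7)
The natural strategy is to argue by contradiction, combining the Main Theorem with an Allard-type regularity argument enhanced by the $L^q$-control on the second fundamental form. Suppose the conjecture fails. Then there exist $A,q>0$ and a sequence of hypersurfaces $M_i^n\hra B(p_i,R_i)\subset N_i^{n+1}$ satisfying the hypotheses of the Main Theorem with pinching defect $\e_i\to 0$ and $|M_i|^{\frac{1}{n}}\|B_i\|_q\le A$, but no $M_i$ is embedded and diffeomorphic to a sphere. After rescaling so that $\d_i+\|H_{M_i}\|_\infty^2=1$, the radii $R_{0,i}$ stay in a compact positive range, $|M_i|$ is uniformly bounded above and (by \eqref{relative-non-collapsing}) below, and $\|H_{M_i}\|_\infty$ is uniformly bounded. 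By (M2) the enclosed balls converge in $C^{1,\a}$ to a round ball $B_\infty$ of constant curvature $\d_\infty$, and by (M1) the $M_i$ converge in Hausdorff distance to a geodesic sphere $S_\infty\subset B_\infty$.

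The first step is to upgrade Hausdorff convergence $M_i\to S_\infty$ to $C^{1,\a}$-convergence. Working in the $C^{1,\a}$-chart of (M2), the ambient metric is $C^{1,\a}$-close to a round metric on $B_\infty$; the uniform bound on $\|H_{M_i}\|_\infty$ together with extrinsic non-collapsing \eqref{relative-non-collapsing} places $M_i$ within the scope of Allard's $\e$-regularity theorem. This yields, at a uniform scale, a decomposition of each $M_i$ into local $C^{1,\a}$-graphs over its approximate tangent planes, with uniformly bounded $C^{1,\a}$-norms. Combined with Hausdorff closeness to the smooth $S_\infty$, the nearest-point projection $\pi_i\colon M_i\to S_\infty$ is therefore well-defined, surjective, and a local $C^{1,\a}$-diffeomorphism for $i$ large.

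The second step is to rule out multi-sheetedness of $\pi_i$ by exploiting the $L^q$-bound on $B$. For $q>n$, Sobolev embedding implies $\|B\|_q$ pointwise controls the oscillation of the unit normal on each graph patch, and distinct sheets of $\pi_i$ over the same point would force a large oscillation of the Gauss map across a thin annulus, contradicting the Hausdorff closeness and $\e_i\to 0$. For $0<q\le n$ I would follow the scheme of Colbois--Grosjean and Aubry--Grosjean in space forms: apply a Michael--Simon Sobolev inequality transplanted to the almost-flat chart (with constants controlled by (M2)) and iterate it against a Simons-type identity $\D|B|^2\ge-c(|B|^2+|H|^2+|\mrm{Rm}_N|)|B|^2-\cdots$ to bootstrap $\|B\|_q$ into $\|B\|_{q'}$ with $q'>n$, reducing to the supercritical case.

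Having shown that $M_i$ is globally a $C^{1,\a}$-small normal graph over the round sphere $S_\infty\subset B_\infty$ for $i$ large, each $M_i$ is automatically embedded and diffeomorphic to $S^n$, contradicting the assumption. \textbf{The main obstacle} will be the sub-critical regime $0<q\le n$: the $L^q$-norm of $B$ does not directly control the oscillation of the Gauss map, and making the higher-integrability bootstrap quantitative and uniform in $i$ --- in particular, transferring the Michael--Simon inequality and the Simons identity from the limit space form to the $C^{1,\a}$-close ambient with explicit error control from (M2) --- appears to be the most delicate step.
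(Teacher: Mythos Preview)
The statement you are trying to prove is a \emph{conjecture}, not a theorem: the paper does not claim a proof, and says explicitly that for $0<q\le n$ the question is open even when the ambient space is a space form. What the paper does establish is the supercritical case $q>n$ (Theorem~\ref{main-theo-B}).

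For $q>n$ your plan would in principle work, but the paper's route is more direct and effective. Rather than a compactness/Allard argument followed by ruling out multi-sheetedness, the paper runs a Moser iteration on $|X^\top|=s_\d(r)|\nabla^M r|$ (Lemma~\ref{lem-almost-isometry}(2)): the Sobolev inequality \eqref{Hoffman-Spruck} is applied to $|X^\top|^{2\a}$, and the only place the second fundamental form enters is through the estimate $|d(|\nabla^M r|^2)|\le C(\tfrac{c_\mu}{s_\mu}+|B||\nabla^M r|)$, which is handled by H\"older with $\|B\|_q$. This yields a quantitative bound $\|\nabla^M r\|_\infty\le C\e^{\frac{q-n}{2(q-n+qn)}}$, from which the \emph{radial} projection $F:M\to S(p_0,R_0)$ is shown to be a local diffeomorphism with $||dF|^2-1|$ explicitly small; simple connectedness of $S^n$ ($n\ge 2$) then gives embeddedness. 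This avoids Allard regularity, avoids any limiting argument, and produces the explicit exponent in (1.5.1). Your Allard-plus-sheet-count scheme, by contrast, is nonquantitative and would still need a separate argument to pass from ``$C^{1,\a}$-close to a sphere in the $C^{1,\a}$-ambient of (M2)'' to an honest diffeomorphism, which is exactly what the radial-projection computation delivers for free.

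For $0<q\le n$ your bootstrap has a genuine gap, and this is the reason the conjecture is open. The Simons-type inequality you invoke reads, schematically, $\Delta|B|^2\ge -c\,|B|^2\cdot|B|^2-\text{(lower order)}$, i.e.\ the subsolution equation for $u=|B|^2$ has potential $|B|^2\in L^{q/2}$. Since $q/2\le n/2$, this potential is \emph{subcritical} for the Michael--Simon/Sobolev iteration on an $n$-dimensional hypersurface, and the first step of Moser iteration already fails to improve integrability; no amount of $C^{1,\a}$-closeness of the ambient from (M2) repairs this, because the obstruction is purely in the exponent. The paper's examples in \S\ref{sec:6} illustrate the analytic difficulty: one can build hypersurfaces with $\|H\|_\infty\to 1$, position vector pinched, and $\|B\|_q$ bounded for all $q<n$ (indeed $\|B\|_q\to 0$), yet carrying many thin necks --- those particular examples are excluded by the pinching \eqref{pinching-condition-1} because $\lambda_1\to 0$, but they show that the $L^q$ control with $q\le n$ by itself sees neither necks nor sheet multiplicity, so any proof must use the pinching in a much sharper way than your outline does.
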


Our next result verifies Conjecture \ref{conj-main} for $q>n$.

\begin{theo}\label{main-theo-B}
	Let the assumptions be as in Main Theorem. If in addition, 
	\begin{equation}\label{rescaling-inv-bound}
	|M|^\frac{1}{n}\|B\|_q\leq A_2 \quad(\text{ for some $A_2>0$ and $q>n$ }),
	\end{equation}
	then for $0\le \e<\e_1(A_1,A_2, q,R,\d,\mu,n)$,
	\begin{enumerate}\numberwithin{enumi}{theo}
		\item $M$ is embedded, diffeomorphic and $C_2\e^{\min\left\{ \frac{1}{2(2n+1)},\frac{q-n}{2(q-n+qn)}\right\}}$-almost isometric to a geodesic sphere $S(p_0,R_0)$, where $C_2=C_2(n,\d,\mu, q, R,A_1,A_2)$ is a positive constant;
		\item $M$ is  $\varkappa(\epsilon|A_1,A_2, q,R,\d,\mu,n,\alpha)$-$C^{\alpha}$ close to a round sphere of constant curvature $1/(s_\d(R_0))^2$ with $0\le\alpha<1$.
	\end{enumerate}
\end{theo}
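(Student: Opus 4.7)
The plan is to use the additional bound $|M|^{1/n}\|B\|_q\le A_2$ with $q>n$ to upgrade the Hausdorff closeness of (M1) to a $C^{1,\gamma}$-graph representation of $M$ over the geodesic sphere $S(p_0,R_0)$, whence the intrinsic almost-isometry (1) and the $C^\alpha$-closeness (2) will follow. First I invoke the Main Theorem to obtain $p_0$, $R_0=s_\delta^{-1}(1/\sqrt{\delta+\|H\|_\infty^2})$, the Hausdorff bound $\operatorname{dist}_H(M,S(p_0,R_0))\le\eta:=C_1\epsilon^{1/(2(2n+1))}s_\delta(R_0)$, and the $C^{1,\alpha}$-closeness of $B(p_0,R_0)$ to a ball of constant curvature $\delta$. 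In polar coordinates at $p_0$, (M2) lets me treat the ambient metric as $\varkappa(\epsilon)$-close in $C^{1,\alpha}$ to the constant-curvature model, so standard comparison tools apply up to a controlled error.

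Next I would use $\|B\|_q$ with $q>n$ to obtain uniform local regularity of $M$. Combining the extrinsic non-collapsing (\ref{relative-non-collapsing}) with a Michael--Simon Sobolev inequality on $M$ (valid in the almost-Euclidean balls provided by (M2), with error absorbed into $\varkappa(\epsilon)$), an Allard/Moser-type iteration promotes the $L^q$-bound on $B$ into the following local statement: for some scale $r_0=r_0(A_1,A_2,q,R,\delta,\mu,n)$ and every $x\in M$, the connected component of $M\cap B(x,r_0)$ through $x$ is a graph over $T_xM$ with $C^{1,\gamma}$-norm uniformly bounded in the same data, where $\gamma=1-n/q>0$. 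In particular the unit normal field $\nu_M$ is uniformly $C^\gamma$ on $M$, and combined with the $\eta$-tube from (M1) this forces every tangent plane $T_xM$ to be nearly parallel to the tangent plane of $S(p_0,R_0)$ at the radial projection of $x$ (otherwise the local graph would exit the $\eta$-tube before running out of its size). Hence the radial projection $\pi\colon M\to S(p_0,R_0)$ from $p_0$ is a local diffeomorphism, and by degree and connectedness a global one onto $S(p_0,R_0)\cong S^n$.

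Writing $M=\{(u,R_0+f(u)):u\in S(p_0,R_0)\}$ in the $(u,r)$-chart adapted to polar coordinates, one has $\|f\|_{C^0}\le\eta$ and $\|f\|_{C^{1,\gamma}}\le C(n,\delta,\mu,q,R,A_1,A_2)$. A Gagliardo--Nirenberg interpolation between these two norms produces a $C^1$-bound of the form $\|f\|_{C^1}\lesssim\eta^{\gamma/(1+\gamma)}$; substituting $\gamma=1-n/q$ and $\eta\lesssim\epsilon^{1/(2(2n+1))}$, and comparing the resulting exponent with the Hausdorff exponent itself, yields the $\min\{1/(2(2n+1)),(q-n)/(2(q-n+qn))\}$ of the statement, the second branch being the binding one precisely when $q$ is close to $n$. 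Since the intrinsic metric of a $C^1$-graph differs from the base metric by a term quadratic in $\nabla f$, this $C^1$-control delivers the almost-isometry of (1); coupling it with (M2), which makes the intrinsic geometry of $S(p_0,R_0)$ itself $C^{1,\alpha}$-close to the round sphere of curvature $1/s_\delta(R_0)^2$, and interpolating in $\alpha$ produces (2), the implicit rate being absorbed into $\varkappa$.

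I expect the main obstacle to be two local-to-global steps. First, making the Michael--Simon/Allard local-graph estimate fully uniform in an ambient that is only $C^{1,\alpha}$-close to a space form rather than equal to one, which requires tracking how the $\varkappa(\epsilon)$-error from (M2) propagates through the Sobolev inequality and the Moser iteration without degrading the scale $r_0$. Second, pinning down the interpolation exponent so that it combines correctly with the Hausdorff rate from the Main Theorem to reproduce the precise $\min$ in the statement; this needs the right pair of norms to interpolate, and care that the non-trivial geometry of $S(p_0,R_0)$ injects no additional losses. Once these are settled, embeddedness, the diffeomorphism type, and the $C^\alpha$-closeness to a round sphere follow by standard perturbation arguments for close hypersurfaces.
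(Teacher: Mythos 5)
Your route is genuinely different from the paper's, and it contains a concrete quantitative gap in the exponent. The paper does not pass through a $C^{1,\gamma}$ graph estimate at all: it defines the radial projection $F\colon M\to S(p_0,R_0)$ and proves a two-part estimate (Lemma~\ref{lem-almost-isometry}). Part~(\ref{est-df}) uses only Jacobi-field comparison and (M1) to show $\bigl||dF_x(u)|^2-1\bigr|\le C\,\e^{1/(2(2n+1))}+\|\nabla^M r\|_\infty^2$. Part~(\ref{est-grad-r}) is the crucial one: it bounds $\|\nabla^M r\|_\infty$ by running a Moser--Nirenberg iteration directly on $\xi=|X^\top|=s_\d(r)|\nabla^M r|$, using the Hoffman--Spruck Sobolev inequality, the $L^q$ bound on $B$ to control $|\mrm{d}\xi^{2\a}|$, and --- crucially --- the $L^2$ starting input $\|X^\top\|_2^2\lesssim\e/\|H\|_\infty^2$ from Lemma~\ref{lem-l2-bound-perp}. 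This produces $\|\nabla^M r\|_\infty\lesssim\e^{(q-n)/(2(q-n+qn))}$, hence the stated $\min$.

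Your interpolation step does not reproduce this rate. Starting from $\|f\|_{C^0}\lesssim\eta=C_1\e^{1/(2(2n+1))}s_\d(R_0)$ and a uniform $C^{1,\gamma}$ bound with $\gamma=1-n/q$, the interpolation $\|f\|_{C^1}\lesssim\|f\|_{C^0}^{\gamma/(1+\gamma)}\|f\|_{C^{1,\gamma}}^{1/(1+\gamma)}$ gives $\|f\|_{C^1}\lesssim\e^{\frac{1}{2(2n+1)}\cdot\frac{q-n}{2q-n}}$, i.e.\ the \emph{product} of the two exponents, not the $\min$. One checks easily that $\frac{q-n}{2(2n+1)(2q-n)}$ is strictly smaller than both $\frac{1}{2(2n+1)}$ and $\frac{q-n}{2(q-n+qn)}$ for all $q>n\ge 2$, so the rate you obtain is strictly weaker than the one in the statement. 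The loss comes from feeding the comparatively weak Hausdorff ($L^\infty$) bound into the interpolation; the paper instead iterates from the much stronger $L^2$ pinching $\|X^\top\|_2\lesssim\sqrt{\e}/h$, which is exactly what produces the $\frac{q-n}{2(q-n+qn)}$ exponent. So to match the theorem by your route you would need to interpolate from a stronger low-regularity anchor (or run a Moser-type iteration on $\nabla^M r$ as the paper does) rather than from the Hausdorff bound alone.

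Two further comments. First, the step where you invoke a Michael--Simon/Allard local-graph theorem in an ambient that is only $\varkappa(\e)$-$C^{1,\alpha}$-close to a space form is plausible but not routine; you flag this yourself, and it is exactly the kind of uniformity the paper avoids having to establish by staying with the scalar quantity $|X^\top|$. Second, your qualitative conclusions (radial projection being a global diffeomorphism onto $S(p_0,R_0)$, hence embeddedness and diffeomorphism type; then $C^\alpha$-closeness via $L^{q/2}$-integral Ricci bounds and $\varkappa$-non-collapsing as in Theorem~\ref{thm-Calpha-convergence}) do agree with the paper's proof, so the approach would yield a correct but quantitatively weaker version of Theorem~\ref{main-theo-B} once the Allard step is made rigorous.
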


The $C^{\alpha}$-closeness of metric tensors in (1.5.2) follows directly from (1.5.1) and  $C^{\alpha}$-regularity \cite[Theorem 2.35]{Tian-Zhang2016} under $L^{q/2}$-integral Ricci curvature bound with $q>n$ and $\varkappa$-non-collapsing condition.

For $0<q\le n$, Conjecture \ref{conj-main} is open even for hypersurfaces in a space form. Note that by Main Theorem, under condition \eqref{rescaling-inv-non-local-col}, pinching phenomena of \eqref{pinching-condition-1} essentially can happen only in space forms, as long as the ambient space around has trivial topology and bounded geometry. We will give some examples in \S \ref{sec:6} that do not satisfy (\ref{rescaling-inv-bound}) but support Conjecture \ref{conj-main}.

Several remarks on Main Theorem and Theorem \ref{main-theo-B} are given in below.
\begin{rem}
	What earlier known about pinching \eqref{pinching-condition-1} is very restrictive when the ambient space is a Riemannian manifold.
	In \cite{Grosjean-Roth2012} Grosjean and Roth proved Theorem \ref{main-theo-B} under some technical assumptions such that the hypersurface $M$ was required to be contained in a small geodesic ball of radius $\le \epsilon$, where $\epsilon$ coincides with the pinching error in (\ref{pinching-condition-1}). Thus in their case $M$ approaches to a point as $\epsilon\to 0$, and (M2) in Main Theorem is trivially satisfied.
	
	According to our proof, the condition of lower bounded sectional curvature, $K_N\ge \mu$, may be weakened (e.g., an integral Ricci curvature lower bound); cf. Remark \ref{rem-fixed-position}.
	
	Under $L^q$ bound $(q>n)$ of 2nd fundamental form \eqref{rescaling-inv-bound}, results corresponding to Theorem \ref{main-theo-B} has been recently studied and known in space forms; see \cite{Colbois-Grosjean2007}, \cite{Aubry-Grosjean} and \cite{Hu-Xu2017}. 
\end{rem}

\begin{rem}
	By the discussion on (\ref{rescaling-inv-non-local-col}) above Main Theorem, typical examples that violate (\ref{rescaling-inv-non-local-col}), and thus are not covered by Main Theorem, contain the boundary $\partial U_r$ of a $r$-neighborhood of a high co-dimensional submanifold $X$ (not a point) with $r \ll\operatorname{diam}X$.
	
	We prove in \cite{Hu-Xu2019} that, the conclusion of Theorem \ref{main-theo-B} holds for
	$\partial U_r$, provided that it is convex. Hence extrinsically collapsed convex hypersurfaces do not satisfy pinching condition (\ref{pinching-condition-1}).
\end{rem}

\begin{rem}\label{rem-curv}
	Under our setting $\mu\le K_N\le \d$, one cannot expect that sectional curvature of  $\Omega_0=B(p_0,R_0)$ is pointwise close to $\d$. It is not difficult to construct a warped product manifold $\Omega$ where $M$ is its slice, and pinching condition (\ref{pinching-condition-1}) holds with arbitrary small $\epsilon$, but there are points in the enclosed domain $\Omega_0$ by $M$ around which the sectional curvature is arbitrarily away from $\d$.
	
	Instead of pointwise curvature closeness, for any $p\ge 1$ $B(p_0,R_0)$ in Main Theorem is almost Einstein in the sense of normalized $L^{p}$-norm, i.e.,
	\begin{equation}
	\|\Ric_{\Omega_0}-n\delta g_{\Omega_0}\|_{p}\le \varkappa(\epsilon|A_1,R,\delta,\mu,n,p),
	\end{equation}
	which directly follows from \cite[Lemma 1.4]{Chen-Rong-Xu2018} (cf. \cite{Anderson1990}) as a standard Schauder estimate.
\end{rem}

\begin{rem}\label{rem-CC-1}
	Unlike the rigidity of (\ref{isoper2}), in our main theorems it is necessary for $M$ to be contained in a ball of radius $\frac{\pi}{2\sqrt{\d}}$ when $\d>0$, though $\frac{\pi}{8\sqrt{\d}}$ (resp. $\frac{\pi}{4\sqrt{\d}}$) in Main Theorem (resp. Theorem \ref{main-theo-A}) is technically required. A counterexample can be easily constructed via smoothing a cylinder $[0,t]\times S^2$ glued with one cap at $0$, where the geometry of boundary at $t$ and interior curvature bound does not change as $t$ varies.

\end{rem} 	

\begin{rem}\label{rem-CC-2}
	Due to natural geometric restrictions, the criteria in Main Theorem generally fails when $B(p,R)$ contains nontrivial topology via cut points. The connected sum of a flat torus $\mathbb T^2$ with a flat disk $D^2$ via a think neck of non-positive curvature glued around center of $D^2$ provides a counterexample.
\end{rem}

\begin{rem}
	Compared with Cheeger-Colding's quantitative rigidity for warped products \cite{Cheeger-Colding1996},
	which overcomes cut points,	
	the ``boundary condition'' via pinching  (\ref{pinching-condition-1}) is less restrictive; i.e.,  Main Theorem is applicable for an (immersed) hypersurface $M$, which could be very twisted  and a priori lie far away from a level set of a warping function. For example, the boundary of a ball may be far away from a level set of a new distance function after an interior metric perturbation, as what has happened in Corollary \ref{cor-main}.
\end{rem}

\begin{rem}\label{rem-almost-rig}
	The principle of almost rigidity behind (M2) is that, if $\mu\le K_N\le \d$ and the Hessian $\nabla^2 r$ of a distance function to a point $p_0\in N$ is close to $\frac{c_\d(R)}{s_\d(R)}g_r$ along the boundary of $B(p_0,R)$ (without knowing whether $|J(R)|$ close to $s_\d(R)$ for a normal Jacobi field $J$), where metric tensor $g_N=dr^2+g_r$, then the interior of $B(p_0,R)$ is almost isometric to that of constant curvature $\d$; see Lemma \ref{lem-quantitative-rigidity}.
	
	In contrast, the principle in Cheeger-Colding's almost rigidity \cite{Cheeger-Colding1996} requires, essentially, the same closeness hold over the whole ball (or more generally, an annulus) in the $L^1$ sense.
\end{rem}

\begin{rem}
	Motivated by Corollary \ref{cor-main} and the quasi-local mass rigidity (cf. \cite{Yau2001}, \cite{Shi-Tam2002}), it is natural to ask that whether the lower scalar curvature bound can be used similarly to detect the interior perturbation of a bounded Euclidean domain? 
	
	Note that by Colding \cite{Colding1997} and Cheeger-Colding \cite{Cheeger-Colding1997}, any interior perturbation of a bounded Euclidean domain cannot be large in the bi-H\"older sense, if it lowers down only a small amount of Ricci curvature. For a quantitative rigidity of positive mass theorem in another direction, see for example \cite{Sormani-Allen2019}.
\end{rem}

In the end of Introduction we point out the main ideas and difficulty in proving Main Theorem.

We will prove the Hausdorff closeness (M1), by first improving estimates in \cite{Grosjean-Roth2012} to conclude that $M$ lies in a small neighborhood of a geodesic sphere $S(p_0,R_0)$, where $p_0$ is the center of mass of $M$ in $B(p,R)$. Then based on an observation in \cite{Colbois-Grosjean2007}, via contacting a ``standard'' sphere-tori to $M$, it is not difficult (see \S \ref{sec:3}) to show that $S(p_0,R_0)$ is also near $M$.

The main ideas in deriving a pointwise estimate on the position of $M$ are from \cite{Colbois-Grosjean2007}; i.e, first to transform pinching condition (\ref{pinching-condition-1}) into an $L^2$ pinching (\ref{ineq-Ie}) on position vector $X$;  then to apply Moser iteration to bound $\|X\|_\infty$. We improve corresponding estimates in \cite{Grosjean-Roth2012} and drop a technical assumption in \cite{Grosjean-Roth2012}, via a careful analysis combined with some geometrical observation in terms of the out-radius $R$. This is done in \S \ref{sec:4}.

By (M1), a naive approach for (M2) is arguing by contradiction. Up to a rescaling, there is a sequence of pairs $(M_i,N_i)$ converging to a limit $(S(p_\infty,1), N_\infty)$ in Gromov-Hausdorff topology, where $N_\infty$ is $C^{1,\alpha}$-Riemannian manifold and $S(p_\infty,1)$ is a geodesic sphere of radius $1$ in $N_\infty$. One may guess the pinching condition (\ref{pinching-condition-1}) can be passed to the limit pair $(S(p_\infty,1), N_\infty))$ with zero pinching error, such that rigidity for the limit may follow from similar arguments as Theorem \ref{main-theo-A}.

According to \cite{Fukaya1987}, by passing to a subsequence, $(M_i,d_i,\operatorname{dvol}_i)$ converges to $(S(p_\infty,1), d_\infty, \mu_\infty))$ in measured Gromov-Hausdorff topology, where $d_i$ and $d_\infty$ are the restricted distance from the ambient space respectively, $\operatorname{dvol}_i$ is the Riemann-Lebesgue measure and $\mu_\infty$ is its limit measure.
By Fukaya's observation in \cite{Fukaya1987}, $\limsup_{i\to \infty}\lambda_1(M_i)\le \lambda_1(S(p_\infty,1),\mu_\infty)$.

If $\mu_\infty$ coincides with Hausdorff measure $\operatorname{dvol}_\infty$ of $S(p_\infty,1)$, then it is easy to apply similar arguments as Theorem \ref{main-theo-A} to derive $B(p_\infty,1)$ is isometric to a ball in space form.

A crucial difficulty is that, if $M_i$ is far away from an embedded diffeomorphic sphere, then $\mu_\infty=\operatorname{dvol}_\infty$ generally fails, and thus the relation between pinching condition (\ref{pinching-condition-1}) and the limit geometry is lost.

Here is our approach. Instead of looking at the limit, we will translate pinching condition (\ref{pinching-condition-1}) along $M$ to its position vector $X$ at $p_0$ such that $X$ is close to $R_0=1$ (up to a rescaling) and perpendicular to $M$ in the $L^2$ sense. By refining the relation between divergence of $X$ on $M$ and mean curvature $H$ (see Lemma \ref{lem-refine-heintze}, cf. Lemma \ref{lem-heintze}), $H$ and $\frac{1}{n}\Delta r$ are close in the $L^1$ sense, where $r=d(p_0,\cdot)$. Since by (M1), $\|H\|_\infty$ is close to $\frac{c_\d}{s_\d}(1)$, we see that $\frac{1}{n}\Delta r$ is also close to $\frac{c_\d}{s_\d}(1)$ along $M$ in the $L^1$ sense. Via the weighted monotonicity of $M$'s volume in extrinsic balls (Lemma \ref{lem-local-ball-volume-estimate}, cf. \cite{Colding-Minicozzi-book}), we transmit the $L^1$ estimate on $\Delta r$ over $M$ to points sufficient dense in $S(p_0,1)$. Then by the almost rigidity principle mentioned in Remark \ref{rem-almost-rig}, we prove that the interior Jacobi fields admit a uniform control, and thus prove (M2). This is done in \S \ref{sec:3}.

The remaining of the paper is organized as follows. We recall some necessary facts and tools in \S \ref{sec:2}, and give a proof of Theorem \ref{main-theo-A} as a preliminary knowledge.
\S \ref{sec:3} and \S \ref{sec:4} are devoted to the proof of Main Theorem. In \S \ref{sec:5} we prove Theorem \ref{main-theo-B}. A series of examples that partially support Conjecture \ref{conj-main} are given in \S \ref{sec:6}. Appendix is for proofs of some technical lemmas.

{\bf Acknowledgements}. The first author was supported by China Postdoctoral Science Foundation (No.2018M641317). The second author was supported partially by National Natural Science Foundation of China [11871349], [11821101], by research funds of Beijing Municipal Education Commission and Youth Innovative Research Team of Capital Normal University.

\section{Preliminaries}\label{sec:2}
In this section we provide notations and facts used later. The $\delta$-sine function $s_\d$ is defined by
\begin{equation}\label{2.1}
\begin{split}
s_\d(r):=\left\{ \begin{aligned}
&\frac{1}{\sqrt{\d}}\sin (\sqrt{\d} r), \quad &\text{if}&~\d>0~\text{and}~r\in \left[0,\frac{\pi}{2\sqrt{\d}}\right);\\
&r,                                     \quad &\text{if}&~\d=0~\text{and}~r\in [0,\infty);\\
&\frac{1}{\sqrt{-\d}}\sinh(\sqrt{-\d}r),\quad &\text{if}&~\d<0~\text{and}~r\in [0,\infty),
\end{aligned}
\right.
\end{split}
\end{equation}
and $\delta$-cosine function is defined by $c_\d(r):=s'_\d(r)$.
Clearly, the following identities hold:
\begin{align*}
c'_\d=-\d s_\d, \quad c^2_\d+\d s_\d^2=1.
\end{align*}

\subsection{Convexity radius}\label{sec:2.1}
Let $N$ be a (maybe non-complete) Riemannian manifold. The exponential map, $\exp_p:T_pN\to N$, from tangent space at $p$ to $N$ is well-defined locally. The {\em injectivity radius} of a point $p\in N$, $\inj(p)$, is defined to be the supremum of radii of open balls $B(o,r)$ centered at origin $o$ of $T_pN$ where the restriction $\exp_p|_{B(o,r)}$ is a well-defined diffeomorphism onto its image. The {\em conjugate radius} of $p$, $\conj(p)$, is defined to be the supremum of radii of open balls $B(o,r)$ in $T_pN$ which contains no critical point of $\exp_p$. The {\em convexity radius} of $p$, $\conv(p)$, is defined to be the supremum of radii of open balls centered at $p$ that is strongly convex (cf. \cite{Xu2018}). We call an open set $U\subset N$ {\em convex}, if any two points of $U$ are joined by a unique minimal geodesic in $N$ and its image lies in $U$. An open ball $B(p,r)$ is called {\em strongly convex}, if any $B(q,s)\subset B(p,r)$ is convex.

By definition, $\conv(p)\le \inj(p)$, and  $\inj(p)\le \conj(p)$. The following pointwise estimates of $\inj(p)$ and $\conv(p)$ will be used later.

\begin{lem}[\cite{Xu2018},\cite{Mei2016}]\label{theo-2.2}
	Assume that $B(p,2R)$ has a compact closure in $N$. For any point $q\in B(p,R)$, the followings hold.
	\begin{align}\label{2.3}
	\inj(q)&\geq \min\{\inj(p),\conj(q)\}-d(p,q),\\
	\label{2.4}
	\conv(p)&\geq \frac{1}{2}\min\{\frac{\pi}{\sqrt{\d}},\inj(p)\},
	\end{align}
	where $\d=\sup\{K_N(x): x\in B(p,\inj(p))\}$ is the upper bound of sectional curvature in $B(p,\inj(p)).$
\end{lem}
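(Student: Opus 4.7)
My plan is to prove the two estimates by separate arguments, both relying on the structure of the cut locus and Hessian comparison, with (2.3) serving as an input for (2.4).

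For (2.3), I would argue by contradiction. Set $\ell := \min\{\inj(p), \conj(q)\} - d(p,q)$ and assume $\inj(q) < \ell$; in particular $\inj(q) < \conj(q)$. The standard characterization of the cut locus then provides two distinct unit-speed minimizing geodesics $\gamma_1, \gamma_2 \colon [0, \inj(q)] \to N$ from $q$ to a common non-conjugate cut point $x$. The triangle inequality gives $d(p,x) \leq d(p,q) + \inj(q) < \inj(p)$, so $x$ lies in the injectivity ball of $p$ and admits a unique minimal geodesic $\sigma_{px}$ from $p$. Let $\sigma_{pq}$ denote the unique minimal geodesic from $p$ to $q$; the concatenations $\sigma_{pq} * \gamma_i$ are then two broken paths from $p$ to $x$, each of length $d(p,q) + \inj(q)$. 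In the sub-case $d(p,x) = d(p,q) + \inj(q)$, each concatenation is itself minimizing and hence equal to the smooth geodesic $\sigma_{px}$, forcing $\gamma_1'(0) = \gamma_2'(0)$ and so $\gamma_1 = \gamma_2$, a contradiction. The harder sub-case $d(p,x) < d(p,q) + \inj(q)$ is the main obstacle; here I would invoke the local Klingenberg lemma to further arrange $\gamma_1'(\inj(q)) = -\gamma_2'(\inj(q))$, yielding a smooth geodesic loop $c := \gamma_1 * \gamma_2^{-1}$ at $q$ of length $2\inj(q)$ contained in $B(p,\inj(p))$, and then analyze the lift $\exp_p^{-1}(c)$ in the pullback Riemannian metric on $B(0,\inj(p)) \subset T_p N$, using that all geodesics of the pullback through the origin are straight rays, to derive a contradiction.

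For (2.4), I would prove strong convexity of $B(p, R)$ for $R := \tfrac{1}{2}\min\{\pi/\sqrt{\d}, \inj(p)\}$ via Hessian comparison for the squared distance. Given any $q \in B(p, R)$ and any $s > 0$ with $B(q, s) \subset B(p, R)$, applying (2.3) yields $\inj(q) \geq \inj(p) - d(p,q) > R$, so the function $f := \tfrac{1}{2} d(q,\cdot)^2$ is smooth throughout $B(q, 2R)$. Writing $r_q := d(q, \cdot)$, the Hessian comparison theorem under $K_N \leq \d$ gives
\[
\Hess f \geq dr_q \otimes dr_q + r_q \frac{c_\d(r_q)}{s_\d(r_q)} \bigl(g - dr_q \otimes dr_q\bigr),
\]
which is strictly positive throughout $B(q, 2R)$, since $c_\d(r_q) > 0$ for $r_q < \pi/(2\sqrt{\d})$. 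Along any minimal geodesic $\gamma$ joining two points of $B(q, s)$, strict convexity of $f \circ \gamma$ forces the maximum to be attained at the endpoints, hence $\gamma \subset B(q, s)$. The remaining subtlety is to verify that $\gamma$ does not temporarily exit the domain of strict convexity before returning; this is handled by a standard continuity-in-endpoints argument, shrinking $s$ slightly and passing to a limit. Combining convexity of all such $B(q,s)$ yields the claimed lower bound on $\conv(p)$.
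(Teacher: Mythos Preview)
The paper does not prove this lemma; it is quoted from \cite{Mei2016} and \cite{Xu2018} and only the attribution sentence follows the statement. So there is no in-paper proof to compare against, and your proposal should be judged on its own merits relative to those references.

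Your outline for \eqref{2.3} follows the Klingenberg-loop strategy used in the cited works, and lifting to $(T_pN,\exp_p^{\ast}g)$ is indeed the key device in \cite{Xu2018}. The equality sub-case is handled correctly. However, the strict-inequality sub-case is left as a genuine gap: you produce a geodesic loop $\tilde c$ at $\tilde q$ in the pullback ball, observe that radial $g^{\ast}$-geodesics through the origin are straight rays, and then assert this ``derives a contradiction'' without saying how. That observation alone does not rule out $\tilde c$; a contractible ball can certainly contain non-trivial geodesic loops based at interior points. The argument in \cite{Xu2018} closes this by working on the larger pullback ball $B(0,\conj(p))$ and exploiting that the injectivity radius at the origin there is the full ball radius (so strictly larger than $\inj(p)$), combined with a careful analysis of where the cut point of $\tilde q$ can lie relative to the conjugate radius. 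You would need to supply that step.

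Your argument for \eqref{2.4} is the standard Hessian-comparison route and is essentially correct. Two small corrections: when invoking \eqref{2.3} you should write $\inj(q)\ge\min\{\inj(p),\conj(q)\}-d(p,q)$ and then note $\conj(q)\ge\pi/\sqrt{\d}\ge 2R$ by Rauch comparison, rather than dropping the $\conj(q)$ term; and the domain on which $\Hess f>0$ is $B(q,\pi/(2\sqrt{\d}))$, not $B(q,2R)$, since $c_\d(r_q)>0$ only for $r_q<\pi/(2\sqrt{\d})$. Fortunately $B(q,s)\subset B(p,R)$ forces $s\le R-d(p,q)<R\le\pi/(2\sqrt{\d})$, so the relevant sub-balls do lie in the region of strict convexity and your continuity-in-endpoints argument goes through.
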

Lemma \ref{theo-2.2} was first proved by Mei \cite{Mei2016}, where $\conj(q)$ was replaced by its lower bound $\frac{\pi}{\sqrt{\d}}$. Later (\ref{2.3}) and a curvature-free version of (\ref{2.4}) was proved by the second author, where $\frac{\pi}{\sqrt{\d}}$ is replaced by the focal radius; see \cite{Xu2018}.

\subsection{Sobolev inequality}\label{sec:2.2}
The well-known Sobolev inequality for Riemannian submanifolds due to Hoffman and Spruck \cite{Hoffman-spruck1974} is a fundamental tool applied in the proof of Main Theorem.
\begin{theo}[\cite{Hoffman-spruck1974}]\label{theo-2.1}
	Let $N$ be a complete Riemannian manifold with $K_{N}\leq \d$. Let $M$ be a compact immersed submanifold in $N$. Let $f\in C^1(M)$ be nonnegative. For $\d>0$, in addition we assume that the volume of $M$ has an upper bound,
	\begin{equation}\label{vol-inj-require}
		|M|<\omega_n s_\d^n\(\frac{\min_{p\in M}\inj(p)}{2}\).
	\end{equation}
	Then there exists a positive constant $C=C(n)$ such that
	\begin{align}\label{Hoffman-Spruck}
	\(\int_{M} f^\frac{n}{n-1} \)^\frac{n-1}{n}\leq C\int_{M}(|\nabla f|+f|H|).
	\end{align}
\end{theo}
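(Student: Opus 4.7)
The plan is to follow the classical Michael--Simon strategy, adapted to a curved ambient via Hessian comparison, as in Hoffman--Spruck. The proof splits into three steps: an extrinsic volume monotonicity on $M$, an isoperimetric inequality on $M$, and a coarea reduction.

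First, for the monotonicity I would fix $x_0\in M$ and set $r(x)=d_N(x_0,x)$. Since $K_N\le\d$, Hessian comparison gives $\Hess r\ge \tfrac{c_\d(r)}{s_\d(r)}\,(g_N-dr\otimes dr)$ throughout the convexity ball at $x_0$. Tracing along $M$ and using the submanifold identity $\Delta^M r=\tr_M\Hess r-\langle H,\nabla r\rangle$ yields
\[
\Delta^M r\;\ge\;\frac{c_\d(r)}{s_\d(r)}\bigl(n-|\nabla^M r|^2\bigr)-\langle H,\nabla r\rangle.
\]
Plugging appropriate cutoff multiples of $r$ into the divergence theorem on the extrinsic ball $M\cap B_N(x_0,\rho)$ gives, after division by $s_\d(\rho)^n$, a differential inequality for $v(\rho):=|M\cap B_N(x_0,\rho)|$ that is monotone in $\rho$ modulo a $|H|$-correction. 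The volume hypothesis (\ref{vol-inj-require}) enters precisely here when $\d>0$: it prevents far-away sheets of $M$ from re-entering $B_N(x_0,\rho)$ past the conjugate locus, so the integration domain lies in a single convex chart where the Hessian comparison and integration-by-parts are valid.

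The second step is the isoperimetric-type inequality
\[
|\Omega|^{(n-1)/n}\;\le\;C(n)\Bigl(P_M(\Omega)+\int_\Omega |H|\Bigr)
\]
for every open $\Omega\subset M$. This is extracted from the monotonicity by a standard density/covering argument: for each $x\in\Omega$, pick the first radius $\rho_x$ at which $v_x(\rho)$ attains a prescribed fraction of $|\Omega|$, extract a disjoint subfamily of the corresponding extrinsic balls via a Besicovitch selection, and assemble the local inequalities produced by the monotonicity on each ball.

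Finally, combining the layer-cake identity $\int_M f^{n/(n-1)}=\tfrac{n}{n-1}\int_0^\infty t^{1/(n-1)}|\{f>t\}|\,dt$ with the isoperimetric inequality applied at each level $\{f>t\}$ and with the coarea identity $\int_0^\infty P_M(\{f>t\})\,dt=\int_M|\nabla f|$, a standard Hardy--Littlewood type interpolation yields (\ref{Hoffman-Spruck}). The principal obstacle is the monotonicity step when $\d>0$: turning the pointwise Hessian comparison into a genuine monotonicity for $v(\rho)/s_\d(\rho)^n$ requires that all sheets of $M$ meeting $B_N(x_0,\rho)$ lie in a single convex neighborhood of $x_0$, and this is exactly what (\ref{vol-inj-require}) secures.
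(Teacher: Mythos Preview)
The paper does not give its own proof of this theorem: it is quoted directly from \cite{Hoffman-spruck1974} as a known result, and the only thing the paper does afterwards is check that the volume--injectivity hypothesis \eqref{vol-inj-require} is satisfied in the situation of the Main Theorem (by lifting $M$ to $T_pN$ and invoking Lemma~\ref{theo-2.2}). So there is nothing to compare against.

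Your outline is essentially the original Michael--Simon/Hoffman--Spruck scheme and is a reasonable sketch. One correction worth noting: your explanation of why \eqref{vol-inj-require} is needed is slightly off. The issue is not that distant sheets of $M$ ``re-enter'' the ball past the conjugate locus; rather, in the Hoffman--Spruck covering/stopping-time argument one needs, for each $x_0\in M$, a radius $\rho_{x_0}$ at which the extrinsic volume $|M\cap B_N(x_0,\rho)|$ first reaches a fixed fraction of $|\Omega|$, and one must guarantee $\rho_{x_0}$ stays below the injectivity radius so that the Hessian comparison and the divergence theorem remain valid on that ball. The volume bound \eqref{vol-inj-require} is exactly what forces this stopping radius to be small enough. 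Apart from this point, your three steps (monotonicity, isoperimetric inequality via covering, then coarea/layer-cake) match the standard argument.
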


We now verify that Theorem \ref{theo-2.1} is applicable for hypersurface $M$ in Main Theorem by
localizing injectivity radius along whole $M$ to one point.

Let $M^n$ be a compact hypersurface immersed into a geodesic contractible ball $B(p,R)$, where sectional curvature of ambient space $K_N\le \d$ and $R\le\frac{\pi}{2\sqrt{\d}}$.
In order to apply Theorem \ref{theo-2.1} for $\d>0$, we need to justify (\ref{vol-inj-require}) under the condition of Main Theorem.

Let us consider the ball $B(o,\frac{\pi}{\sqrt{\d}})$ in the tangent space $T_pN$ with the pullback metric by $\exp_p$. Since $\inj(o)>\frac{\pi}{\sqrt{\d}}-\epsilon$ for any $\epsilon>0$, it is easy to see by Lemma \ref{theo-2.2} that, for any $q\in B(o,\frac{\pi}{2\sqrt{\d}})$, $\inj(q)\ge \frac{\pi}{2\sqrt{\d}}$.
	
On the other hand, since $B(p,R)$ is geodesic contractible, $M$ can be lifted by the inverse of $\exp_p|_{B(p,\inj(p))}$ into $B(o,\frac{\pi}{\sqrt{\d}})$. Then by \eqref{2.3}, for any point $q$ of the image of $M$ in $B(o,\frac{\pi}{\sqrt{\d}})$, $\inj(q)> \frac{\pi}{2\sqrt{\d}}$.

Therefore, if $M$ lies in a geodesic contractible ball $B(p,R)$, then (\ref{vol-inj-require}) can be replaced by
\begin{equation}
R\le\frac{\pi}{2\sqrt{\d}}, \qquad |M|\le \omega_n s_\d^n(\frac{\pi}{4\sqrt{\d}}).
\end{equation}
So (\ref{location-size}) implies (\ref{vol-inj-require}).

\subsection{Convergence theorems in Gromov-Hausdorff topology}\label{sec:2.3}
We recall convergence results for Riemannian manifolds under Gromov-Hausdorff topology.

We say that a sequence $(X_i,d_i)$ of metric spaces {\em $GH$-converges} to $(X,d)$ in Gromov-Hausdorff topology, denoted by $(X_i,d_i)\overset{\operatorname{GH}}{\longrightarrow}(X,d)$, if there is $\epsilon_i$-isometries $\psi_i:X_i\to X$ with $\epsilon_i\to 0$, i.e., for any $x,y\in X_i$, $|d_i(x,y)-d(\psi_i(x),\psi_i(y))|\le\epsilon_i$, and $\epsilon_i$-neighborhood of $\psi_i(X_i)$ covers $X$.

Gromov's compactness theorem (cf. \cite{Burago-Burago-Ivanov2001}) says that for any collection $\{(X_\alpha,d_\alpha)\}$ of compact metric spaces of bounded diameter, if they are uniformly and totally bounded (i.e., there is a nonnegative function $\tau$ such that for each $\alpha$, any maximal $\epsilon$-discrete net of $X_\alpha$ contains points at most $\tau(\epsilon)$), then $\{(X_\alpha,d_\alpha)\}$ is precompact (i.e,. has a compact closure) in the Gromov-Hausdorff topology. Such precompactness can be guaranteed by the relative volume comparison theorem under lower bounded Ricci curvature.

For $n$-manifolds with uniformly bounded (sectional or Ricci) curvature and under certain appropriate non-collapsing assumptions, $GH$-convergence implies higher regularity of metric tensors.

\begin{theo}[Cheeger-Gromov's convergence, \cite{Cheegerphd,Cheeger1970}, \cite{GLP1981}, \cite{Kasue1989}, \cite{Green-Wu1988}, \cite{Peters1987}]\label{thm-ChGro-Convergence}
	Let $(M_i,g_i)$ be a sequence of Riemannian $n$-manifolds whose sectional curvature $|K_{(M_i,g_i)}|\le 1$, diameter $\le d$ and injectivity radius $\inj(M,g_i)\ge \rho$. Then there is a subsequence $(M_{i_1},g_{i_1})$ whose $\operatorname{GH}$-limit is isometric to a $C^{1,\alpha}$-Riemannian manifold $(M,g)$, and there are diffeomorphisms $f_{i_1}:M\to M_{i_1}$ for all sufficient large $i_1$ such that the pullback metric $f_{i_1}^*g_{i_1}$ converges to $g$ in the $C^{1,\alpha}$ topology for any $\alpha\in [0,1)$ (i.e., there is a fixed coordinates system such that $f_{i_1}^*g_{i_1}$ converges to $g$ on each chart in the $C^{1,\alpha}$-norm).
\end{theo}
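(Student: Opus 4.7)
The plan is to prove $C^{1,\alpha}$ convergence via the method of \emph{harmonic coordinates}, following the Jost--Karcher / Anderson / Peters scheme. First I would establish a uniform positive lower bound on the \emph{harmonic radius} $r_h>0$ at every point of every $M_i$. The bounds $|K_{(M_i,g_i)}|\le 1$ and $\inj(M_i,g_i)\ge \rho$ give, via Rauch/Jacobi field comparison, uniform bi-Lipschitz control of $\exp_p$ on the $\rho$-ball in $T_pM_i$. Solving the Dirichlet problem $\Delta_g u^k=0$ with boundary data given by normal coordinates on a small enough ball produces harmonic coordinates in which the metric is uniformly $C^0$-close to $\mathrm{Id}$ with uniformly bounded gradient. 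The key identity
\begin{equation*}
\Delta_g g_{ij} \;=\; -2\Ric_{ij} \;+\; Q(g,\partial g),
\end{equation*}
with $Q$ quadratic in $\partial g$ and rational in $g$, together with $|\Ric|\le n$, then gives uniform $W^{2,p}$ bounds on $g_{ij}$ for every $p\in (1,\infty)$ on a slightly smaller concentric ball. By Sobolev embedding this yields uniform $C^{1,\alpha}$ bounds on the metric components for every $\alpha\in [0,1)$.

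Second, I would invoke Gromov's precompactness. The injectivity radius and sectional curvature bounds give a uniform lower bound on $\Vol(B(p,r))$ for $r\le \rho/2$, so combined with $\Diam\le d$ the family $\{(M_i,g_i)\}$ is uniformly totally bounded, hence a subsequence $(M_{i_1},g_{i_1})$ converges in Gromov--Hausdorff topology to a compact metric space $(M,d)$. Then I would promote this to smooth convergence: cover each $M_{i_1}$ by a bounded number of harmonic coordinate balls $U^a_{i_1}$ of radius $r_h/2$ (the number is bounded uniformly in $i_1$ by the packing estimate), and via a diagonal Arzel\`a--Ascoli argument applied to the uniformly $C^{1,\alpha}$-bounded metric tensors (and uniformly $C^{2,\alpha}$-bounded transition maps, since transition between two harmonic charts is itself harmonic), extract charts $U^a$ with limit metrics $g^a$ on $M$. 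Uniform ellipticity of the limit tensors and compatibility of the limit transitions endow $M$ with the structure of a smooth manifold carrying a $C^{1,\alpha}$ Riemannian metric $g$.

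Third, to obtain the diffeomorphisms $f_{i_1}:M\to M_{i_1}$ with $f_{i_1}^*g_{i_1}\to g$ in $C^{1,\alpha}$, I would glue the chart-wise convergences. A standard construction uses a smooth partition of unity $\{\chi^a\}$ on $M$ subordinate to the limit atlas together with a center-of-mass averaging in $M_{i_1}$ (which is well-defined because, for $i_1$ large, the relevant neighborhoods lie in balls below the convexity radius guaranteed by Lemma \ref{theo-2.2}). The uniform $C^{2,\alpha}$ bounds on transitions make the glued map a well-defined diffeomorphism onto its image for large $i_1$, and the $C^{1,\alpha}$ convergence of metrics persists under such averaging.

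The main obstacle is the uniform lower bound on the harmonic radius, which is the technical heart of the theorem: it requires controlling the Dirichlet problem on small balls with constants depending only on $n$, the curvature bound, and $\rho$, and its failure is exactly what separates the $C^{1,\alpha}$ statement from the weaker $C^{0,\alpha}$ convergence available from Gromov's argument. A secondary difficulty is the center-of-mass gluing, where one must verify simultaneously that the glued map remains a diffeomorphism and that differentiating the pullback metric does not destroy the chart-wise $C^{1,\alpha}$ convergence; both are guaranteed by keeping the averaging scale well below the injectivity and convexity radii.
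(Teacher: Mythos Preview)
The paper does not prove this theorem at all: it is stated in \S\ref{sec:2.3} purely as a cited background result from \cite{Cheegerphd,Cheeger1970,GLP1981,Kasue1989,Green-Wu1988,Peters1987}, and is later invoked as a black box in the proof of (M2). So there is no ``paper's own proof'' to compare against.

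That said, your outline is the standard modern route to the result, essentially the harmonic-coordinates scheme of Jost--Karcher, Peters \cite{Peters1987}, and Anderson \cite{Anderson1990}: uniform harmonic radius from the curvature and injectivity radius bounds, elliptic estimates for $g_{ij}$ in harmonic charts giving uniform $C^{1,\alpha}$ control, Gromov precompactness plus Arzel\`a--Ascoli on a controlled atlas, and a center-of-mass gluing for the diffeomorphisms. This matches what is actually done in the cited references, so your proposal is appropriate and correct in spirit. The only caveat is that your sketch leaves the harmonic-radius lower bound and the regularity of transition maps at the level of assertion; these are exactly the places where the real work lies, and a full write-up would need to supply the Jost--Karcher a priori estimates (or cite them precisely) rather than just name them.
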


A compact Riemannian $n$-manifold $(M,g)$ is said to be of {\em $(H,L^p)$-bounded Ricci curvature}, if for real numbers $p>n/2$ and $H\ge 1$,
$$\int_M |\operatorname{Ric}|^p\le H.$$
A manifold $(M,g)$ is called to be  \emph{$\varkappa$-non-collapsing with $\varkappa=1-\eta$ at scale $r_0$}, if
$$\operatorname{vol}(B(x, r))\ge (1-\eta)\operatorname{vol}(B(o,r))\qquad  \text{for all $x\in M$ and $r \le r_0$},$$
where $B(o,r)$ is an Euclidean ball of radius $r$.

By \cite{Petersen-Wei1997}, the relative volume comparison of balls holds on manifolds of $L^p$-bounded Ricci curvature. Since in harmonic coordinates, the $L^p$-bound of the Ricci curvature gives the $W^{2,P}$-bound of the metric tensor $g_{ij}$ (cf. \cite{Anderson1990}), by the $C^{\alpha}$-harmonic radius estimate \cite[Theorem 2.35]{Tian-Zhang2016},  the following $C^{\alpha}$-regularity convergence result holds.
\begin{theo}[\cite{Tian-Zhang2016},\cite{Petersen-Wei1997}]\label{thm-Calpha-convergence}
	For any $0<\alpha<1$, there is $\eta>0$ such that for any sequence $(M_i,g_i)$ of Riemannian $n$-manifolds of
	$(H,L^p)$-bounded Ricci curvature, $(1-\eta)$-non-collapsing volume at scale $r_0$ and uniformly bounded diameter, there is a subsequence $(M_{i_1},g_{i_1})$ whose $GH$-limit is isometric to a $C^{\alpha}$-Riemannian manifold $(M,g)$, and there are diffeomorphisms $f_{i_1}:M\to M_{i_1}$ for all sufficient large $i_1$ such that the pullback metric $f_{i_1}^*g_{i_1}$ converges to $g$ in the $C^{\alpha}$ topology.
\end{theo}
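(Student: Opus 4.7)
The plan is to adapt the proof of Cheeger-Gromov's convergence theorem (Theorem \ref{thm-ChGro-Convergence}) by replacing pointwise sectional curvature bounds with integral Ricci bounds. Two ingredients carry the argument: a generalized Bishop-Gromov relative volume comparison under $L^p$ Ricci bounds (Petersen-Wei), which delivers Gromov-Hausdorff precompactness; and a lower bound on the $C^\alpha$-harmonic radius under $L^p$ Ricci bounds together with $\varkappa$-non-collapsing (Tian-Zhang), which delivers regularity of the limit.

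First I would invoke Petersen-Wei's relative volume comparison for manifolds with $(H,L^p)$-bounded Ricci curvature to bound the cardinality of a maximal $r$-discrete net in each $(M_i,g_i)$ in terms of $n$, $p$, $H$, the diameter, and the non-collapsing scale $r_0$. This uniform total boundedness, together with the uniform diameter bound, allows Gromov's compactness theorem to produce a subsequence converging in Gromov-Hausdorff topology to a compact metric space $(M,d_\infty)$.

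Next I would apply the Tian-Zhang $C^\alpha$-harmonic radius estimate: there exist constants $\eta=\eta(n,p,H,\alpha,r_0)>0$ and $r_h=r_h(n,p,H,\alpha,r_0)>0$ such that whenever the non-collapsing parameter is at least $1-\eta$, every point of every $M_i$ admits harmonic coordinates on a ball of radius $r_h$ in which the metric components $(g_i)_{jk}$ are uniformly bounded in $C^\alpha$. Choosing a fine net in $(M,d_\infty)$ and lifting it to each $M_i$ via the $\epsilon_i$-approximation produces a uniformly finite harmonic coordinate atlas on each $M_i$ whose transition functions are themselves uniformly controlled by the $C^\alpha$ metric bound.

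Finally I would run an Arzel\`a-Ascoli argument in each chart to extract, along a diagonal subsequence, a limit $(g_\infty)_{jk}$ lying in $C^\alpha$ and to which convergence holds in the $C^\alpha$ topology (understood in the standard sense of $C^{\alpha'}$-convergence for every $\alpha'<\alpha$ paired with a uniform $C^\alpha$ bound on the limit). Limits of the transition maps then glue these charts into a single $C^\alpha$-Riemannian manifold $(M,g)$, and the diffeomorphisms $f_{i_1}:M\to M_{i_1}$ are produced by matching coordinate centers on the atlas. The main obstacle is the harmonic radius estimate itself: with only an $L^p$ Ricci bound one has no pointwise curvature comparison, and the harmonic coordinate equation $\Delta_g x^k=0$ must be analyzed via elliptic $W^{2,p}$ theory, which outputs only $C^\alpha$ --- rather than $C^{1,\alpha}$ --- control on the metric tensor; this is precisely why the regularity of the limit drops from that in the Cheeger-Gromov conclusion.
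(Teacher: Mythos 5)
Your proposal follows the same route that the paper itself sketches in the paragraph preceding the theorem: Petersen--Wei relative volume comparison under $L^p$ Ricci bounds gives total boundedness and hence Gromov--Hausdorff precompactness, while the Tian--Zhang $C^\alpha$-harmonic radius estimate (powered by Anderson's observation that in harmonic coordinates an $L^p$ Ricci bound yields $W^{2,p}$ control on the metric components) gives a uniform atlas in which Arzel\`a--Ascoli can be run. The paper does not write out the atlas-gluing and diagonal-extraction step, but what you describe is the standard Cheeger--Gromov compactness mechanism adapted to the lower regularity, and your explanation of why the conclusion degrades to $C^\alpha$ (namely, $W^{2,p}$ with only $p>n/2$ embeds into $C^\alpha$ but not $C^{1,\alpha}$) is the correct reason. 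The proposal is correct and essentially the paper's intended argument.
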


\subsection{Center of mass}\label{sec:2.4}
Let $M$ be an immersed submanifold in a geodesic contractible ball $B(p,R)\subset N$. If in addition $K_N\le \d$ with $\d>0$, then we assume $R\le \frac{\pi}{4\sqrt{\d}}$. By lifting $M$ to $T_pM$ as the same argument below Theorem \ref{theo-2.1} and by \eqref{2.4}, we assume without loss of generality that $B(p,2R)$ is convex.

Let $\mathcal{F}: B(p,2R)\ra \R$ be an energy function defined by
\begin{align*}
\mathcal{F}(q):=\int_{M} \Phi_\d(\mrm{dist}(q,x)) dx,
\end{align*}
where $\Phi_\d$ is the modified distance function  defined by
\begin{equation}\label{2.2}
\Phi_\d(r):=\int_0^rs_\d(s)ds.
\end{equation}
We claim that there is a unique minimum point $p_0\in B(p,R)$ of $\mathcal{F}$ in $B(p,2R)$. We call $p_0\in N$ the {\em center of mass} of $M$ with respect to modified distance.

First, by $R\le \frac{\pi}{4\sqrt{\d}}$ and \eqref{2.4}, every $\Phi_\d(\operatorname{dist}(x,\cdot))$ is convex. Hence $\mathcal{F}$ is a strictly convex function on $B(p,2R)$, which admits a unique minimum point $p_0\in B(p,2R)$ such that $\nabla^N \mathcal F(p_0)=0$. Note that it is equivalent to $Y(p_0)=0$, where $Y$ is a vector field defined by
\begin{align}\label{2.6}
Y(q):=\int_M \frac{s_\d(r)}{r}\exp_q^{-1}(x)\in T_qN,
\end{align}
where $r(x)=\mrm{dist}(x,q)$.

Secondly, because the vector field $Y$ defined above, by the convexity of $B(p,R)$, points into interior of $B(p,R)$ along the boundary. It follows that the minimum point $p_0$ of $\mathcal{F}$ lies in $B(p,R)$.

By definition, it is clear that in the normal coordinates $\{x_1,\cdots,x_{n+1}\}$ of $p_0$, \eqref{2.6} becomes
\begin{align}\label{center-mass-test-func}
\int_M \frac{s_\d(r)}{r}x_i=0, \qquad i=1,\dots,n+1,
\end{align}
where $r(x)=\mrm{dist}(x,p_0)$.

By the discussion above, the hypersurface $M$ in Main Theorem, Theorems \ref{main-theo-A} and \ref{main-theo-B} always admits a center of mass in $B(p,R)$.

\subsection{Test functions for 1st eigenvalue of Laplace-Beltrami operator}
Now let $M\subset B(p,R)\subset N$ be an immersed oriented hypersurface with $K_N\le \d$. If $\d>0$, we further assume that $R\le \frac{\pi}{4\sqrt{\d}}$.

Let $p_0$ be the center of mass of $M$ and $r(x)=d(x,p_0)$. We call the vector field $X=s_\d(r)\nabla^N r$ the {\em position vector} about $p_0$. By Rayleigh's principle and (\ref{center-mass-test-func}), each component of the position vector $X$ provides a test function of $\lambda_1$, such that
\begin{align}
\l_1 \int_M |X|^2=\l_1 \int_M \sum_{i=1}^{n+1}\left(\frac{s_\d(r)}{r} x_i\right)^2 \leq & \int_M \sum_{i=1}^{n+1}\left|\nabla^M \frac{s_\d(r)}{r} x_i\right|^2
\end{align}

In order to estimate $\nabla^M \frac{s_\d(r)}{r} x_i$, we need the following lemmas from \cite{Heintze1988}. Let $X^\top$ be $X$'s tangential projection over $M$, then
$$
X^\top_x=s_\d(r(x))\nabla^M r|_x, \quad x\in M.
$$
For any vector field $Y$ on $N$, the {\em divergence} of $Y$ along $M$ is defined by
$$
\divv_M Y(p):=\sum_{i=1}^{n}\langle \nabla^N_{e_i}Y,e_i\rangle,
$$
where $\{e_1,\cdots,e_n \}$ is an orthonormal basis of $T_p M$.

\begin{lem}[\cite{Heintze1988}]\label{lem-heintze}
	Let the assumptions be as above. The following inequalities hold.
	
	(i) Let $\nu$ be the normal vector of $M$ from its orientation, then
	\begin{align}\label{3.1}
	\divv_M X^\top\geq nc_\d-nH\langle X,\nu \rangle,
	\end{align}

	(ii) The covariant derivatives of components of $X$, $s_\d (r)x_i/r$ satisfies
	\begin{align}\label{3.2}
	\sum_{i=1}^{n+1}\left|\nabla^M \(\frac{s_\d(r)}{r}x_i\) \right|^2+ \d|X^\top|^2 \leq n,
	\end{align}
	
\end{lem}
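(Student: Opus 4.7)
The plan is to derive both inequalities from two comparison principles under $K_N\le\d$: the Hessian bound $\nabla^2 r\ge (c_\d/s_\d)(g - dr\otimes dr)$ for $r=d(p_0,\cdot)$, and Rauch's Jacobi field comparison along radial geodesics from $p_0$. Part (i) is then a divergence computation on $M$; part (ii) is a pointwise bound for the ambient gradients of the test functions $\phi_i = (s_\d(r)/r) x_i$ summed over $i$.

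For (i), I would choose an orthonormal frame $\{e_1,\ldots,e_n\}$ of $T_pM$ and decompose $X=X^\top+\langle X,\nu\rangle\nu$. Direct expansion, using $e_i\perp\nu$, gives
\[
\divv_M X^\top = \sum_i\langle\nabla^N_{e_i}X,e_i\rangle - \langle X,\nu\rangle\sum_i\langle\nabla^N_{e_i}\nu,e_i\rangle.
\]
The second sum equals $-nH$ by definition of mean curvature with the chosen orientation. Writing $X=s_\d(r)\partial_r$, the Hessian comparison yields $\langle\nabla^N_Y X,Y\rangle\ge c_\d(r)|Y|^2$ for every $Y$, so $\sum_i\langle\nabla^N_{e_i}X,e_i\rangle\ge nc_\d(r)$, proving \eqref{3.1}.

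For (ii), let $\hat E_1,\ldots,\hat E_{n+1}$ be the orthonormal frame obtained by parallel-transporting $\partial/\partial x_i|_{p_0}$ along each radial geodesic from $p_0$, and set $\omega_i=x_i/r$, so that $\phi_i = s_\d(r)\omega_i = \langle X,\hat E_i\rangle$. For any tangent vector $Y$ at $q\ne p_0$,
\[
Y(\phi_i) = c_\d(r)\,Y(r)\,\omega_i + s_\d(r)\,Y(\omega_i),
\]
and since $\sum_i\omega_i^2=1$ forces $\sum_i\omega_i Y(\omega_i)=0$, squaring and summing yields
\[
\sum_i Y(\phi_i)^2 = c_\d^2\,Y(r)^2 + s_\d^2\sum_i Y(\omega_i)^2.
\]
The crucial pointwise estimate is $\sum_i Y(\omega_i)^2\le |Y^\perp|^2/s_\d(r)^2$, where $Y^\perp = Y - Y(r)\partial_r$: writing a curve $q(\tau) = \exp_{p_0}(u(\tau))$ with $\dot q(0) = Y$, one identifies $\sum_i Y(\omega_i)^2$ with $|J'(0)^\perp|^2$, where $J$ is the Jacobi field along the radial geodesic from $p_0$ to $q$ with $J(0)=0$, $J(r)=Y$, and Rauch's comparison under $K_N\le\d$ gives $|J^\perp(r)|\ge s_\d(r)\,|J'(0)^\perp|$, hence $|J'(0)^\perp|\le|Y^\perp|/s_\d(r)$. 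Substituting and using $1-c_\d^2=\d s_\d^2$ together with $s_\d(r)Y(r)=\langle Y,X\rangle$ gives the pointwise bound
\[
\sum_i Y(\phi_i)^2 \le |Y|^2 - \d\,\langle Y,X\rangle^2.
\]
Applying this with $Y$ ranging over an orthonormal frame of $T_pM$ and summing delivers $\sum_i|\nabla^M\phi_i|^2 \le n - \d|X^\top|^2$, which is \eqref{3.2}.

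The main technical obstacle is the Jacobi-field estimate $\sum_i Y(\omega_i)^2 \le |Y^\perp|^2/s_\d(r)^2$: although essentially Rauch's theorem, its rigorous derivation requires interpreting $\dot u(0)/r$ as the initial velocity $J'(0)$ of the relevant Jacobi field and matching its perpendicular components in the normal-coordinate basis at $p_0$ with the quantities $Y(\omega_i)$. Once that identification is in place, the remainder of (ii) is algebraic and (i) follows cleanly from the Hessian comparison.
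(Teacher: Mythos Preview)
Your argument is correct and, for (i), follows the same Hessian-comparison route as the paper; the paper's only extra move is to choose an orthonormal frame adapted to $\nabla^M r$, which yields the sharper identity $\divv_M X = s_\d\Delta r + (c_\d - s_\d\operatorname{Hess}r(e_n^*,e_n^*))|\nabla^M r|^2$ reused later in Lemma~\ref{lem-refine-heintze}, whereas your direct bound $\langle\nabla^N_Y X,Y\rangle\ge c_\d|Y|^2$ is cleaner if only \eqref{3.1} is needed. One caveat: in the paper's convention $\sum_i\langle\nabla^N_{e_i}\nu,e_i\rangle = +nH$ (equivalently $\divv_M X^\bot = nH\langle X,\nu\rangle$), so your stated sign $-nH$ would flip the last term of \eqref{3.1}; this is purely a convention mismatch. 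For (ii) the paper gives no proof and simply cites \cite{Heintze1988}; your Rauch-comparison sketch---identifying $\sum_i Y(\omega_i)^2$ with $|J'(0)|^2$ for the normal Jacobi field with $J(r)=Y^\perp$ and invoking $|J(r)|\ge s_\d(r)|J'(0)|$---is exactly Heintze's original argument.
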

\begin{proof}
	Since the proof of (i) is used in the proof of Main Theorem, we give a proof. For (ii) we refer to \cite{Heintze1988}.
	
	Let $p\in M$ and $\{e_1,\cdots,e_n\}$ be an orthonormal basis of $T_p M$. If $\nabla^M r=0$ at $p$, then $e_j\bot \nabla^N r$ for all $j\in\{1,\cdots,n\}$; Otherwise, we take $e_n=\frac{\nabla^M r}{|\nabla^M r|}$ and $e_j\bot \nabla^N r$ for $j\in \{1,\cdots,n-1\}$. Then we get
	$$
	e_n= |\nabla^M r|\nabla^N r+\sqrt{1-|\nabla^M r|^2}e_n^\ast,
	$$
	where $e_n^\ast$ is a unit vector such that $e_n^\ast \bot \nabla^N r$. By Hessian comparison theorem for $K_N\leq \d$, we get
	\begin{equation*}
	\begin{split}
	\divv_M \nabla^N r(p)=& \sum_{j=1}^{n}\langle \nabla^N_{e_j} \nabla^N r,e_j \rangle\\
	=&\sum_{j=1}^{n-1}\langle \nabla^N_{e_j} \nabla^N r,e_j \rangle+(1-|\nabla^M r|^2)\langle \nabla^N_{e_n^\ast} \nabla^N r,e_n^\ast \rangle \\
	\geq & (n-|\nabla^M r|^2)\frac{c_\d}{s_\d},
	\end{split}
	\end{equation*}
	where we used the fact that $\Hess^N r(\nabla^N r,\cdot)=0$ and the standard Jacobi field estimates. Thus we have
	\begin{align*}
	\divv_M X=s_\d \divv_M \nabla^N r+c_\d |\nabla^M r|^2\geq n c_\d.
	\end{align*}
	By the identity
	\begin{align*}
	\divv_M X^\bot=\sum_{i=1}^n \langle \nabla^N_{e_i} X^\bot,e_i \rangle=\sum_{i=1}^{n} e_i\langle X^\bot,e_i \rangle-\sum_{i=1}^n\langle X^\bot,\nabla^N_{e_i}e_i\rangle=\langle X^\bot,nH\nu\rangle=nH\langle X,\nu\rangle,
	\end{align*}
	we have
	$$
	\divv_M X^\top= \divv_M X-\divv_M X^\bot=\divv_M X-n\langle X,H\nu\rangle\geq n c_\d-nH\langle X,\nu\rangle.
	$$
\end{proof}

\subsection{Proof of Theorem \ref{main-theo-A}}

Theorem \ref{main-theo-A} is an observation based on Heintze \cite{Heintze1988}. As one of the preliminaries, we give a direct proof. The following inequalities are used.
\begin{lem}
	Integral of (\ref{3.1}) gives
	\begin{align}\label{3.4}
	\int_M c_\d \leq \int_M |H| s_\d.
	\end{align}
	Let $c=\frac{1}{|M|}\int_M c_\d$. Then by (\ref{3.4}) and Cauchy-Schwarz inequality
	\begin{equation}\label{thma-testfunc-2}
	(1-c^2)\(n\d+\frac{n}{|M|}\int_M |H|^2\)\ge n\delta,
	\end{equation}
	By (\ref{3.1}) and integrating $\divv_M (c_\d X^\top)=-\delta s_\d\langle  \nabla^M r,X^\top\rangle + c_\d \divv_M X^\top$, it gives
	\begin{align}\label{3.5}
	\d \int_M |X^\top|^2 \geq n\int_M c_\d^2-n\int_M |H| s_\d c_\d.
	\end{align}
	Furthermore, if $\d<0$ then by identity $\d s_\d^2+c_\d^2=1$ and Cauchy-Schwarz inequality,
	\begin{align}\label{3.6}
	\int_M s_\d \int_M s_\d c_\d \leq \int_M s_\d^2 \int_M c_\d.
	\end{align}
\end{lem}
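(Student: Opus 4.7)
The four inequalities are independent consequences of (\ref{3.1}), the divergence theorem on the closed hypersurface $M$, and elementary manipulations with the identities $c_\d'=-\d s_\d$ and $c_\d^2+\d s_\d^2=1$. I would treat them one at a time.

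\textbf{Inequality (\ref{3.4}).} Integrate (\ref{3.1}) over $M$. Since $M$ is closed, $\int_M\divv_M X^\top=0$ by the divergence theorem, so
\[
0\;\ge\;n\int_M c_\d \;-\;n\int_M H\langle X,\nu\rangle.
\]
Because $|\langle X,\nu\rangle|\le|X|=s_\d(r)$, one has $\int_M H\langle X,\nu\rangle\le\int_M|H|s_\d$, which yields (\ref{3.4}).

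\textbf{Inequality (\ref{thma-testfunc-2}).} Starting from (\ref{3.4}) and applying Cauchy--Schwarz gives
\[
\Bigl(\int_M c_\d\Bigr)^2\le\Bigl(\int_M|H|s_\d\Bigr)^2\le \int_M H^2\cdot\int_M s_\d^2.
\]
To control $\int_M s_\d^2$ I would use the identity $\d s_\d^2=1-c_\d^2$ together with Jensen's inequality applied to $c_\d$: $\int_M c_\d^2\ge\frac{1}{|M|}(\int_M c_\d)^2=c^2|M|$, hence $\d\int_M s_\d^2\le|M|(1-c^2)$. For $\d\ge 0$ this plugs in to give $c^2|M|\d\le(1-c^2)\int_M H^2$; rearranging produces exactly $(1-c^2)(n\d+\frac{n}{|M|}\int_M H^2)\ge n\d$ (and the case $\d=0$ is trivial since then $c_\d\equiv 1$, $c=1$, and both sides vanish).

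\textbf{Inequality (\ref{3.5}).} I would compute, using the product rule on $M$ and $\nabla^N c_\d=-\d s_\d\nabla^N r$,
\[
\divv_M(c_\d X^\top)=\langle\nabla^M c_\d,X^\top\rangle+c_\d\,\divv_M X^\top=-\d|X^\top|^2+c_\d\,\divv_M X^\top,
\]
where the first term uses $X^\top=s_\d\nabla^M r$ so that $\langle\nabla^M c_\d,X^\top\rangle=-\d s_\d^2|\nabla^M r|^2=-\d|X^\top|^2$. Multiplying (\ref{3.1}) by the nonnegative factor $c_\d$ (which is $\ge 0$ throughout the range of $r$ by the hypothesis $R\le\frac{\pi}{4\sqrt{\d}}$ when $\d>0$, and automatic when $\d\le 0$), integrating, and using $\int_M\divv_M(c_\d X^\top)=0$ yields
\[
\d\int_M|X^\top|^2\ge n\int_M c_\d^2-n\int_M c_\d H\langle X,\nu\rangle\ge n\int_M c_\d^2-n\int_M|H|s_\d c_\d,
\]
the last step again using $|\langle X,\nu\rangle|\le s_\d$.

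\textbf{Inequality (\ref{3.6}).} This is the only one requiring a Chebyshev-type symmetrization; it is the step I expect to be the main obstacle in writing the argument. Set
\[
A:=\int_M s_\d\int_M s_\d c_\d-\int_M s_\d^2\int_M c_\d=\iint_{M\times M}\bigl[s_\d(x)s_\d(y)c_\d(y)-s_\d(y)^2 c_\d(x)\bigr]\,dx\,dy,
\]
then swap $x\leftrightarrow y$ and average to get
\[
2A=\iint s_\d(y)\bigl[s_\d(x)c_\d(y)-s_\d(y)c_\d(x)\bigr]\,dxdy+\iint s_\d(x)\bigl[s_\d(y)c_\d(x)-s_\d(x)c_\d(y)\bigr]\,dxdy.
\]
For $\d<0$, the addition formula for $\sinh$ gives the closed expression $s_\d(x)c_\d(y)-s_\d(y)c_\d(x)=\frac{1}{\sqrt{-\d}}\sinh\bigl(\sqrt{-\d}(r(x)-r(y))\bigr)$, so
\[
2A=\iint\bigl(s_\d(y)-s_\d(x)\bigr)\cdot\tfrac{1}{\sqrt{-\d}}\sinh\bigl(\sqrt{-\d}(r(x)-r(y))\bigr)\,dx\,dy.
\]
Since both $s_\d$ and the hyperbolic sine are strictly increasing (for $\d<0$ and $r\ge 0$), the two factors in the integrand have opposite signs pointwise, so the integrand is $\le 0$. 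Hence $A\le 0$, which is (\ref{3.6}).
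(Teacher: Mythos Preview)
Your argument is correct and essentially matches the paper's. For (\ref{3.4}) and (\ref{3.5}) the paper simply says ``direct calculations,'' which is exactly what you carry out; for (\ref{thma-testfunc-2}) the paper expands $(1-c^2)(\d+\frac{1}{|M|}\int_M|H|^2)$ and applies (\ref{3.4}), Cauchy--Schwarz on $\int_M|H|s_\d$, and Jensen on $c_\d$ together with $\d s_\d^2+c_\d^2=1$ --- the same three ingredients you use, just combined in a slightly different order (your packaging via $\d\int_M s_\d^2\le|M|(1-c^2)$ is marginally cleaner). For (\ref{3.6}) the paper only cites \cite[Lemma~2.8]{Heintze1988}, whereas you supply the standard Chebyshev symmetrization that actually proves it; your computation of $2A=\iint(s_\d(y)-s_\d(x))\cdot\frac{1}{\sqrt{-\d}}\sinh\bigl(\sqrt{-\d}(r(x)-r(y))\bigr)\,dx\,dy\le 0$ is correct.
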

\begin{proof}
	(\ref{3.4}) and (\ref{3.5}) are by direct calculations. We refer to \cite[Lemma 2.8]{Heintze1988} for a proof of (\ref{3.6}). The verification of (\ref{thma-testfunc-2}) can be done by direct calculation as follows.
	Since $c_\d\ge 0$ along $M\subset B(p_0,2R)$ with $R\le\frac{\pi}{4\sqrt{\delta}}$,
	\begin{align*}
	&(1-c^2)\(\d+\frac{1}{|M|}\int_M |H|^2\) \\
	\overset{(\ref{3.4})}{\geq} & \d-\d \(\frac{1}{|M|}\int_M |H| s_\d\)^2 +\frac{1}{|M|}\int_M |H|^2-\frac{1}{|M|^2}\int_M |H|^2 \int_M c_\d^2 \\
	\geq  \;\;& \d-\frac{1}{|M|^2}\int_M |H|^2\int_M ( \d s_\d^2-1+c_\d^2)=\d.
	\end{align*}
\end{proof}

\begin{proof}[Proof of Theorem \ref{main-theo-A}]
	~
	
	Let $p_0$ be the center of mass of $M$ and $r(x)=d(x,p_0)$, and let $X=s_\d(r)\nabla^N r$ be the position vector with respect to $p_0$ and $X^\top$ be its tangential projection over $M$.
	We claim that
	
	\begin{claim}
		If (\ref{1steigeneq-1}) (resp. (\ref{1steigeneq-2})) holds for $\d\ge 0$ (resp. $\d<0$), then  $\nabla^M r=0$ and $|H|=\frac{c_\d}{s_\d}(r)$.
	\end{claim}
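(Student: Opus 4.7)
The plan is to chase equality through the chain of inequalities establishing Heintze's bound (\ref{1steigen-ineq}). Let $p_0$ be the center of mass and $f_i = s_\d(r)x_i/r$; these have zero mean on $M$ by (\ref{center-mass-test-func}). Rayleigh's principle summed over $i=1,\ldots,n+1$ combined with Lemma \ref{lem-heintze}(ii) gives
\[
\l_1 \int_M s_\d^2 \;\le\; n|M| - \d\int_M |X^\top|^2.
\]
Multiplying (\ref{3.1}) by $c_\d$ and integrating, using $\divv_M(c_\d X^\top) = -\d s_\d\langle\nabla^M r, X^\top\rangle + c_\d\divv_M X^\top$ and $|\langle X,\nu\rangle|\le s_\d$, reproduces (\ref{3.5}). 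Adding these and using $c_\d^2 + \d s_\d^2 = 1$ yields the key intermediate
\[
(\l_1 - n\d)\int_M s_\d^2 \;\le\; n\int_M |H| s_\d c_\d.
\]

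For case (i), I would combine the above with Cauchy--Schwarz and (\ref{thma-testfunc-2}) (itself derived from (\ref{3.4}) and a Cauchy--Schwarz on $c_\d$) to recover Heintze's bound; for case (ii) one replaces the estimate $c_\d\le 1$ by (\ref{3.6}) and uses $\max_M|H|$ in place of the average. Under the assumption (\ref{1steigeneq-1}) (respectively (\ref{1steigeneq-2})), every link in the chain --- Rayleigh, Lemma \ref{lem-heintze}(ii), (\ref{3.1}), $|\langle X,\nu\rangle|\le s_\d$, and each Cauchy--Schwarz application --- must be an equality.

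In case (i), equality in $(\int_M c_\d)^2 \le |M|\int_M c_\d^2$, which is the Cauchy--Schwarz step used to derive (\ref{thma-testfunc-2}), forces $c_\d$, and hence $r$, to be constant on the connected hypersurface $M$; in case (ii), the analogous equality in (\ref{3.6}) combined with equality in (\ref{3.4}) and $|H|\equiv \max_M|H|$ forces the same conclusion. So $r\equiv R_0$ on $M$ and $\nabla^M r \equiv 0$, whence $\nu = \pm\nabla^N r$ and $X^\top = 0$ pointwise. Substituting $\nabla^M r = 0$ and $\langle X,\nu\rangle = \pm s_\d(R_0)$ into the pointwise equality form of (\ref{3.1}), namely $\divv_M X^\top = nc_\d - nH\langle X,\nu\rangle$ with vanishing left-hand side, yields $|H| = c_\d/s_\d(r)$.

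The main obstacle is the bookkeeping through the several Cauchy--Schwarz steps in the Heintze derivation so that saturation precisely identifies which quantity must be constant on $M$: a less sharp split would still prove the Heintze inequality but would admit hypersurfaces with $|X^\top|\not\equiv 0$. The preparatory identities (\ref{3.4}), (\ref{3.6}), and (\ref{thma-testfunc-2}) are exactly what isolate the correct equality case, forcing $c_\d$ (and hence $r$) constant rather than some weaker proportionality.
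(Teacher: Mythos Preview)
Your overall strategy --- chase equality through the chain that proves Heintze's inequality --- is exactly the paper's approach, and for $\d<0$ your chain via (\ref{3.5}), (\ref{3.6}), (\ref{3.4}) coincides with the paper's Case 3. But the specific equality you single out to pin down the geometry in case (i) has a genuine gap at $\d=0$.

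You argue that equality in the Jensen step $(\int_M c_\d)^2\le |M|\int_M c_\d^2$ forces $c_\d$ constant, ``and hence $r$'' constant. For $\d=0$ one has $c_0\equiv 1$, so this Jensen step is an identity and yields nothing; likewise (\ref{thma-testfunc-2}) degenerates to $0\ge 0$. Thus your route to ``$r$ constant'' does not go through when $\d=0$. The paper avoids this by not aiming for $r$ constant first. In each case it traces equality back to (\ref{3.4}), $\int_M c_\d=\int_M |H|s_\d$, and then uses
\[
0=\int_M \divv_M X^\top \;\ge\; n\int_M c_\d - n\int_M H\langle X,\nu\rangle \;\ge\; n\int_M c_\d - n\int_M |H|s_\d \;=\; 0,
\]
so both inequalities are pointwise equalities: $H\langle X,\nu\rangle=|H|s_\d$ gives $|\langle X,\nu\rangle|=s_\d$, i.e.\ $\nabla^M r=0$, and equality in (\ref{3.1}) then yields $|H|=c_\d/s_\d$. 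This works uniformly in $\d$ and is the step you should substitute for your Jensen argument. A secondary point: for $\d>0$ your phrase ``combine with Cauchy--Schwarz and (\ref{thma-testfunc-2})'' skips the paper's use of the additional test function $(c_\d-c)/\sqrt{\d}$, which is what actually produces $\l_1(1-c^2)\le n\d$ and hence forces equality in (\ref{thma-testfunc-2}); with only the $f_i$'s your ``key intermediate'' does not close to Heintze's bound when $\d>0$.
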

	
	By the claim, the image of $M$ is a geodesic sphere centered at $p_0$, and the Laplacian of $\Delta r=|H|=\frac{c_\d}{s_\d}$. Since $K_N\le \d$, it implies
	\begin{align*}
	\mrm{Hess}^N r=\frac{c_\d(r)}{s_\d(r)}g_r, \quad \text{on $\Omega$}.
	\end{align*}
	By the rigidity of Hessian comparison, $\Omega$ must be a geodesic ball of constant curvature $\d$.
	
	If $n\ge2$, then by the simply connectedness of $S^n=\partial \Omega$, $M$ is embedded. For $n=1$, by the fact that $\lambda_1(M)=\frac{4\pi^2}{L^2(M)}$, where $L(M)$ is $M$'s length, and the 1st eigenvalue of its image $\partial \Omega$ satisfies (\ref{1steigeneq-1}) or (\ref{1steigeneq-2}), it is clear that $L(M)=L(\partial \Omega)$. Hence $M$ is also embedded.
	
	The claim can be directly seen from Heintze's proof \cite{Heintze1988}. For completeness, we give its verification below by dividing into three cases: $\d=0$, $\d>0$ and $\d<0$.
	
	Case 1 for $\d=0$. Let us take $x_i$, $i=1,\cdots,n+1$ as test functions in Rayleigh quotient. Since $p_0$ is the center of mass of $M$, we have $\int_{M}x_i=0$. By Rayleigh quotient,
	\begin{align}\label{thma-raylei-1}
	\l_1 \int_M |X|^2=\l_1 \int_M \sum_{i=1}^{n+1}x_i^2 \leq & \int_M \sum_{i=1}^{n+1}\left|\nabla^M x_i\right|^2
	\end{align}
	Since by (\ref{1steigeneq-1}), the LHS of (\ref{thma-raylei-1}) equals to $\frac{n}{|M|}\int_M |H|^2 \int_M |X|^2$. At the same time, by (\ref{3.2}) and (\ref{3.4}), the RHS satisfies
	\begin{align*}
	\int_M \sum_{i=1}^{n+1}\left|\nabla^M x_i\right|^2
	\overset{(\ref{3.2})}\leq  n|M| \overset{(\ref{3.4})}\leq \frac{n}{|M|}\(\int_M |H||X|\)^2
	\leq \frac{n}{|M|}\int_M |H|^2 \int_M |X|^2.
	\end{align*}
	Thus, all inequalities above becomes equality. In particular, by equality in (\ref{3.4}) and integrating (\ref{3.1}), we derive
	\begin{align}\label{thma-claim-ineq}
	0=\int_{M}\divv_M X^\top \geq n\int_{\Sigma} c_\d - n \int_M H\langle X,\nu \rangle\geq n \int_M c_\d-n\int_M |H|s_\d =0.
	\end{align}
	Therefore, $|\langle X,\nu \rangle|=s_\d$, $|H|=\Delta r\le \frac{c_\d}{s_\d}$. By (\ref{thma-claim-ineq}) again $|H|=\frac{c_\d}{s_\d}(r)$.
	
	Case 2 for $\d>0$. Let us take $\frac{s_\d(r)}{r} x_i$, $i=1,\cdots,n+1$ and $\frac{c_\d(r)-c}{\sqrt{\d}}$ as test functions, where $c=\frac{1}{|M|}\int_M c_\d$.
	Since $s_\d^2=|X|^2$ and $\nabla^M \(\frac{c_\d(r)-c}{\sqrt{\d}}\)=\sqrt{\d}X^\top$, we derive
	\begin{align}\label{thma-raylei-2}
	\l_1 \int_M \left[ s_\d^2+\frac{(c_\d-c)^2}{\d} \right] \leq \int_M \left[ \sum_{i=1}^{n+1}\left|\nabla^M \(\frac{s_\d(r)}{r}x_i\)\right|^2+ \d |X^\top|^2\right] \overset{(\ref{3.2})}\leq n|M|,
	\end{align}
	By definition of $c$ and direct calculation, the LHS of (\ref{thma-raylei-2}) equals to $\lambda_1(1-c^2)|M|/\d$.
	Hence we get $$\l_1(1-c^2)\leq n\d.$$
	Combining with (\ref{thma-testfunc-2}), we see that if (\ref{1steigeneq-1}) holds then equality in (\ref{thma-testfunc-2}) must also hold. By the proof of (\ref{thma-testfunc-2}), it in turn implies (\ref{3.4}) is an equality. Now by (\ref{thma-claim-ineq}) again, $\nabla^M r=0$ and $|H|=\frac{c_\d}{s_\d}$.
	
	Case 3 for $\d<0$. As the same for case 1, we take $\frac{s_\d(r)}{r} x_i$, $i=1,\cdots,n+1$ as test functions. Then by (\ref{1steigeneq-2}),
	\begin{align}\label{thma-raylei-3}
	n\d\int_M s_\d^2+n \cdot \max_M |H|^2  \int_M s_\d^2=\l_1 \int_M s_\d^2\leq &\int_M \sum_{i=1}^{n+1}\left|\nabla^M \(\frac{s_\d(r)}{r}x_i\)\right|^2.
	\end{align}
	At the same time we have
	\begin{align}\label{thma-cal-3}
	\text{RHS of (\ref{thma-raylei-3})}  &\overset{(\ref{3.2})}{\leq} \int_M \(n-\d|X^\top|^2\) \nonumber \\
	&\overset{(\ref{3.5})}{\leq} \int_M \(n-nc_\d^2+n |H|s_\d c_\d\)\nonumber \\
	& \leq n\d\int_M s_\d^2+n \cdot \max_M |H| \cdot \int_M s_\d c_\d \nonumber  \\
	& \overset{(\ref{3.6})}{\leq} n\d\int_M s_\d^2+n \cdot \max_M |H| \cdot \int_M s_\d^2 \cdot \frac{\int_M c_\d}{\int_M s_\d} \nonumber \\
	& \overset{(\ref{3.4})}{\leq} n\d\int_M s_\d^2+n \cdot \max_M |H|^2  \int_M s_\d^2.
	\end{align}
	Combining with (\ref{thma-raylei-3}) and (\ref{thma-cal-3}), we see that equality in (\ref{3.4}) holds. By considering (\ref{thma-claim-ineq}) again, the same argument as for case 1 implies the claim.
	
\end{proof}

\begin{rem}
	It should be pointed out that in Theorem \ref{main-theo-A} for $n=1$, Reilly (and also Heintze \cite{Heintze1988}) stated only that $M$ is immersed onto a circle; see \cite[Theorem A]{Reilly1977}. We observe that $M$ is also embedded.
	
	Indeed, for an immersed closed curve $\g:[0,L]\ra \R^2$ of length $L$, $\l_1(\g)=\frac{4\pi^2}{L^2}$. If $\l_1(\g)=\frac{1}{L}\int_\g \k_g^2$, where $\k_g$ is the geodesic curvature of $\g$, then by \cite[Theorem A]{Reilly1977} $\g$ is immersed in some circle of $\R^2$. By rescaling if necessary, we may assume that the radius of the circle is $1$, then $\k_g\equiv 1$ on $\g$ and $L=2\pi$. Thus, $\g$ coincides with the unit circle in $\R^2$.
	
	Main Theorem should hold for $n=1$, i.e., curves in a Riemannian surface, which will be discussed elsewhere.
\end{rem}

\section{Proof of Main Theorem}\label{sec:3}

The proof of Main Theorem is divided into two parts.

Let $p_0$ be the center of mass of the immersed hypersurface $M$ in $N$, and let $R_0=s_\d^{-1}(1/\sqrt{\d+\|H\|_\infty^2})$. Note that by Heintze-Reilly's inequality (\ref{1steigen-ineq}), $\d+\|H\|_\infty^2$ is always positive.

In Part I, we prove the position vector has norm close to $s_\d(R_0)$.

\begin{theo}\label{main-theo-1}
	Under the assumptions of Main Theorem, there is a positive $\epsilon_1=\epsilon_1(A_1, R, \delta, n)$ such that  pinching condition (\ref{pinching-condition-1}) with $0\le \epsilon<\epsilon_1$ implies that
	\begin{equation}\label{main-theo-1.1}
	\left| |X|_x-\frac{1}{\sqrt{\d+\|H\|_\infty^2}}  \right| \leq \frac{C_1\e^\frac{1}{2(2n+1)}}{\sqrt{\d+\|H\|_\infty^2}}, \quad \text{for all $x\in M$},
	\end{equation}
	where $C_1=C_1(n,\d,R,A_1)$ is a constant.
\end{theo}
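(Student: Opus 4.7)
Let $p_0$ be the center of mass constructed in \S\ref{sec:2.4}, set $r=d(p_0,\cdot)$, $X=s_\d(r)\nabla^N r$, $H_\infty=\|H\|_\infty$, and $\alpha=1/(\d+H_\infty^2)$. Since $|X|=s_\d(r)$, the goal is to show $|s_\d(r)-\sqrt{\alpha}|\le C_1\sqrt{\alpha}\,\e^{1/(2(2n+1))}$ pointwise on $M$. I will follow the two-step strategy of Colbois--Grosjean: first derive an $L^2$-pinching on $s_\d^2(r)-\alpha$, then promote it to an $L^\infty$ bound via Moser iteration driven by the Hoffman--Spruck Sobolev inequality.

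\textbf{Step 1 ($L^2$-pinching).} Since $p_0$ is the center of mass, each $s_\d(r)x_i/r$ has zero $M$-average by \eqref{center-mass-test-func} and is admissible as a Rayleigh test function. Summing the resulting inequalities and using the pointwise bound \eqref{3.2} yields
\begin{equation*}
\l_1(M)\int_M|X|^2 + \d\int_M|X^\top|^2\le n|M|.
\end{equation*}
Substituting the pinching $\l_1(M)\ge n(\d+H_\infty^2)/(1+\e)$ produces the mean estimate $\int_M s_\d^2(r)\le(1+O(\e))\alpha|M|$; combining with the integrated identities \eqref{3.4}--\eqref{3.6} (with $\frac{1}{|M|}\int|H|^2$ replaced by $H_\infty^2$) and with $c_\d^2+\d s_\d^2=1$ furnishes the matching lower bound together with the transversal smallness $\int_M|X^\top|^2\le C_0\e\,\alpha|M|$. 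Promoting these to the quadratic pinching
\begin{equation*}
\int_M\bigl(1-(\d+H_\infty^2)s_\d^2(r)\bigr)^2\le C_0\e\,|M|
\end{equation*}
uses $0<c_\d(r)\le 1$, valid by $R\le\pi/(8\sqrt{\d})$ when $\d>0$. The constant $C_0$ depends only on $n,\d,R,A_1$; the extrinsic non-collapsing \eqref{rescaling-inv-non-local-col} and Heintze--Reilly \eqref{1steigen-ineq} confine $\alpha$ and $|M|$ to a controlled range throughout.

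\textbf{Step 2 (Moser iteration).} Set $u_\pm=(\pm(s_\d^2(r)-\alpha))_+$. Using \eqref{3.1} and the divergence identity $\divv_M X^\top=nc_\d-nH\langle X,\nu\rangle$, a direct computation gives a differential inequality controlling $|\nabla^M u_\pm|^2$ by $u_\pm^2$ plus error terms involving $|X^\top|^2$ and $H_\infty^2-|H|^2$. Testing the Hoffman--Spruck inequality \eqref{Hoffman-Spruck}, whose hypothesis \eqref{vol-inj-require} holds under \eqref{location-size} by \S\ref{sec:2.2}, with $f=u_\pm^\b$ for $\b\ge 1$ and absorbing cross-terms via Young's inequality yields a reverse H\"older chain
\begin{equation*}
\|u_\pm\|_{L^{\b n/(n-1)}(M)}\le (C\b)^{C/\b}\|u_\pm\|_{L^{\b}(M)},
\end{equation*}
with constant depending only on $n,\d,R,A_1$. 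Iterating with $\b_k=2(n/(n-1))^k$ and summing the geometric series produces an $L^\infty$ bound of the form $\|u_\pm\|_\infty\le C\alpha\,\e^{1/(2n+1)\cdot(1/\sigma)}$ for a suitable $\sigma\in\{1,2\}$ arising from the interpolation back to the $L^2$ pinching of Step 1; extracting a square root via $||X|-\sqrt{\alpha}|\le|u_\pm|/(|X|+\sqrt{\alpha})$ converts this into the claimed $\e^{1/(2(2n+1))}$ bound \eqref{main-theo-1.1}.

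\textbf{Main obstacle.} The principal difficulty is to keep the Moser iteration genuinely uniform in the location of $M$ inside $B(p,R)$: the earlier argument of Grosjean--Roth required $M\subset B(p,\e)$, which trivializes the ambient geometry as $\e\to 0$. Dropping this restriction demands tight control of the Sobolev constant and of the $\b$-dependence in the reverse H\"older step using only the extrinsic non-collapsing \eqref{rescaling-inv-non-local-col}, the volume monotonicity of Lemma \ref{lem-local-ball-volume-estimate}, and the outer-radius bound $R\le\pi/(8\sqrt{\d})$ in the case $\d>0$. The outer-radius assumption in particular ensures $c_\d(r)>0$ along $M$, keeping the divergence identity \eqref{3.1} quantitatively coercive and preventing the integration-by-parts computations at the core of the iteration from degenerating; the lower Moser iterate for $u_-$ is the delicate one, as it must exploit the pinching-driven almost-constancy of $|H|$ to extract a lower bound on $|X|$ without access to an a priori pointwise geodesic-sphere structure.
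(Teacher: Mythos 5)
Your overall two-step strategy—first an $L^2$-pinching on the position vector, then Moser iteration via Hoffman--Spruck to get a pointwise bound—is indeed the shape of the paper's argument, and you correctly identify that the outer-radius hypothesis ($R\le\pi/(8\sqrt{\d})$ when $\d>0$) is what keeps $c_\d>0$ and makes the divergence identity coercive without the $M\subset B(p,\e)$ crutch of Grosjean--Roth. However, there are two concrete gaps.

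First, the ``promotion'' at the end of your Step 1 is not justified. The Rayleigh-quotient computation together with \eqref{3.2}, \eqref{3.4}--\eqref{3.6}, and Lemma \ref{lem-l2-bound-perp} gives only first-moment control, namely $1\le(\d+\|H\|_\infty^2)\|X\|_2^2\le 1+4\e$ and $\|X^\top\|_2^2\le 2\e/\|H\|_\infty^2$; these say nothing directly about the second moment $\int_M\bigl(1-(\d+\|H\|_\infty^2)|X|^2\bigr)^2$, and the remark ``uses $0<c_\d\le 1$'' does not bridge that gap. The paper avoids this by replacing your quadratic quantity with the auxiliary function $\psi=|X|^{1/2}\bigl||X|-h^{-1}\bigr|$ (with $h=\sqrt{\d+\|H\|_\infty^2}$) and decomposing it into $Y=\|H\|_\infty^2X-Hc_\d\nu$ and $W=|X|^{1/2}\bigl(\d X+Hc_\d\nu-hX/|X|\bigr)$; Lemma \ref{lem-estimate-psi-1} then produces the requisite $L^2$ bounds on $Y$ and $W$, hence an $L^1$ bound $\|\psi\|_1\lesssim h^{-3/2}\e^{1/2}$, which is the actual entry point of the iteration. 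If you want to run the Moser step on $u_\pm=\bigl(\pm(|X|^2-\a)\bigr)_+$ instead, you must produce the corresponding initial $L^1$ or $L^2$ bound by hand, and it is not clear your Step 1 does so.

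Second, your Step 2 omits the ingredient that makes the iteration close uniformly in the position of $M$: a prior $L^\infty$ bound on $|X|$ itself. Since $\nabla^M u_\pm$ and $\nabla^M\psi$ both involve $s_\d c_\d\nabla^M r$, the coefficients in the reverse-H\"older step are controlled only once $\|X\|_\infty\le C(n)A_1^{n/2}\|X\|_2$ is established (the paper's Lemma \ref{lem-infity-bound-position}, itself a Moser iteration fed by \eqref{4.5} and \eqref{lower-bound-mean-curv}). You allude to extrinsic non-collapsing but do not isolate this estimate, and without it the exponent bookkeeping (your $(C\b)^{C/\b}$) is not verifiable. One point in your favor: by working with $u_\pm=||X|^2-\a|$ you bypass the paper's concluding cubic-function argument (the $f(t)=t(t-h^{-1})^2$ level-set trick used to force $|X|>1/(3h)$ by connectedness), since $\bigl||X|-\sqrt{\a}\bigr|=u_\pm/(|X|+\sqrt{\a})\le u_\pm/\sqrt{\a}$ needs no lower bound on $|X|$; but that simplification is only usable once the two gaps above are filled.
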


We point it out that Theorem  \ref{main-theo-1} still holds when $K_N$ only admits an upper bound $\d$; see Remark \ref{rem-fixed-position}.

The proof of Theorem \ref{main-theo-1} is left to next section.

Part II. We prove (M1) and (M2), i.e., $M$ is Hausdorff close to geodesic sphere $S(p_0,R_0)$ and $B(p_0,R_0)$ is $C^{1,\alpha}$-close to a ball of constant curvature.

We need the following bound on the mean curvature $H$ of $M$.

\begin{lem}\label{lem-bound-mean-curv}
	Let $M^n$ be an immersed, oriented, connected and closed hypersurface in a geodesic contractible  ball $B(p,R)$ of $N^{n+1}$, where $K_N \leq \d$. If $\d>0$, we further assume that $R\leq \frac{\pi}{8\sqrt{\d}}$. Then	
	\begin{align}\label{lower-bound-mean-curv}
	\|H\|_\infty &\geq \frac{c_\d}{s_\d}(2R)\ge \sqrt{|\d|},\\
	\label{ineq-2side-bound-ratio-mean-curv}
	\min\left\{1,c_\d(2R)\right\}& \leq \frac{\|H\|_\infty}{\sqrt{\d+\|H\|_\infty^2}} \leq  \max\left\{1,c_\d(2R)\right\}.
	\end{align}
\end{lem}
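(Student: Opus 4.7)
The plan is to derive both bounds from Heintze's integrated comparison inequality (\ref{3.4}), together with the monotonicity of the ratio $c_\delta/s_\delta$ and of $h\mapsto h/\sqrt{\delta+h^2}$.

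First, since $p_0$ is the center of mass of $M$ and lies in $B(p,R)$, while $M\subset B(p,R)$, every $x\in M$ satisfies $r(x)=d(p_0,x)\le 2R$. The function $c_\delta/s_\delta$ is monotonically decreasing on its domain in all three cases ($\delta>0$, $\delta=0$, $\delta<0$); so pointwise on $M$ we have $c_\delta(r)\ge (c_\delta/s_\delta)(2R)\,s_\delta(r)$. Integrating this and combining with (\ref{3.4}) gives
\begin{equation*}
\frac{c_\delta(2R)}{s_\delta(2R)}\int_M s_\delta \;\le\; \int_M c_\delta \;\le\; \|H\|_\infty \int_M s_\delta,
\end{equation*}
which yields $\|H\|_\infty\ge (c_\delta/s_\delta)(2R)$. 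The strengthening $\|H\|_\infty\ge \sqrt{|\delta|}$ is then just a check in each sign: for $\delta<0$, $(c_\delta/s_\delta)(2R)=\sqrt{-\delta}\coth(2\sqrt{-\delta}R)\ge\sqrt{-\delta}$; for $\delta=0$ it is trivial; for $\delta>0$, the assumption $R\le\pi/(8\sqrt{\delta})$ forces $2\sqrt{\delta}R\le\pi/4$, so $\cot(2\sqrt{\delta}R)\ge 1$ and hence $(c_\delta/s_\delta)(2R)\ge\sqrt{\delta}$. This gives (\ref{lower-bound-mean-curv}).

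Second, for (\ref{ineq-2side-bound-ratio-mean-curv}) I would use that $f(h)=h/\sqrt{\delta+h^2}$ has derivative $\delta/(\delta+h^2)^{3/2}$, so $f$ is increasing when $\delta>0$, constant ($\equiv 1$) when $\delta=0$, and decreasing when $\delta<0$. Combined with the pointwise bound $\|H\|_\infty\ge (c_\delta/s_\delta)(2R)$ just established, and the fundamental identity $\delta s_\delta^2+c_\delta^2=1$, one computes
\begin{equation*}
f\!\left(\tfrac{c_\delta(2R)}{s_\delta(2R)}\right)=\frac{c_\delta(2R)/s_\delta(2R)}{\sqrt{\delta+c_\delta(2R)^2/s_\delta(2R)^2}}=\frac{c_\delta(2R)}{\sqrt{\delta s_\delta(2R)^2+c_\delta(2R)^2}}=c_\delta(2R).
\end{equation*}
When $\delta>0$, so $c_\delta(2R)\le 1$: monotonicity of $f$ gives the lower bound $f(\|H\|_\infty)\ge c_\delta(2R)=\min\{1,c_\delta(2R)\}$, while the upper bound $f(\|H\|_\infty)\le 1=\max\{1,c_\delta(2R)\}$ is automatic. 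When $\delta<0$, so $c_\delta(2R)\ge 1$: $f$ is decreasing, giving the upper bound $f(\|H\|_\infty)\le c_\delta(2R)=\max\{1,c_\delta(2R)\}$, while $f(\|H\|_\infty)\ge 1$ follows from $\|H\|_\infty^2\ge \|H\|_\infty^2+\delta$. The case $\delta=0$ is an identity.

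There is no real obstacle here; the mild subtlety is only to keep track of the sign of $\delta$ when applying monotonicity of $f$, and to confirm that the hypothesis $R\le\pi/(8\sqrt{\delta})$ in the positive-curvature regime is exactly what is needed to convert the geometric bound $(c_\delta/s_\delta)(2R)$ into the simpler comparison $\ge\sqrt{\delta}$.
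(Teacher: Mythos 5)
Your proof is correct, and for \eqref{lower-bound-mean-curv} you take a genuinely different route from the paper. You derive $\|H\|_\infty\ge (c_\d/s_\d)(2R)$ by combining the integrated Heintze inequality \eqref{3.4}, $\int_M c_\d\le\int_M |H|\,s_\d$, with the monotone decrease of $c_\d/s_\d$ and the bound $r(x)=d(p_0,x)\le 2R$ on $M$ (using $p_0\in B(p,R)$ from \S\ref{sec:2.4}). The paper instead argues pointwise: it touches $M$ from outside with a geodesic sphere centered at $p_0$ and quotes Jorge--Xavier to see that the mean curvature of $M$ at the touching point already dominates $(c_\d/s_\d)(2R)$. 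Your version leverages machinery already in place for the Rayleigh-quotient estimates and is integral in flavor; the paper's supporting-sphere argument is more elementary and self-contained, requiring neither the center-of-mass construction nor \eqref{3.4} (only that the touching sphere has radius $\le 2R$). For \eqref{ineq-2side-bound-ratio-mean-curv} your argument coincides with the paper's — monotonicity of $f(x)=x/\sqrt{\d+x^2}$ — and your explicit check that $f\bigl((c_\d/s_\d)(2R)\bigr)=c_\d(2R)$ via $\d s_\d^2+c_\d^2=1$ makes the endpoint value transparent in a way the paper leaves implicit.
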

\begin{proof}
	For (\ref{lower-bound-mean-curv}), let us consider the supporting sphere outside of $M$. It is easy to see (c.f. \cite[Theorem 1]{Jorge-Xavier1981}) that the mean curvature of $M$ is no less than that of the supporting sphere, which by Jacobi comparison $\ge \frac{c_\d}{s_\d}(2R)$.
	
	For (\ref{ineq-2side-bound-ratio-mean-curv}), let us define a function $f(x):=\frac{x}{\sqrt{\d+x^2}}$. Then $f'(x)=\frac{\d}{(\sqrt{\d+x^2})^3}$. By (\ref{lower-bound-mean-curv}), we have $\|H\|_\infty\geq \frac{c_\d}{s_\d}(2R)$. Since $f(x)$ is monotone increasing when $\d\geq 0$, we get $c_\d(2R)\leq \frac{\|H\|_\infty}{\sqrt{\d+\|H\|_\infty^2}}\leq 1$. If $\d<0$, then $f(x)$ is monotone decreasing and $1\leq \frac{\|H\|_\infty}{\sqrt{\d+\|H\|_\infty^2}}\leq c_\d(2R)$.
\end{proof}

Let us prove (M1) first.

\begin{proof}[Proof of (M1) in Main Theorem]
	~
	
	Let us lift $M$ to $T_{p_0}N$, and identify the lift as $M$ itself. Since $\mu \le K_N\le \d$, the pull-back metric $g^*=\exp_{p_0}^*g$ is equivalent to $g_{p_0}$.
	
	By the definition of Hausdorff distance, we will prove that
	$S(0,R_0)$ lies in
	$4(2n-1)\e_1\cdot s_\d(R_0)$-neighborhood of $M$ with respect to the Euclidean metric $(T_{p_0}N,g_{p_0})$, where $\e_1=C_1\e^\frac{1}{2(2n+1)}$ and $C_1$ is the constant in Theorem \ref{main-theo-1}.
	
	Let us argue by contradiction. If it fails for some $\e$, then there is a vector $v \in T_{p_0}N$ such that $|v|=R_0$ and $B(v,\eta)\cap M=\emptyset$, where $\eta=4(2n-1)\e_1\cdot s_\d(R_0)$.

	 Let $T_\eta(t)$ be a family of sphere-tori with respect to $(T_{p_0}N,g_{p_0})$ given by the embedding
	\begin{align*}
	T_\eta(t): S^{n-1}\times S^{1} \to & \mathbb R^{n+1}=\mathbb R^n\oplus \mathbb R v,\\
                 (\xi,\theta)\mapsto &  (R_0 \sin t-2\eta \cos \theta) \xi + (R_0\cos t+2\eta \sin \theta)v,
	\end{align*}
	such that $S^1$ is a small circle of radius $2\eta$, which rotates around $\mathbb R v$-axis at the level $R_0\cos t$ with $t_0< t\le \frac{\pi}{2}$, where $t_0>0$ is determined by $R_0\sin t_0=2\eta$.
	
	Since the small circle contributes a large principal curvature, by a direct calculation the mean curvature $H_{T_\eta(t),g_{p_0}}$ with respect to the Euclidean metric $g_{p_0}$ satisfies
	$$|H_{T_\eta(t),g_{p_0}}(y)|\ge \frac{1}{4n\eta}, \qquad \text{for any $y\in T_\eta(t)$}.$$
	
	Let $H_{T_\eta(t),g^*}$ be the mean curvature of $T_\eta(t)$ with respect to the pull-back metric $g^*$.
	We claim that
	\begin{claim}\label{claim-hausclose}
		As $\eta\to 0$,
		$$\frac{H_{T_\eta(t),g^*}}{H_{T_\eta(t),g_{p_0}}}\to 1.$$
	\end{claim}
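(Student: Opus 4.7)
The plan is to work in normal coordinates centred at $p_0$ and exploit the scale separation: the principal curvature of $T_\eta(t)$ along the small $S^1$-factor diverges like $1/\eta$, whereas the perturbation of the pullback metric $g^{\ast}=\exp_{p_0}^{\ast}g$ away from the Euclidean model $g_{p_0}$ on $T_{p_0}N$ is controlled by the ambient curvature alone and is $\eta$-independent.

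Since $\mu\le K_N\le\d$ on $B(p,2R)$, Jacobi-field comparison yields, uniformly on $\overline{B(0,2R)}\subset T_{p_0}N$, the standard bounds
$$ g^{\ast}_{ij}(x)=\delta_{ij}+O(|x|^2), \qquad \Gamma^{\ast k}_{ij}(x)=O(|x|), $$
with constants depending only on $n,\d,\mu,R$. Every $T_\eta(t)$ lies in the Euclidean ball of radius $R_0+2\eta\le 2R$ about the origin (for small $\eta$), so these bounds apply uniformly along the torus family.

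In the parametrisation $X(\xi,\theta)$ of $T_\eta(t)$ one computes $|\partial_\xi X|\asymp R_0$, $|\partial_\theta X|=2\eta$, and the crucial identity $\partial_\theta^2 X=2\eta\,\nu^{g_{p_0}}$, so in the Euclidean metric the $\theta\theta$-block dominates and $|H_{T_\eta(t),g_{p_0}}|=\tfrac{1}{2n\eta}+O(1)$, matching the quoted lower bound $1/(4n\eta)$. For the metric $g^{\ast}$, I would use the general formulas
$$ h^{g^{\ast}}_{ab}=g^{\ast}\!\left(\partial_a\partial_b X+\Gamma^{\ast k}_{ij}\partial_a X^i\partial_b X^j e_k,\,\nu^{g^{\ast}}\right), \qquad H^{g^{\ast}}=(g^{\ast}_T)^{ab}h^{g^{\ast}}_{ab}. $$
The induced metric $g^{\ast}_T$, its inverse, and the unit normal $\nu^{g^{\ast}}$ all agree with their Euclidean counterparts up to a factor $1+O(R_0^2)$, while the additional Christoffel contribution is $O(R_0)$: in the $\theta\theta$-block this uses the scale cancellation $(g^{\ast}_T)^{\theta\theta}\cdot|\partial_\theta X|^2=O(\eta^{-2})\cdot O(\eta^2)=O(1)$, and in the $\xi\xi$-block it follows by direct inspection from $|\partial_\xi X|=O(R_0)$. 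Collecting,
$$ H_{T_\eta(t),g^{\ast}}=H_{T_\eta(t),g_{p_0}}+O(1), $$
with the $O(1)$ remainder independent of $\eta$, and dividing by $|H_{T_\eta(t),g_{p_0}}|\asymp 1/\eta$ yields the claim.

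The main subtlety to check is that the $1+O(R_0^2)$ multiplicative distortions of $g^{\ast}_T$, its inverse, and $\nu^{g^{\ast}}$ combine to give only an $O(1)$ (rather than $O(R_0^2/\eta)$) correction to $H_{T_\eta(t),g^{\ast}}-H_{T_\eta(t),g_{p_0}}$; the key ingredients here are the identity $\partial_\theta^2 X\parallel\nu^{g_{p_0}}$ together with the normalisation constraint $g^{\ast}(\nu^{g^{\ast}},\nu^{g^{\ast}})=1$, so that the dominant piece of $h^{g^{\ast}}_{\theta\theta}$ equals $2\eta\,g^{\ast}(\nu^{g_{p_0}},\nu^{g^{\ast}})$ and the induced-metric distortion in $(g^{\ast}_T)^{\theta\theta}$ is absorbed against the normalisation of $\nu^{g^{\ast}}$. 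Uniformity of the $O(\cdot)$ constants in $(t,\xi,\theta)$ is immediate from compactness of $\bigcup_{t\in(t_0,\pi/2]}T_\eta(t)\subset\overline{B(0,2R)}$.
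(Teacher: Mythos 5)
Your proof takes a genuinely different route from the paper's. The paper argues by contradiction and blow-up: it rescales both metrics by $1/(4\eta^2)$, marks a point $x_\eta$ on $T_\eta(t_\eta)$, observes that the rescaled sphere-tori converge to a cylinder or sphere-torus and that the rescaled pullback metric converges in $C^{1,\alpha}$, and concludes that both mean curvatures converge to the same limit, so any subsequential discrepancy in the ratio yields a contradiction. You instead attempt a direct coordinate estimate in $p_0$-normal coordinates, aiming for $H_{T_\eta,g^*}=H_{T_\eta,g_{p_0}}+O(1)$ with the remainder $\eta$-independent. Unfortunately this direct route does not close.

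The gap is exactly at the point you flag as "the main subtlety": the claimed $O(1)$ cancellation between the $1+O(R_0^2)$ distortions of $(g^*_T)^{\theta\theta}$ and of $g^*(\nu^{g_{p_0}},\nu^{g^*})$ does not occur, because these are metric distortions in two \emph{orthogonal} directions and are not the same quantity. Writing $e_\theta=\partial_\theta X/(2\eta)$ and $n=\nu^{g_{p_0}}$, one finds $(g^*_T)^{\theta\theta}=\bigl(4\eta^2\,g^*(e_\theta,e_\theta)\bigr)^{-1}\bigl(1+O(\eta)\bigr)$, while $h^{g^*}_{\theta\theta}=\pm 2\eta\,g^*(n,\nu^{g^*})+O(R_0\eta^2)$ with $g^*(n,\nu^{g^*})=\sqrt{g^*(n,n)}\,\bigl(1+O(R_0^4)\bigr)$. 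The dominant piece of $H^{g^*}$ is therefore $\frac{\pm1}{2n\eta}\cdot\frac{\sqrt{g^*(n,n)}}{g^*(e_\theta,e_\theta)}$, and the factor $\frac{\sqrt{g^*(n,n)}}{g^*(e_\theta,e_\theta)}$ is $1+O(R_0^2)$ with the $O(R_0^2)$ \emph{not} going to zero as $\eta\to 0$ (since $R_0$ stays bounded away from zero in this paper, unlike the Grosjean--Roth setting $R_0\to 0$). Hence $H_{T_\eta,g^*}-H_{T_\eta,g_{p_0}}=O(R_0^2/\eta)$, which blows up, not $O(1)$. A concrete check: for $N$ of constant curvature $\delta\neq 0$, so that $g^*=dr^2+s_\delta^2(r)\,d\Omega^2$ in polar normal coordinates, the limiting value of $H_{T_\eta,g^*}/H_{T_\eta,g_{p_0}}$ at a point of $T_\eta(t)$ where $n$ is radial is $R_0^2/s_\delta^2(R_0)$, while where $e_\theta$ is radial it is $s_\delta(R_0)/R_0$; neither equals $1$. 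What your computation \emph{does} give, once the distortion factor is tracked honestly, is a two-sided bound $C(n,\delta,\mu,R)^{-1}\le H_{T_\eta,g^*}/H_{T_\eta,g_{p_0}}\le C(n,\delta,\mu,R)$ for $\eta$ small --- which happens to be all that the contact argument for (M1) actually uses --- but it does not prove the stated convergence of the ratio to $1$.
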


	Assuming the Claim, we continue the proof of (M1).
	
	As observed by Colbois-Grosjean \cite{Colbois-Grosjean2007}, by shifting $T_\eta(t)$ downwards along $tv$, we are able to find a point $w\in T_\eta(t_1)$ contact to $M$ for some $t_0< t_1< \frac{\pi}{2}$, which implies that the mean curvature of $H_M(w)$ is not less than that of $H_{T_\eta(t_1),g^*}(w)$.
	
	By Claim \ref{claim-hausclose}, at the contact point $w$, we have
	$$
	|H_M|(w)\ge \frac{1}{2n\eta}=\frac{1}{8n(2n-1)\epsilon_1\cdot s_\d(R_0)}.
	$$
	Since $\epsilon$ can be chosen to be sufficienly small such that 
	$$
	|H_M|(w)\ge 2\frac{c_\d}{s_\d}(R_0)>\|H_M\|_\infty,
	$$ 
	which is a contradiction.
	
	What remains is to verify Claim \ref{claim-hausclose}.
	
	Arguing by contradiction. If there are points $x_\eta$ in $T_\eta(t_\eta)$ such that  $\frac{H_{T_\eta(t),g^*}}{H_{T_\eta(t),g_{p_0}}}$ is definitely away from $1$ at $x_\eta$ as $\eta\to 0$. Let us consider the rescaled metrics $\frac{1}{4\eta^2}g_{p_0}$ and $\frac{1}{4\eta^2}g^*$. Then by identifying $\frac{1}{4\eta^2}g_{p_0}$ as one Euclidean space $(\mathbb R^{n+1},g_0)$, $x_\eta$ being the same point, and $v$ lying on the same line, the sphere-tori $T_\eta(t)$ can be written as the following parametrization
	$$ (\xi,\theta)\mapsto (\frac{R_0}{2\eta} \sin t-\cos \theta) \xi + (\frac{R_0}{2\eta}\cos t+\sin \theta)v.$$
	
	When $\eta\to 0$, by definition $\epsilon_1\to 0$ and $R_0/\eta\to \infty$. Hence, by passing to a subsequence $T_\eta(t_\eta)$ with marked point $x_\eta$, it $C^\infty$-converges to a unit cylinder $\mathbb R^{n-1}\times S^1$ or some sphere-torus isometric to
	$T_{1;s_\infty}: S^{n-1}\times S^1\to \mathbb R^{n+1}$ ($1<s_\infty<\infty$, $t_\infty=\lim t_\eta$),
	$$(\xi,\theta)\mapsto (s_\infty \sin t_\infty-\cos \theta) \xi + (s_\infty\cos t_\infty+\sin \theta)v,$$
	 in $(\mathbb R^{n+1},g_0)$.
	
	Furthermore, the metric $\frac{1}{4\eta^2}g^*$ expressed in $(\mathbb R^{n+1},g_0)$ $C^{1,\alpha}$-converges to $g_0$ as $\eta \to 0$.
	Thus both $H_{T_\eta(t_\eta),\frac{1}{4\eta^2}g^*}$ and
	$H_{T_\eta(t_\eta),g_{0}}$ converge to that of the limit cylinder or sphere-torus.
	Since the quotient $\frac{H_{T_\eta(t),g^*}}{H_{T_\eta(t),g_{p_0}}}$ is rescaling invariant, a contradiction is derived.
	
\end{proof}

Next, we prove (M2) in Main Theorem.
By (M1), $M$ is Hausdorff close to the metric sphere $B(p_0,R_0)$. Moreover, by Lemma \ref{lem-bound-mean-curv}, $\sqrt{\d+\|H\|_\infty^2}$ admits a universal lower bound. Up to a rescaling, we may assume that $R_0=s_\d^{-1}(1/\sqrt{\d+\|H\|_\infty^2})=1$ for simplicity.

We will transmit the geometric information of $M$ to the sphere $S(p_0,1)$ such that the Laplacian $\Delta r$ of distance function $r(x)=d(p_0,x)$ along $S(p_0,1)$ almost equals that on the space form of constant curvature $\d$. Then by Lemma \ref{lem-quantitative-rigidity} below, (M2) holds.

\begin{lem}[Quantitative rigidity]\label{lem-quantitative-rigidity}
	Let $B(p_0,R)$ be a geodesic contractible ball in a complete Riemannnian manifold $N^{n+1}$ with $\mu\le K_N\le \d$. If the Laplacian of distance function $r(x)=d(p_0,x)$ satisfies
	\begin{equation}\label{ineq-quan-rig}
	\frac{1}{n}\Delta r\le \frac{c_\d}{s_\d}(R)+\epsilon
	\end{equation}
	along geodesic sphere $S(p_0,R)$, then $B(p_0,R)$ is $e^{C(n,\mu,\d,R) \sqrt{\epsilon}}$-almost isometric to a ball $B_\d(R)$ of constant curvature $\d$.
\end{lem}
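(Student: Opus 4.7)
The plan is to translate the boundary hypothesis on $\Delta r$ into a pointwise operator estimate on the Hessian $\nabla^2 r$ at $r=R$, then propagate this estimate inward along radial geodesics via the matrix Riccati equation, and finally integrate to get bi-Lipschitz closeness of the exponential-coordinate identification with the model ball $B_\d(R)$ of constant curvature $\d$.

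First, by the Hessian comparison theorem under $K_N\le \d$ applied on the geodesic contractible ball, we have $\nabla^2 r\ge (c_\d/s_\d)(r)\, g_r$ pointwise on $B(p_0,R)$. In particular at $r=R$, the symmetric operator $F(R):=\nabla^2 r-(c_\d/s_\d)(R)\,I$ (acting on $TS(p_0,R)$) is positive semidefinite, while the hypothesis $\frac{1}{n}\Delta r\le (c_\d/s_\d)(R)+\epsilon$ says $\operatorname{tr} F(R)\le n\epsilon$. Since $F(R)\ge 0$, this forces the operator-norm bound $\|F(R)\|_{\mathrm{op}}\le n\epsilon$ at every boundary point.

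Next I would propagate this bound backward along radial geodesics using the matrix Riccati equation $S'+S^2+\mathcal{R}_r=0$ for the shape operator $S(r)=\nabla^2 r$, where the radial Jacobi operator $\mathcal{R}_r$ satisfies $\mu I\le \mathcal{R}_r\le \d I$. With $F=S-(c_\d/s_\d)I$ this becomes $F'+F^2+2(c_\d/s_\d)F=\d I-\mathcal{R}_r$, whose right-hand side lies in $[0,(\d-\mu)I]$. Differentiating the largest eigenvalue $\lambda_{\max}(r)$ of $F(r)\ge 0$ along a unit eigenvector and reversing time by $\tau=R-r$ yields the scalar ODE inequality
\[
\frac{d\lambda_{\max}}{d\tau}\le \lambda_{\max}^2+2(c_\d/s_\d)(R-\tau)\,\lambda_{\max},
\]
which with $\lambda_{\max}(R)\le n\epsilon$ gives, by Gr\"onwall with integrating factor $s_\d^2$, the estimate $\lambda_{\max}(r)\le C(n)\,\epsilon\,s_\d(R)^2/s_\d(r)^2$ for $r$ not too small. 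Near $r=0$ this blow-up is counterbalanced by the a priori upper Hessian bound from $K_N\ge \mu$, namely $\lambda_{\max}(r)\le (c_\mu/s_\mu-c_\d/s_\d)(r)\le C(\mu,\d)\,r$ for small $r$.

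Finally, for any unit normal Jacobi field $J$ along a radial geodesic starting at $p_0$, the quantity $v(r)=(\log|J(r)|)'-(c_\d/s_\d)(r)$ lies in $[0,\lambda_{\max}(r)]$. Integrating the two complementary bounds on $\lambda_{\max}$ across an optimally chosen cutoff $r_*\sim \epsilon^{1/3}$ (a more conservative cutoff already yields the claimed rate) gives
\[
0\le \log\frac{|J(r)|}{s_\d(r)}=\int_0^r v\,dr'\le C(n,\mu,\d,R)\sqrt{\epsilon}
\]
for every $r\in (0,R]$. Hence the exponential-coordinate identification $\Phi:B_\d(R)\to B(p_0,R)$, obtained via any fixed linear isometry of tangent spaces, has bi-Lipschitz constant at most $e^{C\sqrt{\epsilon}}$, the required almost-isometry. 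The main obstacle is the backward propagation: the naive Gr\"onwall estimate picks up an exponential blow-up factor $s_\d(R)^2/s_\d(r)^2$ which is uncontrolled near the center, and only the counterbalancing a priori bound from the lower curvature hypothesis $K_N\ge \mu$ (without which the conclusion would fail, as warped-product perturbations illustrate) lets one balance the two regimes and extract the $\sqrt{\epsilon}$ rate.
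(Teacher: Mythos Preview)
Your proposal is correct and follows essentially the same route as the paper: convert the trace hypothesis at $r=R$ into an operator bound via Hessian comparison under $K_N\le\d$, propagate it inward along radial geodesics by the Riccati equation, and split at a cutoff of order $\epsilon^{1/4}$ where the Rauch bound from $K_N\ge\mu$ takes over near the center. The paper's only cosmetic difference is that it works one Jacobi field at a time and packages your Gr\"onwall step as the monotone quantity $F(t)=(\rho-\varphi)\exp\int_{d_0}^t(\rho+\varphi)$, which is precisely the integrating-factor form of your estimate.
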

\begin{proof}
	
	Let $J$ be a normal Jacobi field along a unit-speed radial geodesic $\gamma$ such that $J(0)=0$, $\gamma(0)=p_0$. Let $r$ be the distance function to $p$. Then $\dot{J}=\nabla_{\nabla r}J$ satisfies that
	$$\langle   \dot{J},X\rangle =\operatorname{Hess}r(J,X).$$
	Let $\rho(t)=\langle  \dot{J},J\rangle /|J|^2=\operatorname{Hess}r(J/|J|,J/|J|)$. Then by standard calculation for the Rauch comparison (see \cite[Theorem 6.4.3]{Petersen2016}),
	\begin{equation}\label{ineq-ode}
	-\d\le \dot{\rho}+\rho^2\le -\mu,
	\end{equation}

	By Taylor expansion $s_\mu(t)-s_\d(t)=\frac{\d-\mu}{6}t^3 +O(t^4)$,
	for $d_0=C(\mu,\d)\epsilon^{1/4}$, we have
	\begin{align*}
	\frac{|J(t)|}{s_\d(t)}\leq  \frac{s_\mu}{s_\d}(t) \leq e^{\sqrt{\epsilon}}, \quad 0<t\leq d_0.
	\end{align*}
	Thus $B(p_0,d_0)$ is $e^{\sqrt{\epsilon}}$-almost isometric to a $d_0$-ball of constant curvature $\d$.

	For $t\in [d_0,R]$, 
	let us consider  $\varphi(t):=\frac{c_\d}{s_\d}(t)$,  and the following auxiliary function
	\begin{equation}\label{func-quantitative-warped}
	F(t):=(\rho(t)-\varphi(t))e^{\int_{d_0}^t(\rho(s)+\varphi(s))ds}.
	\end{equation}
	By \eqref{ineq-ode},  $F(d_0)=\rho(d_0)-\varphi(d_0)\geq 0$.
	Since $\dot{\varphi}+\varphi^2=-\d$, by direct calculation, $\dot{F}(t)\ge 0$ for any $t\in (0,R]$.
	By (\ref{ineq-quan-rig}) and Jacobi comparison, 
	$$
	\begin{aligned}
	F(R) \le n \epsilon \cdot e^{\int_{d_0}^{R} \rho(s)+\varphi(s)ds}&=n\epsilon \frac{|J(R)|}{|J(d_0)|}\cdot \frac{s_\d(R)}{s_\d(d_0)}\\
	&\le n \epsilon \frac{s_\mu(R) s_\d(R)}{s_\mu(d_0)s_\d(d_0)} = C(n,\mu,\d,R)\sqrt{\epsilon}.
	\end{aligned}$$
	Then the monotonicity of $F$ implies $F(t)\le C(n,\mu,\d,R)\sqrt{\epsilon}$ for all $t\in [d_0,R]$. It follows that
	$$
	0\leq \rho(t)-\varphi(t) \leq  F(t)  \leq C(n,\mu,\d,R)\sqrt{\epsilon}, \quad \forall t\in [d_0,R].
	$$
	Finally, we have
	$$
	s_\d(t)\le |J(t)|\le e^{C(n,\mu,\d,R) \sqrt{\epsilon}} \cdot s_\d(t), \quad t\in [d_0,R].
	$$
\end{proof}

Let us continue the proof of (M2). Before transmitting geometric control of $M$ to $S(p_0,1)$, we make some preparation.

By the definition of $R_0=1$, we have $\|H\|_\infty=\frac{c_\d}{s_\d}(1)$. Together with \eqref{rescaling-inv-non-local-col}, we get
\begin{align}\label{uniform-upper-bound-area}
|M| \leq C(n,\d,A). 
\end{align} 

On $M$, by (\ref{main-theo-1.1}), we have
\begin{align}\label{r-Linfty-estimate}
\left|\frac{s_{\d}(r)}{s_\d(1)}-1 \right| \leq C(n,\d,R,A)\e^\frac{1}{2(2n+1)}, \quad \left|\frac{c_{\d}(r)}{c_\d(1)}-1 \right| \leq C(n,\d,R,A)\e^\frac{1}{2(2n+1)}.
\end{align}

Furthermore, based on an observation of Grosjean and Roth \cite[Proposition 2.1]{Grosjean-Roth2012} (see Lemma \ref{lem-l2-bound-perp}), we get
\begin{align}\label{nabla-r-L2-estimate}
\int_{M}|\nabla^{M}r|^2  \leq C(n,\d,A) \e.
\end{align}

In order to approximate $\Delta r$ on $M$, we refine inequality (\ref{3.1}) by Heintze into the following form.

\begin{lem}\label{lem-refine-heintze}
	Let the assumptions be as in Lemma \ref{lem-heintze}.
	\begin{equation}\label{eq-laplacian-mean-curv}
	\divv_MX^\top=s_\d \Delta r-nH\langle  X,\nu\rangle +\left[c_\d-s_\d \operatorname{Hess}r(e_n^*,e_n^*)\right]|\nabla^M r|^2,
	\end{equation}
	where $e_n^*$ is defined in proving (\ref{3.1}).
\end{lem}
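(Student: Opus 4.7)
The plan is to redo the decomposition of $\operatorname{div}_M X$ from the proof of Lemma \ref{lem-heintze}, but keep the ambient Laplacian $\Delta r:=\Delta^N r$ as a separate term rather than discarding it by Hessian comparison, which is exactly what produced the inequality in \eqref{3.1}.

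First, by the product rule and the identity $\nabla^N s_\d(r)=c_\d(r)\nabla^N r$, for any orthonormal basis $\{e_1,\dots,e_n\}$ of $T_pM$ I would compute
\begin{align*}
\operatorname{div}_M X
&=\sum_{i=1}^n\bigl\langle \nabla^N_{e_i}(s_\d(r)\nabla^N r),e_i\bigr\rangle\\
&=c_\d(r)\sum_{i=1}^n\langle \nabla^N r,e_i\rangle^2+s_\d(r)\sum_{i=1}^n\operatorname{Hess}^N r(e_i,e_i)\\
&=c_\d(r)|\nabla^M r|^2+s_\d(r)\operatorname{tr}_M\operatorname{Hess}^N r.
\end{align*}

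Next, I would use the adapted frame introduced in the proof of Lemma \ref{lem-heintze}: $e_1,\dots,e_{n-1}\perp\nabla^N r$ and $e_n=|\nabla^M r|\nabla^N r+\sqrt{1-|\nabla^M r|^2}\,e_n^*$ with $e_n^*\perp\nabla^N r$ a unit vector. Since $\operatorname{Hess}^N r(\nabla^N r,\cdot)=0$, one gets $\operatorname{Hess}^N r(e_n,e_n)=(1-|\nabla^M r|^2)\operatorname{Hess}^N r(e_n^*,e_n^*)$, so
\begin{align*}
\operatorname{tr}_M\operatorname{Hess}^N r=\sum_{j=1}^{n-1}\operatorname{Hess}^N r(e_j,e_j)+(1-|\nabla^M r|^2)\operatorname{Hess}^N r(e_n^*,e_n^*).
\end{align*}
On the other hand, completing $\{e_1,\dots,e_{n-1},e_n^*\}$ to an ambient orthonormal frame by appending $\nabla^N r$ and using $\operatorname{Hess}^N r(\nabla^N r,\nabla^N r)=0$ gives
\begin{align*}
\Delta r=\sum_{j=1}^{n-1}\operatorname{Hess}^N r(e_j,e_j)+\operatorname{Hess}^N r(e_n^*,e_n^*).
\end{align*}
Subtracting these two identities yields $\operatorname{tr}_M\operatorname{Hess}^N r=\Delta r-|\nabla^M r|^2\operatorname{Hess}^N r(e_n^*,e_n^*)$, and hence
\begin{align*}
\operatorname{div}_M X=s_\d\Delta r+\bigl[c_\d-s_\d\operatorname{Hess}r(e_n^*,e_n^*)\bigr]|\nabla^M r|^2.
\end{align*}

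Finally, using the already-established identity $\operatorname{div}_M X^\perp=nH\langle X,\nu\rangle$ from the proof of Lemma \ref{lem-heintze}, I would subtract to obtain \eqref{eq-laplacian-mean-curv}. The case $\nabla^M r=0$ is covered by the same formula, since the $|\nabla^M r|^2$ coefficient then vanishes and any choice of $e_n^*$ is immaterial. There is no real obstacle here: the lemma is an algebraic refinement of \eqref{3.1} in which the Hessian comparison step $\operatorname{Hess}^N r(e_n^*,e_n^*)\ge c_\d/s_\d$ is postponed, so that the exact ambient Laplacian appears and can be estimated separately later.
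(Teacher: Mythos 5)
Your proof is correct and takes essentially the same approach as the paper: you redo the computation of $\operatorname{div}_M X$ from the proof of Lemma \ref{lem-heintze} in the adapted frame, replace the trace over $T_pM$ by the ambient Laplacian $\Delta^N r$ using the orthonormal basis $\{e_1,\dots,e_{n-1},e_n^*,\nabla^N r\}$ of $T_pN$, and then subtract the normal contribution $\operatorname{div}_M X^\perp=nH\langle X,\nu\rangle$. The paper's proof is just a terser version of exactly this, asserting the same formula for $\operatorname{div}_M X$ and referring back to the proof of \eqref{3.1}.
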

\begin{proof}
	By direct calculation as in Lemma \ref{lem-heintze},
	$$\divv_MX=s_\d \Delta r+\left[c_\d-s_\d \operatorname{Hess}r(e_n^*,e_n^*)\right]|\nabla^M r|^2.$$
	Then (\ref{eq-laplacian-mean-curv}) follows by proceeding along the proof lines of (\ref{3.1}).
\end{proof}

Integrating (\ref{eq-laplacian-mean-curv}), by divergence theorem and \eqref{nabla-r-L2-estimate} we get
\begin{align}\label{refine-heintze-estimate}
\int_M (s_\d \D r-n H \langle X,\nu\rangle)=\int_M (s_\d \operatorname{Hess}r(e_n^\ast,e_n^\ast)-c_\d)|\nabla^M r|^2\leq C(n,\mu,\d,R,A) \e.
\end{align}
Thus, by \eqref{uniform-upper-bound-area}, \eqref{r-Linfty-estimate}, \eqref{refine-heintze-estimate} and $\D r \le n\frac{c_\mu}{s_\mu}(r)$, we have
\begin{align*}
     &\int_{M} \(\D r-n\frac{c_\d}{s_\d}(1)\) \\ 
\leq &\int_{M} \(\frac{s_\d(r)}{s_\d(1)}\D r-n\frac{s_\d(r)}{s_\d(1)}\|H\|_\infty\)+\int_{M} \(1-\frac{s_\d(r)}{s_\d(1)}\)\D r +n\|H\|_\infty \int_{M} \(\frac{s_\d(r)}{s_\d(1)}-1\)\\ 
\leq &\frac{1}{s_\d(1)}\int_{M} \(s_\d \D r-nH \langle X,\nu\rangle\)+\(\|\D r\|_\infty +n\|H\|_\infty\)|M| \left|1-\frac{s_\d(r)}{s_\d(1)}\right| \\
\leq &C(n,\mu,\d,R,A) \e^\frac{1}{2(2n+1)}.
\end{align*}
On the other hand, it follows from \eqref{r-Linfty-estimate} and $\D r \ge n\frac{c_\d}{s_\d}(r)$ that 
\begin{align*}
\int_{M} \(n\frac{c_\d}{s_\d}(1)-\D r\)  \leq \int_{M} n\(\frac{c_\d}{s_\d}(1)-\frac{c_\d}{s_\d}(r)\) \leq C(n,\mu,\d,R,A) \e^\frac{1}{2(2n+1)}.
\end{align*}
Finally, we conclude that
\begin{equation}\label{deviation-laplace}
\int_{M} \left|\frac{c_\d}{s_\d}(1)-\frac{1}{n}\Delta r \right| \leq C(n,\mu,\d,R,A)\e^\frac{1}{2(2n+1)}.
\end{equation}

Now we are ready to prove (M2).
\vspace{2mm}
\begin{proof}[Proof of (M2) in Main Theorem]
	~
	
	Let us prove that there is $\epsilon_1>0$ such that
	\begin{equation}\label{ineq-m2}
	\frac{1}{n}\Delta r\le \frac{c_\d}{s_\d}(1)+\epsilon_1
	\end{equation}
	holds along the geodesic sphere $S=S(p_0,1)$ and $\epsilon_1\to 0$ as $\epsilon\to 0$.
	
	Then by Lemma \ref{lem-quantitative-rigidity}, $B(p_0,1)$ is $e^{C(n,\mu,\d,R)\sqrt{\epsilon_1}}$-isometric to a ball of constant curvature $\d$. Furthermore, by Cheeger-Gromov's convergence (Theorem \ref{thm-ChGro-Convergence}) (M2) holds.
	
	We now prove (\ref{ineq-m2}) by dividing into 2 steps.
	
	Step 1. For any fixed $s>0$ and $x\in S$, there are points in $M$ lying $B(x,s)$ with $\frac{1}{n}\Delta r<\frac{c_\d}{s_\d}(1)+\e'$, where $\e'\to 0$ as $\e\ra 0$.
	
	Let us argue by contradiction. If there is $\e'>0$ such that all points in $M\cap B(x,s)$ satisfy $\frac{1}{n}\Delta r\ge \frac{c_\d}{s_\d}(1)+ \e'$. Let $y_0\in M$ be a nearest point to $x$, then
	$$\int_{M \cap B(x,s)}\left| \frac{c_\d}{s_\d}(1)-\frac{1}{n}\Delta r\right|\ge
	\int_{M \cap B(y_0,\frac{s}{2})}\left| \frac{c_\d}{s_\d}(1)-\frac{1}{n}\Delta r\right|
	\ge |M \cap B(y_0,\frac{s}{2})|\cdot \e'$$
	By Lemma \ref{lem-local-ball-volume-estimate} below, the volume $|M \cap B(y_0,\frac{s}{2})|$ admits a positive lower bound. This contradicts to (\ref{deviation-laplace}).
	
	Step 2. There is $\e''>0$ such that (\ref{ineq-m2}) holds for $\e''$ at any point in $S$, where $\e''\to 0$ as $\e\to 0$.
	
	Indeed, by Step 1 as $\e\to 0$ there exists $0<s(\e)\to 0$ such that for any point $x\in S$, we are able to take a point $y$ in $M$ such that $d(x,y)<s(\e)$ and (\ref{ineq-m2}) holds at $y$.
	If $x$ and $y$ lie in the same radial geodesic starting at $p_0$, then by $\mu \le K_N\le \d$ and standard Jacobi estimate, Step 2 holds for such $x$.
	
	In the following we assume that $x$ and $y$ lies in different radial directions at $p_0$.
	Let $\alpha(t)=\exp_{p_0}(v+tw)$, where $\exp_{p_0}(v)=x$ and $\exp_{p_0}(v+w)=y$. Let $\{e_i\}_{i=1}^{n+1}$ be an orthonormal frame along $\alpha$. Then we extend $e_i$ to be Jacobi fields along radial geodesics starting at $p_0$. By direct calculation,
	\begin{align*}
	\frac{d}{d t}\Delta r  = \frac{d }{d t}\sum_i\langle  \nabla^N_{\nabla^N r}e_i,e_i\rangle 
	= \sum_i\langle  \nabla^N_{\alpha'(t)}\nabla^N_{\nabla^N r}e_i,e_i\rangle 
	=\sum_i\langle   R^N(\alpha',\nabla^N r)e_i,e_i\rangle 
	\end{align*}
	Since the curvature operator $R^N$ is bounded by $\frac{2}{3}(\d-\mu)$ and $|\alpha'|\le s_\d(1)\cdot d(x,y)$, we get
	$$
	\left|\frac{d}{dt}\Delta r\right|\le \frac{2}{3}(\d-\mu)s_\d(1)\cdot s.
	$$
	It follows that
	$$\Delta r(y)\le \Delta r(x) +\frac{2}{3}(\d-\mu)s_\d(1)\cdot s.$$
	Since by Step 1, $s(\e)\to 0$ as $\e\to 0$, the proof of Step 2 is completed.
\end{proof}

The following extrinsic non-collapsing property of $M$, which can be traced back to \cite[Proposition 1.12]{Colding-Minicozzi-book}, is important to Step 1 above in (M2)'s proof.

\begin{lem}\label{lem-local-ball-volume-estimate}
Let $M$ be a properly immersed hypersurface in a complete Riemannian manifold $N$ with $K_N\leq \d$. Let $x_0\in M$, and $0< r_0\le \min\left\{\inj_N (x_0), \|r\|_\infty, \frac{\pi}{2\sqrt{\d}}\right\}$, where $r:M\to \mathbb R$ is the distance to $x_0$ in $N$. If $\|H\|_\infty\leq \L$, for any $s\in (0,r_0)$ we have
\begin{align*}
\frac{|B(x_0,s) \cap M|}{\omega_n s_\d(s)^n} \leq e^{(n\L +\sqrt{\max\{-\d,0\}})r_0} \frac{|B(x_0,r_0) \cap M|}{\omega_n s_\d(r_0)^n}.
\end{align*}
In particular, letting $s\ra 0$ we have
\begin{align*}
|B(x_0,r) \cap M| \ge e^{-(n\L +\sqrt{\max\{-\d,0\}})r}\omega_n s_\d^n(r), \quad \forall r\in(0,r_0].
\end{align*}
\end{lem}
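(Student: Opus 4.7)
My plan is to establish the claim as a weighted monotonicity formula for the extrinsic ball volume ratio $V(s):=|B(x_0,s)\cap M|$ divided by $s_\d(s)^n$, following the template of Bishop--Gromov / Michael--Simon monotonicity adapted to the curved ambient setting. The first step is a pointwise divergence bound along $M$. Taking the radial vector field $X = s_\d(r)\nabla^N r$, so that $X^\top = s_\d(r)\nabla^M r$ and $|\langle X,\nu\rangle|\le s_\d(r)$, Lemma \ref{lem-heintze}(i) (already proved earlier in the excerpt) combined with $\|H\|_\infty\le\Lambda$ gives the pointwise bound
\begin{equation*}
\operatorname{div}_M X^\top \ge n c_\d(r) - n\Lambda\, s_\d(r).
\end{equation*}

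Integrating over $B(x_0,s)\cap M$ for a.e.\ regular value $s\in(0,r_0]$ (via Sard applied to $r|_M$) and applying the divergence theorem, with outward conormal $\eta=\nabla^M r/|\nabla^M r|$ along the boundary, yields
\begin{equation*}
s_\d(s)W(s) \ge n\int_{B(x_0,s)\cap M}\bigl(c_\d(r)-\Lambda s_\d(r)\bigr)\,dV,
\end{equation*}
where $W(s):=\int_{\partial B(x_0,s)\cap M}|\nabla^M r|\,d\mathcal{H}^{n-1}$. By the coarea formula together with integration by parts in $t$ against $V(t)$, one has $\int_{B\cap M}c_\d(r)\,dV = V(s)c_\d(s)+\delta\int_0^s s_\d(t)V(t)\,dt$ and $\int_{B\cap M}s_\d(r)\,dV\le s_\d(s)V(s)$, while $|\nabla^M r|\le 1$ forces $V'(s)\ge W(s)$. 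Feeding these into the quotient derivative $\frac{d}{ds}(V/s_\d^n)=(s_\d V'-nc_\d V)/s_\d^{n+1}$ produces the key differential inequality
\begin{equation*}
\frac{d}{ds}\log\frac{V(s)}{s_\d(s)^n} \ge -n\Lambda + \frac{n\delta\int_0^s s_\d(t)V(t)\,dt}{s_\d(s)V(s)}.
\end{equation*}

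For $\delta\ge 0$ the curvature correction is non-negative, so integrating from $s$ to $r_0$ gives the monotonicity with exponent $n\Lambda r_0$ at once. For $\delta<0$, the monotonicity of $V$ together with $(-\delta)\int_0^s s_\d(t)\,dt = c_\d(s)-1$ and the identity $(c_\d(s)-1)/s_\d(s)=\sqrt{-\delta}\tanh(\sqrt{-\delta}\,s/2)$ yields a lower bound of the form $-(n\Lambda+\sqrt{-\delta})$ on the log-derivative; the subtlety here is that the naive application of the comparison produces $n\sqrt{-\delta}$, and reaching the sharper constant $\sqrt{-\delta}$ stated in the lemma requires tracking the $|\nabla^M r|^2$ term through the divergence computation more carefully, or Gronwall-integrating the coupled system for $V$ and its antiderivative $\int_0^s s_\d V$. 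Once the differential inequality is integrated from $s$ to $r_0$, the first conclusion follows. The pointwise lower bound on $|B(x_0,r)\cap M|$ is then obtained by letting $s\to 0$: smoothness of the immersion at $x_0$ implies, in normal coordinates centered there, that $B(x_0,s)\cap M$ is $C^1$-approximated by the tangent $n$-plane, so $V(s)/(\omega_n s_\d(s)^n)\to 1$. The main obstacle is the sharp constant on $\sqrt{-\delta}$ in the negatively curved case; the rest is a direct computation once the correct weighting of the radial vector field is in hand.
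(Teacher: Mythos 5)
Your starting point coincides with the paper's: Lemma~\ref{lem-heintze}(i), $\divv_M X^\top\ge nc_\d-nH\langle X,\nu\rangle$, together with $\|H\|_\infty\le\L$. The subsequent routes differ, and the divergence is exactly where your argument develops a genuine gap that you yourself flag.

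You integrate $\divv_M X^\top$ directly over $\{r\le s\}$, then integrate by parts against $c_\d$ in $t$ to produce the term $\d\int_0^s s_\d(t)V(t)\,dt$. Because the $n$ in $\divv_M X^\top\ge nc_\d-\cdots$ survives this manipulation, your curvature correction comes out as
\[
\frac{n\d\int_0^s s_\d(t)V(t)\,dt}{s_\d(s)V(s)}\;\ge\;-n\,\frac{c_\d(s)-1}{s_\d(s)}\;\ge\;-n\sqrt{-\d}
\]
for $\d<0$, i.e.\ the exponent $n\L+n\sqrt{-\d}$, and you correctly note that this is off by a factor of $n$ from the claimed $n\L+\sqrt{\max\{-\d,0\}}$. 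Your proposed fix (``tracking the $|\nabla^M r|^2$ term more carefully'') is directionally right but not on its own sufficient, because in your framework the $|\nabla^M r|^2$ factor and the overall $n$ coefficient sit on the same term.

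The paper avoids this entirely by a different weighting. Instead of applying Stokes to $X^\top$, it first divides the pointwise inequality by $c_\d$ and rewrites
\[
\frac{\divv_M X^\top}{c_\d}=\divv_M\!\left(\frac{X^\top}{c_\d}\right)-\frac{\d s_\d^2|\nabla^M r|^2}{c_\d^2},
\]
so that the constant $n$ appears on the left as a bare constant (no curvature correction needed to reproduce it), while the curvature contribution $-\d s_\d^2|\nabla^M r|^2/c_\d^2$ enters \emph{additively without any factor of $n$} and already carries the helpful $|\nabla^M r|^2\le1$ weight. Feeding this into $\frac{d}{ds}\bigl(s_\d^{-n}V\bigr)$ and using $s_\d/c_\d(r)\le s_\d/c_\d(s)\le 1/\sqrt{|\d|}$ for $\d<0$ gives the sharp coefficient $\sqrt{|\d|}$ rather than $n\sqrt{|\d|}$, and the Gronwall step then produces exactly the stated exponent. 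Your estimates for $\d\ge 0$, the coarea and $|\nabla^M r|\le1$ input, and the $s\to0$ limit via tangency at $x_0$ are all correct; the only missing idea is this $1/c_\d$-weighted vector field, which is essential to reach the constant in the lemma as written.
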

\begin{proof}
By \eqref{3.1}, we have
\begin{align*}
\operatorname{div}_M(X^\top) \geq n c_\d -n H\langle X,\nu \rangle \geq n c_\d - n|H|s_\d,
\end{align*}
which gives
\begin{align}\label{monotonicity-formula-1}
n \leq \frac{\divv_M(X^\top)}{c_\d}+\frac{n|H|s_\d}{c_\d}
= \divv_M \(\frac{X^\top}{c_\d}\)-\frac{\d s_\d^2 |\nabla^M r|^2}{c_\d^2}+\frac{n|H|s_\d}{c_\d}.
\end{align}
By the Stokes' theorem, we have
\begin{align}\label{monotonicity-formula-2}
n \Vol(\{r\le s\}) \le &\int_{\{r\le s\}} \divv_M \(\frac{X^\top}{c_\d}\)+\int_{\{r\le s\}} \(-\frac{\d s_\d^2 |\nabla^M r|^2}{c_\d^2}+\frac{n|H|s_\d}{c_\d} \) \nonumber \\
=&\int_{\{r=s\}} \left\langle \frac{X^\top}{c_\d}, \frac{\nabla^M r}{|\nabla^M r|} \right\rangle +\int_{\{r\le s\}} \(-\frac{\d s_\d^2 |\nabla^M r|^2}{c_\d^2}+\frac{n|H|s_\d}{c_\d} \) \nonumber \\
=&\frac{s_\d(s)}{c_\d(s)} \int_{\{r=s\}}|\nabla^M r| +\int_{\{r\le s\}} \( -\frac{\d s_\d^2 |\nabla^M r|^2}{c_\d^2}+\frac{n|H|s_\d}{c_\d} \).
\end{align}
The coarea formula implies that
\begin{align}\label{coarea-formula}
\Vol(\{r\leq s\})=\int_0^s \int_{\{r=s\}}|\nabla^M r|^{-1}.
\end{align}
Combining \eqref{monotonicity-formula-2} and \eqref{coarea-formula}, we have
\begin{align*}
    \frac{d}{ds}\(s_\d^{-n}(s)\Vol(\{r\le s\})\)
\ge & -s_\d^{-n}(s)\int_{\{r=s\}}\(|\nabla^M r|-|\nabla^M r|^{-1}\) \\
    &-s_\d^{-n-1}(s)c_\d(s)\int_{\{r\le s\}} \( -\frac{\d s_\d^2 |\nabla^M r|^2}{c_\d^2}+\frac{n|H|s_\d}{c_\d} \)\\
\ge & \d s_\d^{n-1}(s)c_\d(s)\int_{\{r\le s\}} \frac{s_\d^2 |\nabla^M r|^2}{c_\d^2} -n\L s_\d^{-n-1}(s)c_\d(s) \int_{\{r\le s\}} \frac{s_\d}{c_\d}.
\end{align*}
where we used the fact $|\nabla^M r|\le 1$. If $\d \ge 0$, then $\frac{s_\d}{c_\d}(r)\le \frac{s_\d}{c_\d}(s)$. This gives
\begin{align*}
\frac{d}{ds}\(s_\d^{-n}(s)\Vol(\{r\le s\})\)
\ge & -n\L s_\d^{-n-1}(s)c_\d(s) \int_{\{r\le s\}} \frac{s_\d}{c_\d} \\
\ge & -n\L s_\d^{-n}(s)\Vol(\{r\le s\}).
\end{align*}
If $\d<0$, then $\frac{s_\d}{c_\d}(r)\le \frac{s_\d}{c_\d}(s) \le \frac{1}{\sqrt{|\d|}}$. It follows that
\begin{align*}
\frac{d}{ds}\(s_\d^{-n}(s)\Vol(\{r\le s\})\)
\ge & -|\d|s_\d^{n-1}(s)c_\d(s)\int_{\{r\le s\}} \frac{s_\d^2 |\nabla^M r|^2}{c_\d^2} -n\L s_\d^{-n-1}(s)c_\d(s) \int_{\{r\le s\}} \frac{s_\d}{c_\d}\\
\ge & (-\sqrt{|\d|}-n\L)\(s_\d^{-n}(s)\Vol(\{r\le s\})\)
\end{align*}
Combining these estimates together, we obtain
\begin{align*}
\frac{d}{ds}\(s_\d^{-n}(s)\Vol(\{r\le s\})\) \ge (-\sqrt{\max\{0,-\d\}}-n\L)\(s_\d^{-n}(s)\Vol(\{r\le s\})\).
\end{align*}
Let $F(s)=s_\d^{-n}(s)\Vol(\{r\le s\}$, then for any $s\in(0,r_0)$ we have
\begin{align*}
F(s) \leq F(r_0) e^{(n\L+\sqrt{\max\{0,-\d\}})(r_0-s)}\le F(r_0)e^{(n\L+\sqrt{\max\{0,-\d\}})(r_0)},
\end{align*}
which completes the proof.
\end{proof}

\section{1st eigenvalue pinching implies fixed position}
\label{sec:4}

In this section we prove Theorem \ref{main-theo-1}.

Let $p_0$ be the center of mass of $M$ and $r(x)=d(x,p_0)$, and let $X=s_\d(r)\nabla^N r$ be the position vector with respect to $p_0$ and $X^\top$ be its tangential projection over $M$.

\subsection{$L^2$-pinching of position vector}

Let us first transform  (\ref{pinching-condition-1}) to a pinching condition more related to the position vector $X$, which is due to Grosjean and Roth \cite[Proposition 2.1]{Grosjean-Roth2012}.

\begin{lem}[\cite{Grosjean-Roth2012}]\label{lem-tranform-pinching}
	Assume that $M$ is a compact immersed hypersurface in a convex ball $B(p,R)$ of $N$, where $K_N\leq \d$. (For $\d>0$, we assume in addition $R\le \frac{\pi}{8\sqrt{\d}}$.)
	Then
	\begin{align}\label{4.5}
	1\leq (\d+\|H\|_\infty^2)\|X\|_2^2.
	\end{align}
	Moreover, for any $0\le \e\le \frac{1}{2}$, (\ref{pinching-condition-1}) implies
	\begin{equation}\label{ineq-Ie}
	(\d+\|H\|_\infty^2)\|X\|_2^2 \leq 1+4\e.
	\end{equation}
	
\end{lem}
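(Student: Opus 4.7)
The plan is to establish the two inequalities separately, following the scheme of \cite{Grosjean-Roth2012}.

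\textbf{Upper bound \eqref{ineq-Ie} under pinching.} The $n+1$ components $\phi_i(x) = s_\d(r(x))\,x_i/r(x)$ in a normal frame at the center of mass $p_0$ have vanishing mean on $M$ by \eqref{center-mass-test-func}, so each is an admissible trial function in the Rayleigh characterization of $\lambda_1(M)$. Summing the resulting inequalities $\lambda_1\int_M\phi_i^2 \le \int_M|\nabla^M\phi_i|^2$ over $i$ and invoking the Heintze pointwise bound \eqref{3.2} produces
\[
\lambda_1(M)\int_M |X|^2 \;\le\; n|M| - \d\int_M|X^\top|^2.
\]
Rewriting \eqref{pinching-condition-1} as $\lambda_1 \ge n(\d+\|H\|_\infty^2)/(1+\e)$, a short calculation gives \eqref{ineq-Ie} directly for $\d\ge 0$ from $|X^\top|^2\le|X|^2$; for $\d<0$ the extra term $-\d\int_M|X^\top|^2>0$ is controlled after invoking the lower bound \eqref{4.5} (proved next) to estimate $\int_M|X^\perp|^2$ from below. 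The factor $4$ on the right of \eqref{ineq-Ie} comfortably absorbs both the switch $(1+\e)^{-1}\ge 1-\e$ (valid for $\e\le 1/2$) and the sign manipulation in the $\d<0$ case.

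\textbf{Lower bound \eqref{4.5}, case $\d\le 0$.} This falls directly out of Heintze's pointwise divergence inequality \eqref{3.5}. Since $\d\int_M|X^\top|^2 \le 0$, \eqref{3.5} simplifies to $\int_M c_\d^2 \le \|H\|_\infty\int_M s_\d c_\d$. Applying Cauchy-Schwarz to the right-hand side, $\int_M s_\d c_\d \le \sqrt{\int_M s_\d^2\cdot\int_M c_\d^2}$, and dividing yields $\int_M c_\d^2 \le \|H\|_\infty^2\int_M s_\d^2$. Integrating the trigonometric identity $c_\d^2+\d s_\d^2=1$ gives $\int_M c_\d^2 = |M| - \d\int_M s_\d^2$, and substituting back immediately produces $|M| \le (\d+\|H\|_\infty^2)\int_M s_\d^2$, which is \eqref{4.5}.

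\textbf{Lower bound \eqref{4.5}, case $\d>0$: the main obstacle.} Now $\d\int_M|X^\top|^2>0$ must be absorbed without losing the sharp coefficient on the right of \eqref{4.5}. The size restriction $R\le\pi/(8\sqrt\d)$ forces $r\le 2R\le\pi/(4\sqrt\d)$ on $M$, so $c_\d\ge\cos(\pi/4)$ there; Lemma~\ref{lem-bound-mean-curv} also yields the mean curvature lower bound $\|H\|_\infty\ge\sqrt\d$. Using both pointwise controls, one combines the Minkowski-type inequality \eqref{3.4} with \eqref{3.5}, applies Cauchy-Schwarz to $\int_M s_\d c_\d$ and $\int_M|H| s_\d$ in concert, and trades the residual $\d\int_M|X^\top|^2$ against a piece of $\|H\|_\infty^2\int_M s_\d^2$ in such a way that the coefficient $\d+\|H\|_\infty^2$ on the right of \eqref{4.5} remains sharp (tight on geodesic spheres centered at $p_0$). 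This is the delicate step and is precisely \cite[Proposition~2.1]{Grosjean-Roth2012}.
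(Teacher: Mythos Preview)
Your treatment of the lower bound \eqref{4.5} for $\d\le 0$ via \eqref{3.5} and Cauchy--Schwarz is correct and in fact tidier than the paper's route. The paper instead builds, from \eqref{3.1} and divergence, the single inequality
\[
1\;\le\;(\d+\|H\|_\infty^2)\|X\|_2^2+\frac{1}{n|M|}\int_M\Bigl(\d-\frac{c_\d}{s_\d}\|H\|_\infty\Bigr)|X^\top|^2,
\]
and then disposes of the last term by the \emph{pointwise} sign $\d-\frac{c_\d}{s_\d}\|H\|_\infty\le 0$, valid for every sign of $\d$ once $R\le\pi/(8\sqrt\d)$ and $\|H\|_\infty\ge\sqrt{|\d|}$. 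This unifies the two cases; your $\d>0$ paragraph gestures at the right ingredients but does not isolate this pointwise inequality, which is the actual mechanism, and ``trading $\d\int_M|X^\top|^2$ against a piece of $\|H\|_\infty^2\int_M s_\d^2$'' is not what happens.

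For the upper bound with $\d<0$ there is a genuine gap. Invoking \eqref{4.5} to ``estimate $\int_M|X^\perp|^2$ from below'' does not by itself control $-\d\|X^\top\|_2^2$: \eqref{4.5} bounds $\|X\|_2^2$, not the normal part, and you have no a~priori upper bound on $\|X\|_2^2$ at this stage. The paper's argument is different: it reuses the divergence chain that led to \eqref{4.5} in the reversed form
\[
\frac{\|H\|_\infty}{|M|}\int_M\frac{c_\d}{s_\d}|X^\top|^2\;\le\;\frac{n\e}{1+\e}(\d+\|H\|_\infty^2)\|X\|_2^2,
\]
then observes $\frac{c_\d}{s_\d}\|H\|_\infty\ge -\d$ pointwise to deduce $-\d\|X^\top\|_2^2\le\frac{n\e}{1+\e}(\d+\|H\|_\infty^2)\|X\|_2^2$. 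Substituting back into the Rayleigh estimate yields $(\d+\|H\|_\infty^2)\|X\|_2^2\le(1+\e)/(1-\e)\le 1+4\e$. Your sketch omits this feedback step, which is where the $4\e$ actually comes from.
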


By Lemma \ref{lem-tranform-pinching}, we will use the following $L^2$-pinching condition in remaining subsections instead of (\ref{pinching-condition-1}).
\begin{equation}\label{l2-pinching}
1\le (\d+\|H\|_\infty^2)\|X\|_2^2 \leq 1+\e
\end{equation}

Since some estimates in the proof of Lemma \ref{lem-tranform-pinching} will be used later, for completeness we give its proof.
\begin{proof}[Proof of Lemma \ref{lem-tranform-pinching}]
	~
	
	We first prove the first inequality (\ref{4.5}). By divergence theorem and (\ref{3.1}),
	\begin{align}
	\int_M \(n-\d|X^\top|^2\)& \leq \int_M \(n-\divv(X^\top) c_\d\) \notag\\
	& \leq \int_M \(n-n c^2_\d+nH\langle X,\nu \rangle  c_\d\) \quad (\text{by putting (\ref{3.1}) into above})\notag\\
	& \leq  \int_M n\d  s_\d^2+\|H\|_\infty\int_M n  s_\d  c_\d \quad (\text{by $c^2_\d+\d s_\d^2=1$}) \notag\\
	&\leq  n\d\int_M  s_\d^2+\|H\|_\infty\int_M \(nH\langle X,\nu\rangle s_\d+\divv(X^\top) s_\d \) \quad (\text{by (\ref{4.5}) again}) \label{calculate-1}
	\end{align}
	By divergence theorem, $\int_M \divv(X^\top)s_\d=-\int_M c_\d \langle  \nabla^N r, X^\top\rangle $. Then by using $|X|=s_\d$, we derive
	\begin{align}
	(\ref{calculate-1})
	& \leq  n\d\int_M |X|^2+n\|H\|_\infty^2\int_M |\langle X,\nu\rangle||X|-\int_M \|H\|_\infty\frac{ c_\d}{ s_\d}|X^\top|^2 \notag\\
	& \leq n(\d+\|H\|_\infty^2)\int_M |X|^2-\int_M \frac{ c_\d}{ s_\d}\|H\|_\infty|X^\top|^2, \label{4.2}
	\end{align}
	which can be transformed to
	\begin{align}\label{4.3}
	1\leq (\d+\|H\|_\infty^2)\|X\|_2^2+\frac{1}{n|M|}\int_M \(\d-\frac{ c_\d}{ s_\d} \|H\|_\infty\) |X^\top|^2.
	\end{align}
	
	Note that $M\subset B(p,R)\subset B(p_0,2R)$, where $K_N\le \delta$.
	
	If $\d \leq 0$, then (\ref{4.5}) follows from \eqref{4.3}, since the latter term in RHS of (\ref{4.3}) is non-positive.
	
	If $\d>0$, in order to estimate the latter term in RHS of \eqref{4.3}, we further require $R\le \frac{\pi}{8\sqrt{\d}}$. Then by \eqref{lower-bound-mean-curv} and Rauch comparison, we have
	\begin{align*}
	\min\left\{\|H\|_\infty,\frac{c_\d}{s_\d}(r(x))\right\}\geq \frac{ c_\d}{ s_\d}(2R)\geq \frac{c_\d}{s_\d}\(\frac{\pi}{4\sqrt{\d}}\)=\sqrt{\d}, \quad \text{for all $x\in M$}.
	\end{align*}
	Thus we derive
	\begin{align}\label{4.6}
	\d-\frac{c_\d}{ s_\d}(r(x)) \|H\|_\infty\leq \d-\sqrt{\d}\|H\|_\infty=\sqrt{\d}(\sqrt{\d}-\|H\|_\infty)\leq 0, \quad \text{for all $x\in M$},
	\end{align}
	And (\ref{4.5}) follows from (\ref{4.6}) and (\ref{4.3}).
	
	Next, let us prove (\ref{pinching-condition-1}) implies the 2nd inequality (\ref{ineq-Ie}).
	
	By multiplying $\|X\|_2^2$ to both side of $(\ref{pinching-condition-1})$, we derive
	\begin{align}
	n(\d+\|H\|_\infty^2)\|X\|_2^2 &\leq (1+\e)\l_1(M)\|X\|_2^2 \notag\\
	& \leq (1+\e)\|\nabla^M X\|_2^2 \quad (\text{by Rayleigh quotient}).\label{4.7}
	\end{align}
	Since in normal coordinates, $X=(\frac{s_\d(r)}{r}x_i)_{i=1}^{n+1}$, combining
	(\ref{3.2}) and (\ref{4.7}) we have
	\begin{equation}\label{est-l2-pinching-1}
	n(\d+\|H\|_\infty^2)\|X\|_2^2 \leq  (1+\e)(n-\d\|X^\top\|_2^2)
	\end{equation}
	
	For $\d \geq 0$, (\ref{est-l2-pinching-1}) immediately implies (\ref{ineq-Ie}).

	If $\d<0$, it is necessary to estimate $-\d \|X^\top \|_2^2$. By (\ref{4.2}) we have
	\begin{align}\label{4.8}
	n-\d\|X^\top\|_2^2 \leq n(\d+\|H\|_\infty^2)\|X\|_2^2-\frac{\|H\|_\infty}{|M|}\int_M \frac{ c_\d}{ s_\d}|X^\top|^2.
	\end{align}
	by reformulating (\ref{4.8}), we have
	\begin{align}
	\frac{\|H\|_\infty}{|M|}\int_M \frac{ c_\d}{ s_\d}|X^\top|^2 &\leq n(\d+\|H\|_\infty^2)\|X\|_2^2-(n-\d\|X^\top\|^2)\nonumber\\
	&\leq \frac{n\e}{1+\e}(\d+\|H\|_\infty^2)\|X\|_2^2. \quad \text{(by puting (\ref{est-l2-pinching-1}) into above)} \label{ineq-l2-pinching-a}
	\end{align}
		
	On the other hand, by the fact
	$$\min\left\{\|H\|_\infty, \frac{c_\d}{s_\d}(r(x))\right\} \geq \sqrt{-\d}  \quad\text{for $x\in M$},$$ it is clear
	\begin{equation}\label{int-lower-bound-1}
	\frac{\|H\|_\infty}{|M|}\int_M \frac{ c_\d}{ s_\d}|X^\top|^2\ge -\d\|X^\top\|_2^2.
	\end{equation}
	
	By combining (\ref{ineq-l2-pinching-a}) and (\ref{int-lower-bound-1}),
	\begin{align}\label{ineq-l2-pinching-b}
	-\d\|X^\top\|_2^2\leq\frac{n\e}{1+\e}(\d+\|H\|_\infty^2)\|X\|_2^2.
	\end{align}
	Put (\ref{ineq-l2-pinching-b}) into (\ref{est-l2-pinching-1}) (or the middle of (\ref{ineq-l2-pinching-a})), we derive
	\begin{align*}
	(\d+\|H\|_\infty^2)\|X\|_2^2 \leq \frac{1+\e}{1-\e} \leq 1+4\e,
	\end{align*}
	where the last inequality holds for $0\le \e\le \frac{1}{2}$,
\end{proof}

An easy corollary of Lemma \ref{lem-tranform-pinching} is an $L^2$-bound of the tangential component $X^\top$ of position vector $X$, which has been used in the proof of (M2).
\begin{lem}\label{lem-l2-bound-perp}
	Let $M$ be as in Main Theorem. If (\ref{l2-pinching}) holds, then
	\begin{equation}
	\|X^\top\|_2^2 \leq \frac{2\e}{\|H\|_\infty^2},\label{l2-bound-perp-position}
	\end{equation}
\end{lem}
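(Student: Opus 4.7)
The plan is to revisit the chain of inequalities producing \eqref{4.2} in the proof of Lemma \ref{lem-tranform-pinching} and to sharpen the single Cauchy--Schwarz estimate applied there to the cross term $n\|H\|_\infty \int_M H\langle X,\nu\rangle s_\d$. In that proof the term was bounded crudely by $n\|H\|_\infty^2 \int_M |X|^2$, discarding the distinction between $|X|$ and $|X^\perp|$. Instead, using $|\langle X,\nu\rangle|=|X^\perp|$ and $|X|=s_\d(r)$ on the hypersurface, Cauchy--Schwarz gives
\begin{equation*}
\Bigl|\int_M H\langle X,\nu\rangle s_\d\Bigr| \le \|H\|_\infty \left(\int_M |X^\perp|^2\right)^{1/2}\left(\int_M |X|^2\right)^{1/2},
\end{equation*}
and using $|X^\perp|^2=|X|^2-|X^\top|^2$ together with $\sqrt{1-t}\le 1-t/2$ for $t\in [0,1]$, this is at most $\int_M|X|^2 - \tfrac12\int_M|X^\top|^2$.

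Substituting this refined bound into the derivation of \eqref{4.2} and rearranging yields
\begin{equation*}
n|M| + \int_M \left[\frac{n\|H\|_\infty^2}{2} + \|H\|_\infty \frac{c_\d}{s_\d}(r) - \d\right]|X^\top|^2 \le n(\d + \|H\|_\infty^2)\int_M |X|^2.
\end{equation*}
Applying the $L^2$-pinching \eqref{l2-pinching} to bound the right-hand side by $n|M|(1+\e)$ and cancelling $n|M|$ on both sides leaves
\begin{equation*}
\int_M \left[\frac{n\|H\|_\infty^2}{2} + \|H\|_\infty\frac{c_\d}{s_\d}(r) - \d\right]|X^\top|^2 \le n|M|\e.
\end{equation*}

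It then remains to verify that $\|H\|_\infty\tfrac{c_\d}{s_\d}(r)-\d\ge 0$ on $M$, so that the bracket is bounded below by $\tfrac{n\|H\|_\infty^2}{2}$. For $\d\le 0$ this is immediate from $c_\d, s_\d>0$. For $\d>0$ the hypothesis $R\le \tfrac{\pi}{8\sqrt{\d}}$ places $M$ inside $B(p_0,2R)\subset B(p_0,\tfrac{\pi}{4\sqrt{\d}})$, hence $\tfrac{c_\d}{s_\d}(r)\ge \sqrt{\d}$; combined with $\|H\|_\infty\ge \sqrt{\d}$ from Lemma \ref{lem-bound-mean-curv}, the product is at least $\d$. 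Dividing through by $|M|$ then gives $\|X^\top\|_2^2\le \tfrac{2\e}{\|H\|_\infty^2}$. The only genuinely nontrivial point is isolating the correct refinement of Cauchy--Schwarz, which is what turns the wasted $\tfrac{n\|H\|_\infty^2}{2}\int_M|X^\top|^2$ of the crude estimate into an effective coefficient on the left-hand side; the rest is bookkeeping on identities already appearing in the proof of Lemma \ref{lem-tranform-pinching}.
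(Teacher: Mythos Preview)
Your argument is correct, but the paper's own proof reaches the same conclusion by a slightly more direct route. Rather than passing through the integral Cauchy--Schwarz inequality and the concavity bound $\sqrt{1-t}\le 1-t/2$, the paper simply retains the term $\int_M |\langle X,\nu\rangle|\,|X|$ from the derivation of \eqref{4.2} (that is, stops one step before replacing $|\langle X,\nu\rangle|$ by $|X|$). Combining \eqref{4.2} with \eqref{4.6} and the pinching \eqref{l2-pinching} then gives directly
\[
\frac{\|H\|_\infty^2}{|M|}\int_M\bigl(|X|^2-|\langle X,\nu\rangle|\,|X|\bigr)\le \e,
\]
after which the elementary \emph{pointwise} inequality
\[
|X^\top|^2=|X|^2-|\langle X,\nu\rangle|^2=(|X|-|\langle X,\nu\rangle|)(|X|+|\langle X,\nu\rangle|)\le 2\,|X|\,(|X|-|\langle X,\nu\rangle|)
\]
finishes the job. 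This pointwise estimate is logically stronger than your integral Cauchy--Schwarz step (it implies yours after integrating) and avoids the square-root trick entirely. Your version is perfectly valid and arrives at the identical bound, but the paper's pointwise argument is shorter and keeps the analysis local.
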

\begin{proof}
	By (\ref{4.2}), (\ref{4.6}) and (\ref{l2-pinching}), we have
	\begin{align*}
	\frac{\|H\|_\infty^2}{|M|} \int_M \(|X|^2-|\langle X,\nu\rangle| |X|\)
	\leq &(\d+\|H\|_\infty^2)\|X\|_2^2-1+\frac{1}{n|M|}\int_M \(\d-\frac{c_\d}{ s_\d}\|H\|_\infty\)|X^\top|^2 \\
	\leq &(\d+\|H\|_\infty^2)\|X\|_2^2-1\\
	\leq &\e.
	\end{align*}
	Thus, we obtain
	\begin{align*}
	\|X^\top\|_2^2=&\frac{1}{|M|}\int_M \(|X|^2-|\langle X,\nu\rangle|^2\)
	\leq \frac{2}{|M|}\int_M \(|X|^2-|X||\langle X,\nu\rangle|\)
	\leq \frac{2\e}{\|H\|_\infty^2}.
	\end{align*}
\end{proof}

\subsection{Estimate on the position vector}
Under the assumption (\ref{rescaling-inv-non-local-col}) we prove (\ref{main-theo-1.1}) in this subsection by following the main ideas of \cite{Colbois-Grosjean2007}.

Note that, via introducing the radius $R$ of extrinsic ball $B(p,R)$ in our estimates, we improve those in \cite{Grosjean-Roth2012} (cf. Lemmas 3.3, 3.5, etc. in \cite{Grosjean-Roth2012}) with simpler proofs, such that their technical assumption $\frac{\|H\|_\infty}{\sqrt{\d+\|H\|_\infty^2}}\le A_1$  in \cite{Grosjean-Roth2012} is dropped in our proof.

For simplicity, let $h:=\sqrt{\d+\|H\|_\infty^2}$. Let us consider an important auxiliary function
\begin{equation}\label{aux-func}
\psi=|X|^\frac{1}{2}\left| |X|-\frac{1}{h} \right|,
\end{equation}
We will estimate its $L^\infty$-norm.

Let us rewrite
\begin{equation}\label{4.10}
\begin{split}
\psi=&\frac{|X|^\frac{1}{2}}{h^2}\left|(h^2 X-\d X-H c_\d \nu)+\(\d X+H c_\d \nu-h\frac{X}{|X|}\)\right|\leq \frac{|X|^\frac{1}{2}}{h^2}|Y|+\frac{|W|}{h^2},
\end{split}
\end{equation}
where $Y=h^2 X-\d X-H c_\d \nu=\|H\|_\infty^2 X-H c_\d \nu$ and $W=|X|^\frac{1}{2}\(\d X+H c_\d \nu-h\frac{X}{|X|}\)$.

In order to estimate $\|\psi\|_\infty$, the following $L^2$-bounds are necessary.

\begin{lem}\label{lem-estimate-psi-1}
	The $L^2$-pinching condition (\ref{l2-pinching})
	with $\e<1$ implies
	\begin{align}
	\|Y\|_2^2 & \leq C(n,\d,R)h^2\e \label{ineq-upper-bound-Y}\\
	\|W\|_2^2 & \leq C(n,\d,R,A_1)h \e \label{ineq-upper-bound-W-final}
\end{align}
\end{lem}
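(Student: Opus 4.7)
My plan is to derive both $L^2$-bounds by tracking the slack in the chain of inequalities that led to the $L^2$-pinching \eqref{l2-pinching} of Lemma~\ref{lem-tranform-pinching}, combined with the supplementary control $\|X^\top\|_2^2\le 2\e/\|H\|_\infty^2$ from Lemma~\ref{lem-l2-bound-perp}. Both $Y$ and $W$ vanish identically on a model geodesic sphere of radius $R_0=1/h$ in constant curvature $\d$ (a direct check using $\|H\|_\infty=c_\d(R_0)/s_\d(R_0)$, $|X|=1/h$, $c_\d(R_0)=\|H\|_\infty/h$), so each quantity is genuinely a measure of deviation and should be controllable by $\e$.

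For $\|Y\|_2^2$, I would just expand
\[
|Y|^2=\|H\|_\infty^4|X|^2-2\|H\|_\infty^2\,Hc_\d\langle X,\nu\rangle+H^2c_\d^2,
\]
and handle the three terms separately. Use $H^2\le\|H\|_\infty^2$ and the identity $c_\d^2=1-\d|X|^2$ to get $\int_M H^2c_\d^2\le \|H\|_\infty^2(|M|-\d\int_M|X|^2)$. For the cross-term, multiply \eqref{3.1} by $c_\d\ge 0$ and integrate, using $\int_M c_\d\divv_M X^\top=\d\int_M|X^\top|^2$ (integration by parts plus $\nabla^M c_\d=-\d s_\d\nabla^M r$) to deduce
\[
\int_M Hc_\d\langle X,\nu\rangle\ge \int_M c_\d^2-\tfrac{\d}{n}\int_M|X^\top|^2.
\]
Plugging in and cancelling, one arrives at
\[
\|Y\|_2^2\le \|H\|_\infty^2(h^2\|X\|_2^2-1)+\tfrac{2\d\|H\|_\infty^2}{n}\|X^\top\|_2^2.
\]
Inserting \eqref{l2-pinching} and \eqref{l2-bound-perp-position}, and using Lemma~\ref{lem-bound-mean-curv} (which gives $\|H\|_\infty^2/h^2$ two-sided bounded in terms of $\d,R$) yields $\|Y\|_2^2\le C(n,\d,R)h^2\e$.

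For $\|W\|_2^2$, I would use the identity $Z:=\d X+Hc_\d\nu-hX/|X|=h(h|X|-1)X/|X|-Y$ (from $h^2=\d+\|H\|_\infty^2$ and $Hc_\d\nu=\|H\|_\infty^2 X-Y$) to split
\[
|W|^2=|X||Z|^2=|X|\bigl(h^2(\d|X|-h)\text{-radial part}+Hc_\d\nu\bigr)^2,
\]
expand, and reduce everything to integrals of the model quantities $\int_M|X|(\d|X|-h)^2$, $\int_M|X|H^2c_\d^2$, and $\int_M(\d|X|-h)Hc_\d\langle X,\nu\rangle$. The first two can be handled by the same algebraic identities as before plus the pointwise bound $\|X\|_\infty\le s_\d(2R)$; the third is obtained by multiplying \eqref{3.1} by $s_\d c_\d$ (rather than $c_\d$ alone) and integrating, invoking Stokes as in Lemma~\ref{lem-refine-heintze}. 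The extra factor of $|X|\sim 1/h$ explains why the final bound carries $h$ in place of $h^2$. The condition $|M|^{1/n}\|H\|_\infty\le A_1$ enters here: it turns the Sobolev/Hoffman--Spruck inequality (Theorem~\ref{theo-2.1}) into a uniform control $|M|\le C(n,\d,R,A_1)h^{-n}$, which is needed to absorb powers of $|X|$ that arise when passing between normalised and unnormalised integrals.

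The main obstacle will be the bookkeeping for $W$: unlike $Y$, the integrand carries the weight $|X|$, so every application of Heintze's divergence estimate \eqref{3.1} must be multiplied by the correct power of $s_\d$ before integrating, and one must choose these multipliers so that the resulting boundary-type terms either telescope against the expansion of $|W|^2$ or are absorbed into the tangential error $\|X^\top\|_2^2$. The delicate point is ensuring that the coefficient of $\|X\|_\infty$ or of $|M|$ appearing after Cauchy--Schwarz combines with $\|H\|_\infty$ to give exactly one factor of $h$, rather than $h^2$; this is precisely where the role of $A_1$ (through the extrinsic non-collapsing \eqref{rescaling-inv-non-local-col} and Lemma~\ref{lem-local-ball-volume-estimate}) becomes essential.
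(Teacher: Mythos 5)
Your argument for $\|Y\|_2^2$ is essentially identical to the paper's: expand, bound the cross-term by multiplying \eqref{3.1} by $c_\d$ and applying Stokes to get exactly \eqref{ineq-inner-product}, absorb $\|X^\top\|_2^2$ via \eqref{l2-bound-perp-position}, and use Lemma~\ref{lem-bound-mean-curv} to trade $\|H\|_\infty$ for $h$. That part is sound and matches the source.

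The $W$-bound, however, has a genuine gap. Your algebraic identity
$$|W|^2 = |X|(\d|X|-h)^2 + |X|H^2c_\d^2 + 2(\d|X|-h)Hc_\d\langle X,\nu\rangle$$
is correct, but the grouping does not lead to a clean cancellation. The cross-term $2(\d|X|-h)Hc_\d\langle X,\nu\rangle$ has no definite sign and splits into two pieces, $-2hHc_\d\langle X,\nu\rangle$ (handled by \eqref{ineq-inner-product}) and $2\d|X|Hc_\d\langle X,\nu\rangle$. For the latter your suggestion of multiplying \eqref{3.1} by $s_\d c_\d$ only produces a lower bound on $\int_M s_\d Hc_\d\langle X,\nu\rangle$, which is the wrong direction when $\d>0$; also the first term $\int_M|X|(\d|X|-h)^2$ is $O(h)$ (not $O(h\e)$) in isolation — e.g. for $\d=0$ it equals $h^2\|X\|_1$ — so it must cancel against the other pieces, and you have not shown how. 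The paper avoids all of this via the different grouping $|W|^2 = |X||\d X+Hc_\d\nu|^2 - 2h\langle\d X+Hc_\d\nu,X\rangle + h^2|X|$ together with the algebraic identity $|\d X+Hc_\d\nu|^2 = \d+H^2 - \d|HX-c_\d\nu|^2$, which isolates the "constant" $\d+H^2\le h^2$ plus a correction controlled by $\frac{1}{|M|}\int_M|HX-c_\d\nu|^2 \le C(n)\e$. Without an analogue of this identity your bookkeeping for $W$ cannot close.

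Your identification of where $A_1$ enters is also off. You plan to use the pointwise bound $\|X\|_\infty\le s_\d(2R)$, but that is a fixed constant independent of $h$ and gives the wrong scaling: it produces $\|H\|_\infty^2\|X\|_\infty \sim h^2$, yielding $\|W\|_2^2 \lesssim h^2\e$ rather than $h\e$. What the paper actually needs is $\|X\|_\infty \lesssim 1/h$, obtained via the Moser-iteration bound $\|X\|_\infty\le C(n)A_1^{n/2}\|X\|_2$ of Lemma~\ref{lem-infity-bound-position} combined with $\|X\|_2\le\sqrt{1+\e}/h$; this is the precise point at which $A_1$ is essential, and the simple volume count $|M|\le C h^{-n}$ you propose is not a substitute.
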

An upper bound of $\|X\|_\infty$, which is by a Nirenberg-Moser type iteration, is also used in estimating $\|\psi\|_\infty$.

\begin{lem}\label{lem-infity-bound-position}
	If $|M|^\frac{1}{n}\|H\|_\infty\leq A_1$, then
	\begin{align}\label{ineq-upper-bound-infinity-position}
	\|X\|_\infty\leq C(n)A_1^{\frac{n}{2}} \|X\|_2.
	\end{align}
\end{lem}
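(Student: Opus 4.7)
The plan is to carry out a Nash--Moser iteration based on the Hoffman--Spruck Sobolev inequality (Theorem \ref{theo-2.1}). The only analytic input on $|X|$ that I need is the pointwise gradient bound $|\nabla^M|X||\le\sqrt n$. This follows from \eqref{3.2}: writing $X=(f_1,\ldots,f_{n+1})$ in normal coordinates at $p_0$, the identity $|X|\nabla^M|X|=\sum_i f_i\,\nabla^M f_i$ combined with Cauchy--Schwarz yields
\[
|\nabla^M|X||\le\Bigl(\sum_i|\nabla^M f_i|^2\Bigr)^{1/2}\le\sqrt n.
\]

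For any exponent $q\ge 1$, I apply Theorem \ref{theo-2.1} to the nonnegative function $f=|X|^q$, which is admissible under \eqref{location-size} as explained in \S\ref{sec:2.2}. Combined with the gradient bound above and $|H|\le\|H\|_\infty$, this gives
\[
\Bigl(\int_M|X|^{qn/(n-1)}\Bigr)^{(n-1)/n}\le C(n)\Bigl(q\sqrt n\int_M|X|^{q-1}+\|H\|_\infty\int_M|X|^q\Bigr).
\]
A H\"older estimate $\int_M|X|^{q-1}\le|M|^{1/q}(\int_M|X|^q)^{(q-1)/q}$, combined with Young's inequality (used to pair each stray factor of $|M|^{1/n}$ with a $\|H\|_\infty$ so as to reconstitute $A_1=|M|^{1/n}\|H\|_\infty$), yields a reverse-H\"older step of the schematic form
\[
\|X\|_{q\beta}\le\bigl(C(n)\,q\,A_1\bigr)^{1/q}\|X\|_q,\qquad \beta:=\tfrac{n}{n-1},
\]
where $\|\cdot\|_p$ denotes the $|M|$-normalized $L^p$ norm on $M$.

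Iterating along the geometric sequence $q_k:=2\beta^k$ and taking logarithms produces
\[
\log\|X\|_\infty\le\log\|X\|_2+\sum_{k=0}^\infty\frac{1}{q_k}\log\bigl(C(n)\,q_k\,A_1\bigr).
\]
The geometric sum $\sum_{k\ge 0}q_k^{-1}=\tfrac12\cdot\tfrac{\beta}{\beta-1}=\tfrac n2$ is precisely what produces the exponent $A_1^{n/2}$, while $\sum_k q_k^{-1}\log q_k<\infty$ is absorbed into the dimensional constant $C(n)$. This delivers the desired inequality $\|X\|_\infty\le C(n)A_1^{n/2}\|X\|_2$.

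The most delicate point, and where I expect the main obstacle, is the clean packaging of the Sobolev right-hand side into the reverse-H\"older form above: the gradient contribution carries $|M|^{1/n}$ (after H\"older) but no $\|H\|_\infty$, while the mean-curvature contribution carries $\|H\|_\infty$ but no $|M|^{1/n}$, so Young's inequality with carefully chosen conjugate exponents is needed to fuse them into a single factor of $A_1$. Once this packaging is done, the iteration and the summation of constants are routine Moser bookkeeping, and the final exponent $n/2$ is forced by dimensional analysis via the geometric-series identity above.
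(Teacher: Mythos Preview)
Your overall architecture (Hoffman--Spruck Sobolev followed by Moser iteration, with the geometric series $\sum q_k^{-1}=n/2$ producing the exponent $A_1^{n/2}$) is exactly right and matches the paper. But the two specific technical steps you flag are both genuinely problematic.

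First, the gradient bound $|\nabla^M|X||\le\sqrt n$ via \eqref{3.2} is only valid for $\d\ge 0$. When $\d<0$ the inequality \eqref{3.2} reads $\sum_i|\nabla^M f_i|^2\le n+|\d|\,|X^\top|^2$, which is not bounded by $n$; equivalently, $|\nabla^M|X||=c_\d(r)|\nabla^M r|\le c_\d(r)=\cosh(\sqrt{|\d|}\,r)$ can exceed $\sqrt n$. So your first input already fails in the hyperbolic-type case.

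Second, Young's inequality cannot perform the packaging you describe. After normalizing, the Sobolev step gives
\[
\|X\|_{q\beta}^{\,q}\le C(n)\,|M|^{1/n}\bigl(q\sqrt n\,\|X\|_{q-1}^{\,q-1}+\|H\|_\infty\|X\|_q^{\,q}\bigr),
\]
so the gradient term is $C(n)\,q\,|M|^{1/n}\|X\|_{q-1}^{\,q-1}$ with no factor of $\|H\|_\infty$ anywhere. Young's inequality only redistributes products; it cannot manufacture an $\|H\|_\infty$ out of $|M|^{1/n}$ and $\|X\|$. What is actually needed is the \emph{lower} bound $1\le\sqrt2\,\|H\|_\infty\|X\|_\infty$, which follows from \eqref{4.5} together with $\|H\|_\infty\ge\sqrt{|\d|}$ (Lemma~\ref{lem-bound-mean-curv}). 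The paper uses this single observation to kill both problems at once: it bounds $c_\d\le 1+\sqrt{|\d|}\,|X|\le C\|H\|_\infty\|X\|_\infty$ for all $\d\in\R$, so that $|\,d|X|^{2\a}|\le C\a\,|X|^{2\a-1}\|H\|_\infty\|X\|_\infty$. After Sobolev this gives the clean recursion
\[
\|X\|_{2\a\beta}^{\,2\a}\le C(n)\,\a\,A_1\,\|X\|_\infty\,\|X\|_{2\a-1}^{\,2\a-1},
\]
with the extra $\|X\|_\infty$ on the right absorbed at the end of the iteration via $\|X\|_\infty^{\,n+2}\le C(n)A_1^{\,n}\|X\|_\infty^{\,n}\|X\|_2^{\,2}$. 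This is the missing idea in your sketch; once you have it, the rest of your bookkeeping is correct.
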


The proofs of Lemmas \ref{lem-estimate-psi-1} and \ref{lem-infity-bound-position} will be given later in the appendix.

Let us continue the proof of Theorem \ref{main-theo-1}.

\begin{lem}\label{lem-infinity-bound-psi}
	The $L^2$-pinching (\ref{l2-pinching}) with $\e<1$ and $|M|^\frac{1}{n}\|H\|_\infty\leq A_1$ implies that
	\begin{equation}\label{ineq-upper-bound-infinity-psi}
	\|\psi\|_\infty \leq \frac{C(n,\d,R,A_1)}{h^\frac{3}{2}}\e^\frac{1}{2(2n+1)}.
	\end{equation}
\end{lem}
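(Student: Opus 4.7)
My plan is to combine the pointwise decomposition \eqref{4.10} with Lemma \ref{lem-estimate-psi-1} to first extract an $L^2$ bound on $\psi$, and then to upgrade to the claimed $L^\infty$ bound via a Nirenberg-Moser iteration driven by the Hoffman-Spruck Sobolev inequality (Theorem \ref{theo-2.1}).

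For the $L^2$ step I would square \eqref{4.10} and integrate to obtain
\begin{equation*}
\|\psi\|_2^2 \le \frac{2\|X\|_\infty}{h^4}\|Y\|_2^2 + \frac{2}{h^4}\|W\|_2^2.
\end{equation*}
The $L^2$ pinching \eqref{l2-pinching} gives $\|X\|_2 \le \sqrt{1+\e}/h$, which combined with \eqref{ineq-upper-bound-infinity-position} yields $\|X\|_\infty \le C(n,A_1)/h$; plugging this together with \eqref{ineq-upper-bound-Y} and \eqref{ineq-upper-bound-W-final} back in produces $\|\psi\|_2^2 \le C(n,\d,R,A_1)\,\e/h^3$. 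The same ingredients also give the crude pointwise ceiling $\psi \le |X|^{3/2} + |X|^{1/2}/h \le C(n,\d,R,A_1)/h^{3/2}$, which does not yet see the smallness in $\e$ but fixes the correct power of $h$ for the final estimate.

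To upgrade from $L^2$ to $L^\infty$, I would apply Theorem \ref{theo-2.1} to $f=\psi^p$ for $p\ge 1$, after verifying via Lemma \ref{theo-2.2} that the hypothesis \eqref{vol-inj-require} is inherited from \eqref{location-size} and \eqref{rescaling-inv-non-local-col}. A direct computation from \eqref{aux-func} yields the pointwise gradient bound $|\nabla \psi|\le C(n,\d,R)$ away from the level set $|X|=1/h$, where I would replace $\psi$ by a smooth majorant or split the integration into the regions $|X|\ge 1/h$ and $|X|<1/h$. Combined with $\|H\|_\infty\le C$ from Lemma \ref{lem-bound-mean-curv}, the Sobolev inequality then produces a telescoping recursion of the form
\begin{equation*}
\|\psi\|_{pn/(n-1)} \le \bigl(C(n,\d,R,A_1)\,p\bigr)^{1/p}\|\psi\|_p,
\end{equation*}
and summing the logarithms of the rung constants over $p_k=2\bigl(\tfrac{n}{n-1}\bigr)^k$ while balancing against the pointwise ceiling gives an interpolation inequality
\begin{equation*}
\|\psi\|_\infty \le C(n,\d,R,A_1)\,\|\psi\|_\infty^{2n/(2n+1)}\,\|\psi\|_2^{2/(2n+1)}.
\end{equation*}
Substituting the ceiling $\|\psi\|_\infty\le C/h^{3/2}$ into the first factor and $\|\psi\|_2\le C\e^{1/2}/h^{3/2}$ into the second then collapses to $\|\psi\|_\infty \le C(n,\d,R,A_1)\,\e^{1/(2(2n+1))}/h^{3/2}$, as claimed.

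The hard part will be the Moser step: one has to keep the accumulated Sobolev constants uniformly bounded in the rung index depending only on $(n,\d,R,A_1)$, verify that the volume hypothesis \eqref{vol-inj-require} holds along each rung, handle the non-smoothness of $\psi$ at $|X|=1/h$, and calibrate the telescoping so that the $L^\infty$ ceiling and the $L^2$ smallness interpolate into exactly the exponent $\tfrac{1}{2(2n+1)}$ rather than a weaker root of $\e$.
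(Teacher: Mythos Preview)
Your overall plan—an $L^p$ bound on $\psi$ from \eqref{4.10} and Lemma \ref{lem-estimate-psi-1}, then Moser iteration via \eqref{Hoffman-Spruck}—is the paper's strategy, but two of the ingredients you invoke are wrong and the argument does not close as written. First, Lemma \ref{lem-bound-mean-curv} gives only a \emph{lower} bound $\|H\|_\infty\ge\frac{c_\d}{s_\d}(2R)$; there is no upper bound on $\|H\|_\infty$ (a tiny geodesic sphere has large mean curvature), so the only usable control is the scale-invariant $|M|^{1/n}\|H\|_\infty\le A_1$, and the $h$-dependence must be carried explicitly through every rung. Second, the pointwise bound $|\nabla\psi|\le C(n,\d,R)$ fails more seriously than at the kink $|X|=1/h$: differentiating $\psi=|X|^{1/2}\bigl||X|-1/h\bigr|$ produces a factor $|X|^{-1/2}$, and at this stage nothing prevents $|X|$ from being small somewhere on $M$—that lower bound is precisely what Theorem \ref{main-theo-1} is supposed to deliver \emph{after} this lemma. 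So the homogeneous recursion $\|\psi\|_{pn/(n-1)}\le (Cp)^{1/p}\|\psi\|_p$ is not available.

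The paper resolves both issues simultaneously by differentiating $\psi^2=|X|\bigl(|X|-1/h\bigr)^2$, a polynomial in $|X|$ with no singularity: one gets $|d\psi^2|\le\bigl||X|-1/h\bigr|\,\bigl|3|X|-1/h\bigr|\,c_\d\le C(n,A_1)\|H\|_\infty h^{-3}$ (using $\|X\|_\infty\le C/h$ and $c_\d\le 1+\|H\|_\infty\|X\|_\infty$), hence $|d\psi^{2\a}|\le C\a\,\psi^{2\a-2}\|H\|_\infty h^{-3}$. Together with $|H|\psi^{2\a}\le\|H\|_\infty\|\psi\|_\infty^2\psi^{2\a-2}\le C\|H\|_\infty h^{-3}\psi^{2\a-2}$, the Sobolev inequality yields the shift-by-two recursion $\|\psi\|_{2\a n/(n-1)}^{2\a}\le C(n,A_1)\,\a\,h^{-3}\,\|\psi\|_{2\a-2}^{2\a-2}$. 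Iterating from the $L^1$ seed $\|\psi\|_1\le h^{-2}\bigl(\|X\|_2^{1/2}\|Y\|_2+\|W\|_2\bigr)\le C h^{-3/2}\e^{1/2}$ gives $\|\psi\|_\infty^{2n+1}\le (C/h^3)^n\|\psi\|_1$, which is exactly \eqref{ineq-upper-bound-infinity-psi}. Your interpolation inequality is a heuristic shadow of this shift structure, but it does not follow from a homogeneous recursion; and even taking it at face value, substituting your ceiling and $L^2$ bound produces $\e^{1/(2n+1)}$, not the claimed $\e^{1/(2(2n+1))}$.
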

\begin{proof}
	Firstly, by (\ref{4.10}) and Lemma \ref{lem-estimate-psi-1}, we have the following $L^1$-bound of $\psi$,
	\begin{align*}
	\|\psi\|_1 \leq \frac{1}{h^2} \(\|X\|_2^\frac{1}{2}\|Y\|_2+\|W\|_2\)
	\leq \frac{C(n,\d,R,A_1)}{h^\frac{3}{2}}\e^\frac{1}{2}.
	\end{align*}
	
	Secondly, let us apply Nirenberg-Moser type iteration to estimate $\|\psi\|_\infty$ as in below.
	
	By the upper bound (\ref{ineq-upper-bound-infinity-position}) of $\|X\|_\infty$, for any $\a\geq 1$, we have
	\begin{align*}
	|\mrm{d}\psi^{2\a}|=\a \psi^{2\a-2} |\mrm{d}\psi^2|
	&\leq  \a \psi^{2\a-2} \left||X|-\frac{1}{h} \right| \left| 3|X|-\frac{1}{h} \right| \left|\,\mrm{d} |X|\,\right| \\
	&\leq  3\a \psi^{2\a-2} \(\|X\|_\infty+\frac{1}{h}\)^2  c_\d \\
	&\leq  3\a \psi^{2\a-2} \(\|X\|_\infty+\frac{1}{h}\)^2 (1+\|H\|_\infty \|X\|_\infty)\\
	&\leq  C(n,A_1)\a \psi^{2\a-2}\|H\|_\infty\frac{1}{h^3},
	\end{align*}
	and
	\begin{align*}
	\|\psi \|_\infty^2 \leq \|X\|_\infty\(\|X\|_\infty+\frac{1}{h}\)^2\leq C(n,A_1)\frac{1}{h^3}.
	\end{align*}
	Applying Sobolev inequality (\ref{Hoffman-Spruck}) to $f=\psi^{2\a}$ with $\a\geq 1$, we have
	\begin{align*}
	\(\frac{1}{|M|}\int_M \psi^{\frac{2 \a n}{n-1}} \) & \leq C(n)|M|^\frac{1}{n} \left[ \frac{1}{|M|}\int_M \( |\mrm{d}\psi^{2\a}|+|H|\psi^{2\a} \) \right]\\
	& \leq C(n)|M|^\frac{1}{n} \left[ \frac{1}{|M|}\int_M \( C(n,A_1)\a \psi^{2\a-2}\|H\|_\infty\frac{1}{h^3}+\|H\|_\infty \|\psi\|_\infty^2 \psi^{2\a-2} \) \right]\\
	& \leq C(n,A_1) \a \frac{1}{h^3}\( \frac{1}{|M|}\int_M \psi^{2\a-2} \),
	\end{align*}
	which gives
	\begin{align*}
	\|\psi\|_{\frac{2\a n}{n-1}}^{2\a} \leq & C(n,A_1) \a \frac{1}{h^3} \|\psi\|_{2\a-2}^{2\a-2}.
	\end{align*}
	We take $\tau=\frac{n}{n-1}$ and $\a=\frac{b_i}{2}+1$, where $b_{i+1}=(b_i+2)\tau$ and $b_0=1$, then
	$$
	b_i=(1+2n)\tau^{i}-2n\geq 1, \quad \text{for $i=0,1,\cdots$,}
	$$
	and $\frac{2\a n}{n-1}=2\a \tau= (b_i+2)\tau=b_{i+1}$. Then we obtain
	\begin{align*}
	\|\psi\|_{b_{i+1}}^{\frac{b_{i+1}}{\tau}} \leq & C(n,A_1) \(\frac{b_i}{2}+1\) \frac{1}{h^3} \|\psi\|_{b_i}^{b_i} \leq C(n,A_1)b_i\frac{1}{h^3} \|\psi\|_{b_i}^{b_i}.
	\end{align*}
	By iteration, we get
	\begin{align*}
	\|\psi\|_{b_{i+1}}^{\frac{b_{i+1}}{\tau^{i+1}}} \leq & \left[ C(n,A_1) b_i \frac{1}{h^3}\right]^{\frac{1}{\tau^i}}\|\psi\|^{\frac{b_i}{\tau^i}}_{b_i}
	\leq \(\prod_{k=0}^{i} b_k^{\frac{1}{\tau^k}}\)\left[ C(n,A_1)\frac{1}{h^3} \right]^{n\(1-\frac{1}{\tau^{i+1}}\)} \|\psi\|_{b_0}^{b_0}.
	\end{align*}
	Note that $\prod_{k=0}^{\infty} b_k^{\frac{1}{\tau^k}}$ converges to a constant $C(n)$ by the ratio test. Since $\frac{b_i}{\tau^i}$ converges to $1+2n$, we have
	\begin{align*}
	\|\psi\|_\infty^{1+2n} \leq \left[ C(n,A_1)\frac{1}{h^3} \right]^{n} \|\psi\|_1 \leq \frac{C(n,\d,R,A_1)}{h^{3n+\frac{3}{2}}}\e^\frac{1}{2},
	\end{align*}
	which implies (\ref{ineq-upper-bound-infinity-psi}).
\end{proof}

Now we are ready to prove Theorem \ref{main-theo-1}.
\vspace{2mm}
\begin{proof}[Proof of Theorem \ref{main-theo-1}]
	~
	
	Let $f(t):=t(t-\frac{1}{h})^2$. By Lemma \ref{lem-infinity-bound-psi} above we have
	\begin{align*}
	\|f(|X|)\|_\infty=\|\psi\|_\infty^2 \leq \frac{C(n,\d,R,A_1)}{h^3}\e^\frac{1}{2n+1}.
	\end{align*}
	If we pick $\e_0$ sufficiently small such that $C(n,\d,R,A_1)\e_0^\frac{1}{2n+1}<\frac{1}{27}$, then $\|f(|X|)\|_\infty\leq \frac{1}{27h^3}<f(\frac{1}{3h})$. Since $\|X\|_2^2 \geq \frac{1}{h^2}$, there exists a point $x_0\in M$ such that $|X_{x_0}|\geq \frac{1}{h}>\frac{1}{3h}$. By the connectedness of $M$, it follows that $|X|>\frac{1}{3h}$ on $M$. Thus, if (\ref{l2-pinching}) holds with $\e<\e_0$, then
	\begin{align*}
	\frac{1}{3h} \left| |X_x|-\frac{1}{h} \right|^2 \leq \|f(|X|)\|_\infty\leq \frac{C(n,\d,R,A_1)}{h^3}\e^\frac{1}{2n+1},
	\end{align*}
	which gives (\ref{main-theo-1.1}).
\end{proof}

\begin{rem}\label{rem-fixed-position}
	By the proofs in this section, it is clear that the validity of (\ref{main-theo-1.1}) does not depend on the lower bound of the sectional curvature of the ambient space.
	
	Moreover, the proofs of (M1) and (M2) may go through once $N$ satisfies $C^{1,\alpha}$-convergence regularity. Thus it is likely that the condition $K_N\ge \mu$ can be weakened.
\end{rem}

\section{Proof of Theorem \ref{main-theo-B}}
\label{sec:5}

In this section we prove Theorem \ref{main-theo-B}.

We will prove that, under the assumptions of Theorem \ref{main-theo-B}, there exist positive functions $$\e_2=\e_2(A_1,A_2,q,R,\d,\mu,n)>0$$ such that, pinching condition (\ref{pinching-condition-1}) with $\e<\e_2$ implies that the radial projection $F$ from $M$ to sphere $S(p_0,R_0)$ is a diffeomorphism, and satisfies
\begin{equation}\label{ineq-almost-isom}
\left| |dF_x(u)|^2-1 \right|\leq C_2\e^{\min\{ \frac{1}{2(2n+1)},\frac{q-n}{2(q-n+qn)}\}}, \quad \text{for any $x\in M$ and any unit vector $u\in T_x M$}
\end{equation}
where $C_2=C_2(n,q,\mu,\d,R,A_1,A_2)$ is a universal constant.

Let $M$ and $N$ be as in Theorem \ref{main-theo-B}.
Let $S(p_0,R_0)$ be the geodesic sphere centered at $p_0$ in $N$ with $R_0=s_\d^{-1}(\frac{1}{h})$. The radial projection is defined by
$$
F: M \to S\(p_0,R_0\)\subset N, x\mapsto \exp_{p_0}\(R_0\frac{Y}{|Y|}\),
$$
where $Y=\exp_{p_0}^{-1}(x)$. Let $\varrho:=R_0\frac{Y}{|Y|}$ for simplicity. By direct observation, the differential of $F$ satisfies the following.
\begin{lem}[{\cite[Lemma 4.1]{Grosjean-Roth2012}}]
	Let $u\in U_x M$ and $v=u-\langle u,\nabla^N r\rangle \nabla^N r$. Then
	\begin{align*}
	|dF_x(u)|^2=\frac{R_0^2}{r^2}\left| d\exp_{p_0}|_\varrho(d\exp_{p_0}^{-1}|_x(v)) \right|^2.
	\end{align*}
\end{lem}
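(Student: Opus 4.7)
The plan is to chain the radial projection $F$ as $F = \exp_{p_0} \circ \Phi \circ \exp_{p_0}^{-1}$, where $\Phi\colon T_{p_0}N\setminus\{0\}\to T_{p_0}N$ is the Euclidean radial projection $\Phi(Y)=R_0\,Y/|Y|$ onto the sphere of radius $R_0$ in $T_{p_0}N$. Applying the chain rule at $x$ yields
\[
dF_x(u) \;=\; d\exp_{p_0}\big|_{\varrho}\circ d\Phi\big|_{Y}\circ d\exp_{p_0}^{-1}\big|_{x}(u),
\]
so the task reduces to computing $d\Phi|_Y$ and tracking how it interacts with $d\exp_{p_0}$.

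The first step is the elementary Euclidean calculation
\[
d\Phi\big|_Y(\tilde u) \;=\; \frac{R_0}{|Y|}\Bigl(\tilde u-\Bigl\langle \tilde u,\tfrac{Y}{|Y|}\Bigr\rangle\tfrac{Y}{|Y|}\Bigr),
\]
valid for any $\tilde u\in T_Y(T_{p_0}N)\simeq T_{p_0}N$. In particular $d\Phi|_Y$ kills the radial component of $\tilde u$ and rescales the perpendicular component by $R_0/|Y|=R_0/r$, using $|Y|=d(p_0,x)=r$ since $B(p,R)$ is geodesically contractible.

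The second (and key) step is to invoke the Gauss lemma to identify the perpendicular component of $\tilde u:=d\exp_{p_0}^{-1}|_x(u)$ with the perpendicular component $v$ of $u$. Decomposing $u=\langle u,\nabla^N r\rangle \nabla^N r+v$ on $T_xN$, the Gauss lemma tells us that $d\exp_{p_0}|_Y$ sends the radial line $\R\,Y\subset T_{p_0}N$ to $\R\,\nabla^N r|_x$ isometrically and sends the orthogonal complement $Y^{\perp}$ to $(\nabla^N r|_x)^{\perp}$. Hence $\tilde u$ has radial component $\langle u,\nabla^N r\rangle\, Y/|Y|$ and perpendicular component $d\exp_{p_0}^{-1}|_x(v)$, so
\[
d\Phi\big|_Y(\tilde u)\;=\;\frac{R_0}{r}\,d\exp_{p_0}^{-1}\big|_x(v).
\]

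Composing with $d\exp_{p_0}|_\varrho$ and taking squared norms then gives the claimed formula. The only point that requires a little care is verifying that $|Y|=r$ and that $d\exp_{p_0}^{-1}|_x$ is well-defined, both of which follow from the assumption that $B(p,R)$ is geodesically contractible (so $M\subset B(p,R)$ lies inside the injectivity domain of $\exp_{p_0}$ once we center at the mass point $p_0\in B(p,R)$ as in \S\ref{sec:2.4}). I expect no serious obstacle here: the whole statement is a one-line consequence of the chain rule and the Gauss lemma, and the main thing to keep straight is the bookkeeping of radial versus tangential components under the two differentials.
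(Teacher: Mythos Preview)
Your argument is correct: the chain rule applied to $F=\exp_{p_0}\circ\Phi\circ\exp_{p_0}^{-1}$, together with the Gauss lemma to identify radial and transverse components under $d\exp_{p_0}^{-1}|_x$, gives exactly the stated formula. Note that the paper does not supply its own proof of this lemma; it is quoted directly from \cite[Lemma 4.1]{Grosjean-Roth2012}, so there is nothing further to compare.
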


The main technical lemma in proving (\ref{ineq-almost-isom}) is below.

\begin{lem}\label{lem-almost-isometry}
	Let the assumptions be as in Main Theorem.
	\begin{enumerate}
		\item\label{est-df} There exists a positive constant $C(A_1,R,\d,\mu,n)$ such that for any $u\in T_x M$ with $|u|=1$,
		\begin{align*}
		\left| |dF_x(u)|^2-1 \right| \leq C(n,\mu,\d,R,A_1)\e^\frac{1}{2(2n+1)}+\|\nabla^M r\|_\infty^2.
		\end{align*}
		\item\label{est-grad-r} If  $|M|^\frac{1}{n}\|B\|_q\leq A_2$ with $q>n$, then there exists a positive constant $\e_0(A_1,R,\d,n)$ such that $L^2$-pinching (\ref{l2-pinching}) with $0\le \e<\e_0$ implies that
		\begin{align*}
		\|\nabla^M r \|_\infty \leq C\e^\frac{q-n}{2(q-n+qn)},
		\end{align*}
		where $C=C(A_1,A_2,q,R,\d,\mu,n)$ is a universal constant.
	\end{enumerate}
\end{lem}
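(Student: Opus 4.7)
My plan is to address the two parts separately, drawing on Theorem \ref{main-theo-1} (the pointwise control on $|X|$) and on (M2) of Main Theorem (the $C^{1,\alpha}$-closeness of $B(p_0,R_0)$ to a ball of constant curvature $\delta$).

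For part (1), I begin from the formula for $|dF_x(u)|^2$ given by the preceding lemma. Because $u\in T_xM$, one has $\langle u,\nabla^N r\rangle=\langle u,\nabla^M r\rangle$, so that $|v|^2=1-\langle u,\nabla^M r\rangle^2\in[1-\|\nabla^M r\|_\infty^2,\,1]$. Setting $w:=d\exp_{p_0}^{-1}|_x(v)$, the vector $w$ is perpendicular to $Y:=\exp_{p_0}^{-1}(x)$ in $T_{p_0}N$, since $d\exp_{p_0}$ preserves radial directions. Consequently both $|v|$ and $|d\exp_{p_0}|_\varrho(w)|$ are magnitudes of Jacobi fields with the same perpendicular initial data, evaluated at times $r$ and $R_0$ respectively. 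By part (M2) of Main Theorem, such Jacobi fields agree with the constant-curvature-$\delta$ ones up to a multiplicative factor $1+\varkappa(\epsilon)$, while Theorem \ref{main-theo-1} furnishes $|s_\delta(r)/s_\delta(R_0)-1|\le C_1\epsilon^{1/(2(2n+1))}$. Chaining these estimates yields $|dF_x(u)|^2=|v|^2\bigl(1+O(\epsilon^{1/(2(2n+1))})\bigr)$, which together with the formula for $|v|^2$ gives the stated inequality.

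For part (2), I first convert the $L^2$-pinching of $X^\top$ from Lemma \ref{lem-l2-bound-perp} into an $L^2$-control on $\nabla^M r$: from $|X^\top|=s_\delta(r)|\nabla^M r|$ and the uniform lower bound on $s_\delta(r)$ provided by Theorem \ref{main-theo-1}, one obtains $\|\nabla^M r\|_2^2\le C(n,\delta,R,A_1)\,\epsilon$. To bootstrap to an $L^\infty$ bound, I would derive a Bochner-type differential inequality for $|\nabla^M r|^2$ of the schematic form
\[
\Delta^M|\nabla^M r|^2\ge -C\bigl(|B|^2\,|\nabla^M r|^2+|\nabla^M r|^2+\text{l.o.t.}\bigr),
\]
by writing $\nabla^M r=\nabla^N r-\langle\nabla^N r,\nu\rangle\nu$ and using Hessian comparison for $r$ together with the Gauss equation to express the intrinsic curvature of $M$ in terms of $B$ and $K_N$. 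Then a Nirenberg--Moser iteration based on the Hoffman--Spruck Sobolev inequality (Theorem \ref{theo-2.1}) absorbs the $|B|^2$-term via H\"older against $\|B\|_q$ with $q>n$; interpolating between the base $L^2$-bound and the output of the iteration produces the claimed exponent $(q-n)/(2(q-n+qn))$.

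The main obstacle lies in part (2): setting up the differential inequality cleanly while tracking the dependence on $B$ through the Gauss equation, and running the iteration so that the exit exponent matches the one stated. The technical balance is between the Moser-iteration ratio $n/(n-1)$ coming from Hoffman--Spruck and the cost of absorbing $\|B\|_q$; these combine to give the stated rate, which degenerates to $0$ as $q\searrow n$ and tends to $1/(2(n+1))$ as $q\to\infty$.
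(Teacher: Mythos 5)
Your proposal for part (1) has a genuine gap: you invoke (M2) of Main Theorem to argue that the Jacobi fields along radial geodesics ``agree with the constant-curvature-$\delta$ ones up to a multiplicative factor $1+\varkappa(\epsilon)$'', and then claim this chains with Theorem \ref{main-theo-1} to give $|dF_x(u)|^2=|v|^2\bigl(1+O(\epsilon^{1/(2(2n+1))})\bigr)$. But (M2) only provides a non-explicit modulus of continuity $\varkappa(\epsilon)$ --- its proof passes through a qualitative contradiction argument (Step~1 of the proof of (M2)), so there is no explicit power of $\epsilon$ in that closeness. Chaining it with Theorem \ref{main-theo-1} would only yield $|dF_x(u)|^2 = |v|^2(1+\varkappa(\epsilon))$, which is weaker than the stated bound with the explicit exponent $\tfrac{1}{2(2n+1)}$. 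The paper instead avoids (M2) entirely here: it runs the Rauch/Jacobi comparison directly, using the two-sided bound $\mu\le K_N\le\delta$ to sandwich $|J(s_1)/J(s_2)|$ between $s_\mu(s_1)/s_\mu(s_2)$ and $s_\delta(s_1)/s_\delta(s_2)$, then applies Theorem \ref{main-theo-1} to control $hs_\delta(r(x))$ to within $C\epsilon^{1/(2(2n+1))}$, with the Lipschitz constant of $s_\mu\circ s_\delta^{-1}$ giving the transfer between the two bounds. That route keeps the explicit rate.

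For part (2), your proposal outlines a Bochner-type second-order argument for $|\nabla^M r|^2$, which you yourself flag as the main obstacle and do not carry out. The paper's route is considerably more elementary: it iterates not on $|\nabla^M r|$ but on $\xi=|X^\top|=s_\delta(r)|\nabla^M r|$, and it never needs a Bochner identity. It only differentiates once, computing $|d(|\nabla^M r|^2)|$ pointwise via $|\nabla^M r|^2=1-\langle\nabla^N r,\nu\rangle^2$ and the Weingarten relation $\nabla^N_{e_i}\langle\nabla^N r,\nu\rangle=\Hess^N r(e_i,\nu)+B(e_i,\nabla^M r)$, which directly yields the bound $|d(|\nabla^M r|^2)|\le 2\sqrt{2n}\,\tfrac{c_\mu}{s_\mu}+2\sqrt 2\,|B|\,|\nabla^M r|$. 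Plugging $\xi^{2\alpha}$ into Hoffman--Spruck and splitting the $|B|$ term with H\"older against $\|B\|_q$ then gives the explicit iteration with $\tau=\tfrac{n}{n-1}\cdot\tfrac{q-1}{q}$ and limit exponent $2+2\gamma$, $\gamma=\tfrac{qn}{q-n}$, which reproduces the stated rate $\epsilon^{(q-n)/(2(q-n+qn))}$. Your starting $L^2$-bound is correct (it is indeed Lemma \ref{lem-l2-bound-perp} via $|X^\top|=s_\delta(r)|\nabla^M r|$ and the lower bound on $s_\delta(r)$), but the rest of your outline is a different, and substantially harder, program than what is needed; as written it does not establish the exponent.
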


\vspace{2mm}
\begin{proof}[Proof of Theorem \ref{main-theo-B}]
	~
	
	By Lemma \ref{lem-tranform-pinching}, we may assume (\ref{l2-pinching}).
	
	Combining (\ref{est-df}) and (\ref{est-grad-r}), we have
	\begin{align*}
	\left| |dF_x(u)|^2-1 \right| \leq C(n,\mu,\d,R,A_1)\e^\frac{1}{2(2n+1)}+C(n,q,\mu,\d,R,A_1,A_2)\e^\frac{q-n}{2(q-n+qn)}.
	\end{align*}
	By picking $\e_0=\e_0(n,q,\mu,\d,R,A_1,A_2)$ sufficiently small,  (\ref{ineq-almost-isom}) follows. Thus the map $F$ is a local diffeomorphism. Since $S(p_0,R_0)$ is simply connected for $n\geq 2$ and $M$ is closed, $F$ is a diffeomorphism.
	
	By Gauss equation, $M^n$ admits $L^{q/2}$-integral Ricci curvature bound with $q>n$. Since $dF$ has norm almost $1$, $M$ also is of $e^{\varkappa(\e)}$-non-collapsing at some scale  $r_0$ proportional to $R_0$. Therefore, by the $C^{\alpha}$-convergence Theorem \ref{thm-Calpha-convergence}, $M$ is $C^{\alpha}$-close to a round sphere of constant curvature.
	
\end{proof}

What remains in this section is the proof of Lemma \ref{lem-almost-isometry}.

Compared to the corresponding situation in \cite{Grosjean-Roth2012}, we already obtain
a pointwise control on the norm of position vector (i.e., (\ref{main-theo-1.1})) in a large ball $B(p,R)$ via improved estimates, such that $M$ lies in the $\e$-tubular neighborhood of $S(p_0,R_0)$. Based on this fact, we apply Jacobi comparison to further improve \cite[Lemma 4.2]{Grosjean-Roth2012} into the form in Lemma \ref{lem-almost-isometry}.

\vspace{2mm}
\begin{proof}[Proof of (\ref{est-df}) in Lemma \ref{lem-almost-isometry}]
	~
	
	Let $Y=\exp_{p_0}^{-1}(x)$. Let $v=u-\langle u,\nabla^N r\rangle \nabla^N r$ and $V=d\exp_{p_0}^{-1}|_x(v)$, then $V\bot Y$. Let $c$ be the geodesic ray from $p$ through $x$. Note that $v\in T_x N$ such that $\langle v,\nabla^N r\rangle_x=0$. To estimate $|dF_x(u)|^2$, we define the Jacobi field as follows:
	$$\left\{
	\begin{aligned}
	&J''(t)+R(J(t),\dot{\g}(t))\dot{\g}(t)=0, \\
	&J(0)=0, \quad J'(0)=\frac{V}{|Y|}.
	\end{aligned}
	\right.
	$$
	Then the Jacobi field can be explicitly expressed by
	\begin{align*}
	J(t)=d\exp_{p_0}|_{t\frac{Y}{|Y|}}\(t\frac{V}{|Y|}\).
	\end{align*}
	Observe that
	\begin{align*}
	J(r(x))= &J(|Y|)=d\exp_{p_0}|_{Y}\(V\)=v,\\
	J(R_0)= &d\exp_{p_0}|_{R_0\frac{Y}{|Y|}}\(R_0\frac{V}{|Y|}\)=dF_x(u).
	\end{align*}
	Take $s_1=\min\{r(x),R_0\}$ and $s_2=\max\{r(x),R_0\}$. If $\d>0$, $M \subset B(p,R)\subset B(p_0,2R)$ implies that $r(x)\leq 2R\leq \frac{\pi}{4\sqrt{\d}}$. On the other hand, $h \geq \frac{1}{ s_\d(2R)}$ implies that $R_0= s_\d^{-1}(\frac{1}{h}) \leq 2R \leq \frac{\pi}{4\sqrt{\d}}$. Hence $0\leq s_1\leq s_2 \leq \frac{\pi}{4\sqrt{\d}}$ if $\d>0$. By the standard Jacobi field estimates on $N$ with $\mu \leq K_N \leq \d$, we have
	\begin{align*}
	\frac{ s_\mu(s_1)}{ s_\mu(s_2)}\leq \left|\frac{J(s_1)}{J(s_2)}\right| \leq \frac{s_\d(s_1)}{s_\d(s_2)},\quad \text{for} \quad 0< s_1 \leq s_2<\frac{\pi}{\sqrt{\d}}.
	\end{align*}
	\begin{enumerate}[(1)]
		\item If $r(x)\leq R_0$, then
		\begin{align*}
		\(\frac{ s_\mu(r(x))}{ s_\mu(R_0)} \)^2\leq \frac{|v|^2}{|dF_x(u)|^2} \leq \(\frac{s_\d(r(x))}{s_\d(R_0)}\)^2.
		\end{align*}
		\item If $r(x)> R_0$, then
		\begin{align*}
		\(\frac{ s_\mu(R_0)}{ s_\mu(r(x))} \)^2\leq \frac{|dF_x(u)|^2}{|v|^2} \leq \(\frac{s_\d(R_0)}{s_\d(r(x))}\)^2.
		\end{align*}
	\end{enumerate}
	Combining these two estimates, we obtain
	\begin{align*}
	\min \left\{\frac{1}{h s_\d(r(x))},\frac{ s_\mu(R_0)}{ s_\mu(r(x))} \right\}\leq \frac{|dF_x(u)|^2}{|v|^2} \leq \max \left\{\frac{1}{h s_\d(r(x))},\frac{ s_\mu(R_0)}{ s_\mu(r(x))} \right\}.
	\end{align*}
	By (\ref{main-theo-1.1}), we have
	$$
	1-C(n,\d,R,A_1)\e^{\frac{1}{2(2n+1)}} \leq h s_\d (r(x)) \leq 1+C(n,\d,R,A_1)\e^{\frac{1}{2(2n+1)}}, \quad \text{for all $x\in M$}.
	$$
	Note that the Lipschitz constant of $s_\mu\circ s_\d^{-1}$ depends only on $\mu$ and $\d$, we have
	\begin{align*}
	&\left| |dF_x(u)|^2-1 \right| \\
	\leq &\max\left\{ \left|\(1+C(n,\mu,\d,A_1,R)\e^\frac{1}{2(2n+1)}\)|v|^2-1\right|,\left|\(1-C(n,\mu,\d,A_1,R)\e^\frac{1}{2(2n+1)}\)|v|^2-1\right| \right\}\\
	\leq &C(n,\mu,\d,R,A_1)\e^\frac{1}{2(2n+1)}+\|\nabla^M r\|_\infty^2,
	\end{align*}
	where we have used the fact $1-|\nabla^M r|^2 \leq |u|^2-\langle u,\nabla^M r \rangle^2=|v|^2 \leq 1$.
\end{proof}

\vspace{2mm}
\begin{proof}[Proof of (\ref{est-grad-r}) in Lemma \ref{lem-almost-isometry}]
	~
	
	Let $\xi=|X^\top|$. Then $|\mrm{d} \xi^{2\a}|=2\a \xi^{2\a-1} c_\d |\nabla^M r|^2 + \a \xi^{2\a-2} s_\d^2 \left| \mrm{d}\(|\nabla^M r|^2\)\right|$. To estimate $\left|\mrm{d}|\nabla^M r|^2\right|$, we have
	\begin{align*}
	\left|\mrm{d}\(|\nabla^M r|^2\)\right|^2 =\left|\mrm{d}\langle \nabla^N r,\nu\rangle^2 \right|^2
	= & 4\langle \nabla^{N}r,\nu\rangle^2 \sum_{i=1}^{n}\left|\nabla^N_{e_i}\langle \nabla^N r,\nu\rangle \right|^2 \\
	= & 4\langle \nabla^{N}r,\nu\rangle^2 \sum_{i=1}^{n}\left|\Hess^N r(e_i,\nu)+B(e_i,\nabla^M r)\right|^2 \\
	\leq &8 \left[\sum_{i=1}^{n}|\Hess^N r(e_i,\nu)|^2+|B|^2|\nabla^M r|^2 \right].
	\end{align*}
	By Hessian comparison theorem for $K_N\geq \mu$, we have
	\begin{align*}
	\sum_{i=1}^{n}|\Hess^N r (e_i,\nu)|^2 \leq |\Hess^N r|^2 \leq n\(\frac{ c_\mu}{ s_\mu}\)^2.
	\end{align*}
	Thus we have
	\begin{align*}
	\left|\mrm{d}\(|\nabla^M r|^2\)\right| \leq 2\sqrt{2n}\(\frac{c_\mu}{ s_\mu}\)+2\sqrt{2}|B||\nabla^M r|,
	\end{align*}
	and hence
	\begin{align*}
	|\mrm{d} \xi^{2\a}| \leq 2\a \xi^{2\a-1}c_\d |\nabla^M r|^2+\a \xi^{2\a-2}s_\d^2\left[ 2\sqrt{2n}\frac{c_\mu}{s_\mu}+2\sqrt{2}|B||\nabla^M r| \right].
	\end{align*}
	By Rauch comparison theorem, we have $1\leq \frac{ s_\d(r)}{ s_\mu(r)} \leq \frac{s_\d(2R)}{ s_\mu(2R)}\leq C(\mu,\d,R)$. By (\ref{main-theo-1.1}), we take $\e_0=\e_0(n,\d,R,A_1)$ sufficiently small such that if (\ref{l2-pinching}) holds with $\e<\e_0$, then $\frac{1}{2h} \leq |X|\leq \frac{3}{2h}$. As $|\nabla^M r|\leq 1$, we deduce that
	\begin{align*}
	|\mrm{d} \xi^{2\a}| \leq &2\a C(n,\mu,\d,R) \xi^{2\a-2} \|X\|_\infty\left[(c_\d+c_{\mu})+ \|X\|_\infty |B|\right] \\
	\leq &2\a C(n,\mu,\d,R) \xi^{2\a-2} \frac{1}{h^2}\left[ ( c_\d+ c_\mu)h+|B| \right] \\
	\leq &2\a C(n,\mu,\d,R) \xi^{2\a-2} \frac{1}{h^2}\( \|H\|_\infty+|B| \),
	\end{align*}
	where we have used (\ref{ineq-2side-bound-ratio-mean-curv}), $ c_\d \leq \max\{1,c_\d(2R)\}$ and $ c_\mu \leq \max\{1,c_\mu(2R)\}$ in the last inequality. Applying Sobolev inequality (\ref{Hoffman-Spruck}) to $f=\xi^{2\a}$ with $\a\geq 1$, we have
	\begin{align*}
	\|\xi\|_{\frac{2\a n}{n-1}}^{2\a} \leq & C(n)|M|^\frac{1}{n}\left[\frac{1}{|M|}\int_M \(|\mrm{d}\xi^{2\a}|+|H|\xi^{2\a}\)\right] \\
	\leq &C(n,\mu,\d,R)|M|^\frac{1}{n} 2\a \frac{1}{h^2}\left[\frac{1}{|M|}\int_M \(\|H\|_\infty+|B|\)\xi^{2\a-2}\right]\\
	\leq &\frac{C(n,\mu,\d,R)}{h^2}2\a|M|^\frac{1}{n}\left[ \|H\|_\infty \|\xi\|_{2\a-2}^{2\a-2}+\|B\|_q \|\xi\|_{\frac{(2\a-2)q}{q-1}}^{2\a-2}\right] \\
	\leq &\frac{C(n,\mu,\d,R,A_1,A_2)}{h^2}2\a \|\xi\|_{\frac{(2\a-2)q}{q-1}}^{2\a-2}.
	\end{align*}
	We take $\tau:=\frac{n}{n-1}\cdot \frac{q-1}{q}$, $\a:=\frac{1}{2}\frac{q-1}{q}c_i+1$, where $c_{i+1}=c_i \tau+\frac{2n}{n-1}$ and $c_0=2$, then
	$$
	c_i=\(2+2\g\)\tau^{i}-2\g\geq 2,  \quad \text{for $i=0,1,\cdots$,}
	$$
	where $\g=\frac{qn}{q-n}$. Since $\frac{2\a n}{n-1}=\(\frac{q-1}{q}c_p+2\)\frac{n}{n-1}=c_i \tau+\frac{2n}{n-1}=c_{i+1}$, we obtain
	\begin{align*}
	\|\xi\|_{c_{i+1}}^{\frac{c_{i+1}}{\tau}}\leq \left[\frac{C(n,\mu,\d,R,A_1,A_2)}{h^2}c_p\right]^\frac{q}{q-1} \|\xi\|_{\frac{(2\a-2)q}{q-1}}^\frac{(2\a-2)q}{q-1}= \left[\frac{C(n,\mu,\d,R,A_1,A_2)}{h^2}c_i\right]^\frac{q}{q-1} \|\xi\|_{c_i}^{c_i}.
	\end{align*}
	and
	\begin{align*}
	\|\xi\|_{c_{i+1}}^{\frac{c_{i+1}}{\tau^{i+1}}}\leq \left[\frac{C(n,\mu,\d,R,A_1,A_2)}{h^2}c_i\right]^{\frac{q}{(q-1)\tau^i}} \|\xi\|_{c_i}^{\frac{c_i}{\tau^i}}.
	\end{align*}
	By iteration, we get
	\begin{align*}
	\|\xi\|_{c_{i+1}}^{\frac{c_{i+1}}{\tau^{i+1}}} \leq \(\prod_{k=0}^{i} c_k^{\frac{1}{\tau^k}}\)^\frac{n}{n-1}\left[ \frac{C(n,\mu,\d,R,A_1,A_2)}{h^2} \right]^{\g\(1-\frac{1}{\tau^{i+1}}\)} \|\xi\|_{c_0}^{c_0}.
	\end{align*}
	Note that $\prod_{k=0}^{\infty} c_k^{\frac{1}{\tau^k}}$ converges to a constant $C(q,n)$ by the ratio test. Since $\frac{c_i}{\tau^i}$ converges to $2+2\g$, together with  (\ref{ineq-2side-bound-ratio-mean-curv}) and (\ref{l2-bound-perp-position}), we have
	\begin{align*}
	\|\xi\|_\infty^{2+2\g} \leq \left[ C(n,q,\mu,\d,R,A_1,A_2)\frac{1}{h^2} \right]^{\g}\|\xi\|_2^2 \leq \frac{C(n,q,\mu,\d,R,A_1,A_2)}{h^{2+2\g}}\e.
	\end{align*}
	As we have $|X|\geq \frac{1}{2h}$, we finally get
	\begin{align*}
	\|\nabla^M r\|_\infty \leq 2h\|\psi\|_\infty \leq C(n,q,\mu,\d,R,A_1,A_2) \e^\frac{1}{2+2\g}.
	\end{align*}
\end{proof}

\section{Examples supporting Conjecture \ref{conj-main}}\label{sec:6}

Via gluing operation between spheres, we construct a family of surfaces $M_\e$, such that for any $q\ge n$, $|M|^{\frac{1}{n}}\|B\|_q$ blows up as $\epsilon\to 0$, but they cannot appear in Main Theorem. 

Let $X_\e: M_\e \ra \R^{n+1}$ be the isometric embedding of $M_\e$. We denote by $H_\e$, $B_\e$ the mean curvature and the 2nd fundamental form of $M_\e$, respectively.

\begin{example}\label{counter-example}
	Let $p$ be a fixed integer. There exists a family of embedded hypersurfaces $\{M_\e(l;p)\}$ of $\R^{n+1}$ which are formed by $p$ spheres with radii close to $1$ by connected sum around $1\le l\le (\log (\epsilon^{-1}))^{\frac{1}{2}}$ points, such that for $\e \ra 0$, the 2nd fundamental form satisfies
	\begin{align}\label{7.1}
	\|B_\e\|_q \to 0 \; (0< q<n), \quad \|B_\e\|_q \ra \infty\; (q>n),  \quad 
	 C_1(n)l\le \|B_\e\|_n \leq C_2(n)l,
	\end{align}
while the mean curvature and first eigenvalue
\begin{align}\label{7.2}
\|H_\e\|_\infty \ra 1, \quad 	\||X_\e|-1\|_\infty\ra 0, \quad \l_1(M_\e(l;p)) \ra 0.
\end{align} 
\end{example}
From an intrinsic viewpoint, the Gromov-Hausdorff limit of $M_\epsilon((\log (\epsilon^{-1}))^{\frac{1}{2}};p)$ is the union of $p$ unit spheres touching each other at infinitely countable many points. 

Example \ref{counter-example} partially supports Conjecture \ref{conj-main} in the sense that $M$ in Main Theorem may not contain any small neck in more general cases.

\begin{rem}
	A family of hypersurfaces glued by spheres in $\mathbb{R}^{n+1}$ whose mean curvature $\|H_\e\|_\infty\le C(n)$ and $\|H_\e\|_2\to 1$ for some fixed $l$ was given in \cite{Aubry-Grosjean} (for detailed construction, see its arXiv version). 	
	Example \ref{counter-example}  improves their examples not only
	to that $\|H_\epsilon\|_\infty\to 1$, and also to the case $l\to \infty$.
	
	In order to keep $\|H_\epsilon\|_\infty\le 1+\e$, we use a catenoid in a suitable scale for the neck of connected sum, which is almost flat near their boundary.
	
	If $l$ is fixed, then by \cite{Aubry-Grosjean} it can be seen that the spectrum of $M_\e$ converges to the disjoint union of spectrums of $p$ unit spheres. This may be still true in Example \ref{counter-example}. Since only $\lambda_1$ is concerned in this paper, instead of the spectrum we will give a direct upper bound of $\lambda_1(M_\epsilon)$ by a suitably chosen test function.
\end{rem}

The construction of Example \ref{counter-example} is as follows. Let us first consider the case $l=1$ and $p=2$. Since the construction can be repeated successively, more general case can be similarly constructed.

Let $S_\e^{+}$ (resp. $S_\e^{-}$) be a compact surface with boundary,
$$x_{n+1}^2=(r_0^{\pm})^2-(r-3\e)^2, \text{where}~ 3\e  \leq r \leq 3\e+r_0^{\pm},$$
where $r_0^{\pm}$ are two constants in $[1-\e,1+\e]$, and $r=\sqrt{x_1^2+x_2^2+\cdots+x_{n}^2}$ is the distance to $x_{n+1}$-axis.
By definition, $S_\e^{\pm}$ is almost isometric to the unit sphere centered at the origin $o$ with a small spherical cap removed.

We join $S_\e^{\pm}$ by a catenoid $N_\e$ such that the $r$-position of $N_\e$ lies in $[\e^2,\e]$, the gluing part from $N_\e$ to $S_\e^{\pm}$ happens at $r\in [\e,3\e]$; see Figure \ref{fig:2} below.
\begin{figure}[htbp]
	\centering
	\includegraphics[width=0.9\textwidth]{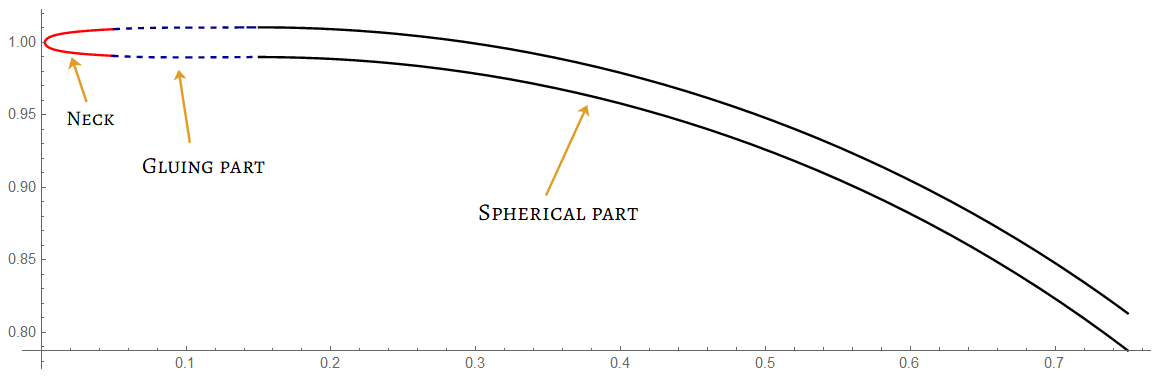}
	\caption{}
	\label{fig:2}
\end{figure}

The $n$-dimensional catenoid in $\mathbb R^{n+1}$ is defined in \cite{doCarmo-Dajczer1983} (cf. \cite{Tam-Zhou2009}). We choose a catenoid fitting in our setting given by
$$x_{n+1}=1\pm\e^2 \psi\(\frac{r}{\e^2}\), \quad \text{where}~\e^2 \leq r \leq \e,$$
and $$\psi(r)=\int_1^{r} \frac{d \tau}{\sqrt{\tau^{2(n-1)}-1}}.$$
When $n=2$, $\psi=\operatorname{arccosh}$.

In the gluing part, we choose polynomials $\rho_{\e,1}^{\pm}$ and $\rho_{\e,2}^{\pm}$, by which the rotational hypersurface smoothly changes from the catenoid to a horizontal hyperplane $P^\pm=\{x_{n+1}=1\pm a_0\}$ for some $a_0=O(\e)$ in $[\e,2\e]$, and then to $S_\e^{\pm}$. They can be chosen as follows
\begin{align*}
x_{3}=&1 \pm a_1 (r-2\e)^3 \pm a_2 (r-2\e)^4 \pm a_0,   \quad \text{for}~\e   \leq r \leq 2\e;\\
x_{3}=&1+b_1^{\pm} (r-2\e)^3+b_2^{\pm} (r-2\e)^4 \pm a_0,   \quad \text{for}~2\e  \leq r \leq 3\e,
\end{align*}
where $a_1$, $a_2$ (resp. $b_1^{\pm}$ and $b_2^{\pm}$) and $a_0$ are determined by the value and derivatives of catenoid $N_\e$ (resp. $S_\e^\pm$) function at $r=\e$ (resp. $r=3\e$) and vanishing $C^1$ and $C^2$ derivatives at $r=2\e$ for the hyperplane. Then $a_0$ is determined by the catenoid $N_\e$, and thus the radius $r_0^\pm$ of $S_\e^{\pm}$ is also determined after $a_0$.

Note that the above operation is based on the fact that the catenoid $N_\e$ can be arbitrary $C^2$-close to a horizontal hyperplane as $\e\to 0$.

Up to an appropriate smoothing, we obtain a family of smooth surfaces $M_\e$ in $\R^{n+1}$. What remains is to verify the conclusion in Example \ref{counter-example}.

Clearly, on $N_\e$ the mean curvature vanishes. By direct calculation, at the gluing part the mean curvature $H_\e$ smoothly varies from $0$ to $\frac{1}{2}+\epsilon$. Note that $S_\e^{\pm}$ is by a $3\e$-shift from a standard sphere, the mean curvature of $S_\e^{\pm}$ is close to $\frac{1}{2}$ at $r=3\e$.  At the $S_\e^{\pm}$ part $r\in [3\e,3\e+r_0^\pm]$, $H_\e=1-\frac{3\e}{2r}+O(\e^2)$. Hence $\|H_\e\|_\infty\to 1$ as $\e\to 0$.

Let $M_\e$ be the hypersurface constructed as above. We now verify $\lambda_1(M_\e)\to 0$. For the bound on the 2nd fundamental form, see the appendix.

We will choose a suitable test function. Let $d(\cdot)$ be the distance function on $M_\e$ to the set $E_\e=\{r=\e^2,x_{n+1}=1\}$.
Then $M_\e$ is split into two parts $M_\e^\pm$ along $E_\e$, and $M_\e^+$ lies outside of $M_\e^-$.

Let us consider a cut-off function from \cite[Propsition 1.3.1]{Courtois1987}, which is defined by
\begin{equation}\label{cut-off-function}
\chi_\e (s)=\left\{ \begin{aligned}
&0, \quad  & 0\leq s \leq \e;\\
&-\frac{2}{\log \e}\log(\frac{s}{\e}), \quad &\e \leq s \leq \sqrt{\e};\\
&1, \quad &s \geq \sqrt{\e}.
\end{aligned} \right.
\end{equation}
The test function is defined to be $f(x):=f_0(x) \cdot \chi_{\e}(d(x))$, where
$$
f_0(x):=\left\{ \begin{aligned}
&1, \quad  &x\in M_\e^{+},\\
&-1,   \quad  &x \in  M_\e^{-}.
\end{aligned} \right.
$$

By definition, $f$ is a smooth function on $M_\e$ such that $f\equiv 1$ on $ M_e^{+} \cap \{d\geq \sqrt{\e}\}$ and $f\equiv -1$ on $M_e^{-} \cap \{d\geq \sqrt{\e}\}$. As the area of $\{d\leq \sqrt{\e}\}$ tends to $0$ as $\e \to 0$, we have
\begin{align*}
\overline{f}=\frac{1}{|M_\e|}\int_{M_\e} f \to 0, \qquad \text{as $\e \to 0$}.
\end{align*}
So we have
\begin{align*}
\int_{M_\e}(f-\overline{f})^2 \to 2\omega_n, \qquad \text{as $\e \to 0$}.
\end{align*}
On the other hand, we have
$$
d(x)=\int_{\e^2}^{r(x)} \sqrt{1+\(\frac{dx_{n+1}}{dr}\)^2}dr  \geq  r(x)-\e^2,
$$
where we recall that $r(x)$ is the Euclidean distance from $x$ to the $x_{n+1}$-axis. Hence, on $\{\e\leq d \leq \sqrt{\e} \}$ we have
$$
r(x) \leq (1+\e)d(x).
$$
It follows that for $n\geq 2$,
\begin{equation}\label{ineq-est-df}
\begin{aligned}
\int_{M_\e}|df|^2 \leq &\frac{8 \omega_{n-1}}{(\log \e)^2}\int_{\e}^{\sqrt{\e}} \frac{r^{n-1}}{s^2}ds 
\\
\leq &\frac{8\omega_{n-1} (1+\e)^{n-1}}{(\log \e)^2}\int_{\e}^{\sqrt{\e}} s^{n-3}ds \to 0, \quad \text{as $\e \ra 0$}.
\end{aligned}
\end{equation}

By the variational characterization of $\l_1(M_\e)$, we conclude
\begin{align*}
\l_1(M_\e)\le \frac{\int_{M_\e}|df|^2}{\int_{M_\e}(f-\overline{f})^2}\to 0.
\end{align*}

Let us consider the case for $1\le l\le (-\log \epsilon)^{1/2}$ and any fixed $p$. By comparing the volume, $O(\epsilon^{-n})$-many necks can be constructed between any two spheres. Then the mean curvature, position vector remains the same as above. By \S \ref{subsec-2nd-fund}, $C_1(n)l\le \|B_\epsilon\|_n\le C_2(n)l$, and 	$\|B_\e\|_q \to 0 \; (0< q<n)$.

By direct calculation, it is easy to see that after multiplying $(-\log \epsilon)^{1/2}$ to (\ref{ineq-est-df}), it still tends to $0$ as $\epsilon\to 0$. Hence $\lambda_1(M_\e)\to 0$.

\section{Appendix}

\subsection{Proofs of technical Lemmas}

\begin{proof}[Proof of Lemma \ref{lem-infity-bound-position}]
	We take $\varphi=|X|=s_\d$ to make a right iteration function.
	For any $\alpha\in \mathbb R$,  $$|\mrm{d}\varphi^{2\a}|\leq 2\a\varphi^{2\a-1} c_\d.$$
	We claim that:
	\begin{claim}
		for all $\d\in\R$,
		$$
		|\mrm{d}\varphi^{2\a}|\leq 2(\sqrt{2}+1)\a \varphi^{2\a-1}\|H\|_\infty\|\varphi\|_\infty.
		$$
	\end{claim}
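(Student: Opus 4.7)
The plan is to bound $c_\delta$ pointwise by $\|H\|_\infty \|\varphi\|_\infty$ (up to a universal constant), and then combine with the already-noted inequality $|d\varphi^{2\alpha}| \leq 2\alpha \varphi^{2\alpha-1} c_\delta$, which follows from $\varphi = s_\delta(r)$, $|\nabla^M r| \leq 1$ and $s_\delta' = c_\delta$.

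First, I would use the identity $c_\delta^2 + \delta s_\delta^2 = 1$ to write
\[
c_\delta^2 \leq 1 + \max\{-\delta,0\}\, \varphi^2 \leq 1 + |\delta|\, \|\varphi\|_\infty^2.
\]
By Lemma~\ref{lem-bound-mean-curv}, $\|H\|_\infty^2 \geq |\delta|$ in both signs of $\delta$, so this already gives
\[
c_\delta^2 \leq 1 + \|H\|_\infty^2\,\|\varphi\|_\infty^2,
\qquad\text{hence}\qquad
c_\delta \leq 1 + \|H\|_\infty\,\|\varphi\|_\infty.
\]

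Next, to absorb the stray $1$, I would invoke the $L^2$ Heintze--Reilly bound \eqref{4.5}, $1 \leq (\delta + \|H\|_\infty^2)\|X\|_2^2$, together with $\|X\|_2 \leq \|X\|_\infty = \|\varphi\|_\infty$ and the easy observation (using again $\|H\|_\infty^2 \geq |\delta|$ from Lemma~\ref{lem-bound-mean-curv}, which in the case $\delta\geq 0$ is equivalent to $\delta \leq \|H\|_\infty^2$) that $\delta + \|H\|_\infty^2 \leq 2\|H\|_\infty^2$. This yields
\[
1 \leq 2\,\|H\|_\infty^2\,\|\varphi\|_\infty^2,
\qquad\text{i.e.,}\qquad
1 \leq \sqrt{2}\,\|H\|_\infty\,\|\varphi\|_\infty.
\]
Combining the two displays,
\[
c_\delta \leq (\sqrt{2}+1)\,\|H\|_\infty\,\|\varphi\|_\infty,
\]
and plugging into $|d\varphi^{2\alpha}| \leq 2\alpha\varphi^{2\alpha-1}c_\delta$ gives the claim.

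I do not anticipate any real obstacle here; the only subtle point is ensuring that the constant is uniform across signs of $\delta$, which is exactly what the universal lower bound $\|H\|_\infty \geq \sqrt{|\delta|}$ from Lemma~\ref{lem-bound-mean-curv} (requiring the extrinsic ball assumption $R \leq \tfrac{\pi}{8\sqrt{\delta}}$ when $\delta>0$) guarantees. Once the claim is established, the rest of Lemma~\ref{lem-infity-bound-position} proceeds by a standard Nirenberg--Moser iteration using the Hoffman--Spruck Sobolev inequality \eqref{Hoffman-Spruck} applied to $f = \varphi^{2\alpha}$, where the bound $|M|^{1/n}\|H\|_\infty \leq A_1$ controls the volume factor and the term $|H|f$, producing the final estimate $\|X\|_\infty \leq C(n) A_1^{n/2}\|X\|_2$.
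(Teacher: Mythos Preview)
Your proposal is correct and follows essentially the same argument as the paper: both use \eqref{4.5} together with $\|H\|_\infty^2\ge|\delta|$ from Lemma~\ref{lem-bound-mean-curv} to obtain $1\le\sqrt{2}\,\|H\|_\infty\|\varphi\|_\infty$, and both bound $c_\delta\le 1+\|H\|_\infty\|\varphi\|_\infty$ before combining. The only cosmetic difference is that the paper splits into the cases $\delta\ge 0$ (where $c_\delta\le 1$ directly) and $\delta<0$ (where $c_\delta=\sqrt{1+|\delta|s_\delta^2}\le 1+\sqrt{|\delta|}\,\|\varphi\|_\infty$), whereas you handle both signs at once via $c_\delta^2\le 1+|\delta|\,\|\varphi\|_\infty^2$; the ingredients and the resulting constant $\sqrt{2}+1$ are identical.
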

	Indeed, by (\ref{4.5}) and (\ref{lower-bound-mean-curv})
	$$
	1\leq (\d+\|H\|_\infty^2)\|X\|_2^2\leq (|\d|+\|H\|_\infty^2)\|X\|_2^2 \leq 2\|H\|_\infty^2 \|\varphi\|_\infty^2.
	$$
	Then the claim is true for $\d \ge 0$, for $c_\d\le 2\|H\|_\infty^2 \|\varphi\|_\infty^2$.
	So is for $\d<0$, because by (\ref{lower-bound-mean-curv}) one has
	\begin{align*}
	|\mrm{d}\varphi^{2\a}|\leq &2\a\varphi^{2\a-1}\sqrt{1-\d  s_\d^2}
	\leq 2\a(1+\sqrt{|\d|}\|\varphi\|_\infty)\varphi^{2\a-1}
	\leq 2\a(1+\|H\|_\infty\|\varphi\|_\infty)\varphi^{2\a-1}.
	\end{align*}

	Now by applying Sobolev inequality (\ref{Hoffman-Spruck}) to $f=\varphi^{2\a}$ with $\a\geq 1$, one has
	\begin{align*}
	\|\varphi\|_{\frac{2\a n}{n-1}}^{2\a}\leq & C(n)|M|^\frac{1}{n}\left[(2(\sqrt{2}+1)\a+1)\|H\|_\infty\|\varphi\|_\infty\|\varphi\|_{2\a-1}^{2\a-1} \right]\\
	\leq & C(n)2\a \(|M|^\frac{1}{n}\|H\|_\infty\) \|\varphi\|_\infty\|\varphi\|_{2\a-1}^{2\a-1}.
	\end{align*}
	We take $\tau=\frac{n}{n-1}$ and $\a=\frac{a_i+1}{2}$, where $a_{i+1}=(a_i+1)\tau$ and $a_0=2$, then
	$$
	a_{i}=(2+n)\tau^{i}-n\geq 2, \quad \text{for $i=0,1,\cdots$,}
	$$
	and $\frac{2\a n}{n-1}=2\a \tau=(a_i+1)\tau=a_{i+1}$. Then we obtain
	\begin{align*}
	\|\varphi\|_{a_{i+1}}^{\frac{a_{i+1}}{\tau}} &\leq  C(n)\(|M|^\frac{1}{n}\|H\|_\infty\) (a_i+1) \|\varphi\|_\infty\|\varphi\|_{2\a-1}^{2\a-1}\\
	&\leq C(n)\(|M|^\frac{1}{n}\|H\|_\infty\) a_i \|\varphi\|_\infty\|\varphi\|_{a_i}^{a_i}.
	\end{align*}
	By iteration, we have
	\begin{align*}
	\|\varphi\|_{a_{i+1}}^{\frac{a_{i+1}}{\tau^{i+1}}} &\leq \left[C(n)\(|M|^\frac{1}{n}\|H\|_\infty\) a_i \|\varphi\|_\infty\right]^{\frac{1}{\tau^i}}\|\varphi\|_{a_i}^{\frac{a_i}{\tau^i}} \\
	&\leq \(\prod_{k=0}^{i} a_k^{\frac{1}{\tau^k}}\)\left[ C(n)\(|M|^\frac{1}{n}\|H\|_\infty\)  \|\varphi\|_\infty\right]^{n\(1-\frac{1}{\tau^{i+1}}\)}\|\varphi\|_{a_0}^{a_0}.
	\end{align*}
	Since
	\begin{align*}
	\log\(\prod_{k=0}^{\infty} a_k^{\frac{1}{\tau^k}}\)=\sum_{k=0}^{\infty}\frac{\log a_k}{\tau^k},
	\end{align*}
	which by ratio test $\lim_{k\ra \infty}\left|\frac{\log a_{k+1}}{\tau^{k+1}}\frac{\tau^k}{\log a_k} \right|=\frac{n-1}{n}<1$, the series $\prod_{k=0}^{\infty} a_k^{\frac{1}{\tau^k}}$ converges to a constant $C(n)$. Combining with the fact $\frac{a_i}{\tau^i}$ converges to $2+n$, we obtain
	\begin{align*}
	\|\varphi\|_\infty^{2+n} \leq C(n)\(|M|^\frac{1}{n}\|H\|_\infty\)^n \|\varphi\|_\infty^n \|\varphi\|_2^2\leq C(n)A_1^n \|\varphi\|_\infty^n \|\varphi\|_2^2,
	\end{align*}
	which gives $\|X\|_\infty\leq C(n)A_1^{\frac{n}{2}} \|X\|_2$.
\end{proof}

\vspace{2mm}
\begin{proof}[Proof of (\ref{ineq-upper-bound-Y}) in Lemma \ref{lem-estimate-psi-1}]
	~
	
	By (\ref{3.1}) and $\d s_\d^2+c_\d^2=1$, we have
	\begin{equation}\label{ineq-inner-product}
	\begin{split}
	-\frac{1}{|M|}\int_M c_\d H \langle X,\nu\rangle\leq &\frac{1}{|M|}\int_M c_\d \(\frac{1}{n}\divv_M X^\top-c_\d\)=\frac{\d}{n}\|X^\top\|_2^2-1+\d\|X\|_2^2.
	\end{split}
	\end{equation}
	Then by $L^2$-pinching (\ref{l2-pinching}), (\ref{l2-bound-perp-position}) and (\ref{ineq-2side-bound-ratio-mean-curv}), we have
	\begin{align*}
	\|Y\|_2^2=&\frac{1}{|M|}\int_M  \(H^2 c_\d^2-2\|H\|_\infty^2 \int_M c_\d H \langle X,\nu\rangle+\|H\|_\infty^4 |X|^2 \) \\
	\leq & \frac{2\d\|H\|_\infty^2}{n}\|X^\top\|_2^2-\|H\|_\infty^2(1-\d\|X\|_2^2)+\|H\|_\infty^4 \|X\|_2^2 \\
	\leq & \frac{4|\d|}{n}\e+\|H\|_\infty^2\left[ (\d+\|H\|_\infty^2)\|X\|_2^2-1\right] \\
	\leq & C(n)\|H\|_\infty^2\e \\
	\leq &C(n,\d,R)h^2\e.
	\end{align*}
\end{proof}

In order to prove (\ref{ineq-upper-bound-W-final}) for $\|W\|_2$, let us make some preparation.

\begin{lem}
	\begin{align}
	&\|W\|_2^2 \leq C(n) \(\|H\|^2_\infty\|X\|_\infty+h\)\e \label{ineq-est-W}
	\end{align}
\end{lem}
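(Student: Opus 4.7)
\medskip
\noindent\textbf{Proof proposal.}
The plan is to expand $|W|^2$ directly, apply the integrated Heintze inequality (3.1)--(3.4) to handle the cross term, and then exploit the identity $h^2=\delta+\|H\|_\infty^2$ together with the $L^2$-pinching $1\le h^2\|X\|_2^2\le 1+\varepsilon$ and the already-known bounds $\|X^\top\|_2^2\le 2\varepsilon/\|H\|_\infty^2$ and $\|Y\|_2^2\le C(n,\delta,R)h^2\varepsilon$ to collect cancellations.

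Concretely, squaring term-by-term and using $c_\delta^2+\delta s_\delta^2=1$ gives
$$|W|^2=|X|(\delta|X|-h)^2+H^2c_\delta^2|X|+2Hc_\delta(\delta|X|-h)\langle X,\nu\rangle,$$
so integrating and splitting the last factor into the $\delta|X|$-part and the $-h$-part produces a cross term $-2h\int_M Hc_\delta\langle X,\nu\rangle$. By (3.1) and the divergence theorem one has $\int_M Hc_\delta\langle X,\nu\rangle\ge \int_M c_\delta^2-\tfrac{\delta}{n}\int_M|X^\top|^2$, with $\int_M c_\delta^2=|M|-\delta\int_M|X|^2$. Substituting this upper bound and using Lemma~4.2 yields a cancellation between the term $-2\delta h\int_M|X|^2$ coming from $\int_M|X|(\delta|X|-h)^2$ and the term $+2\delta h\int_M|X|^2$ produced by the Heintze substitution.

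After this cancellation the expression reduces to an absolute term of the form $h^2\int_M|X|-2h|M|$ plus remainders that are manifestly $O(\varepsilon)$. I would dominate the absolute term using Cauchy--Schwarz and pinching: $h\int_M|X|\le |M|\sqrt{1+\varepsilon}\le |M|(1+\varepsilon/2)$, so $h^2\int_M|X|-2h|M|\le h|M|(\varepsilon/2-1)\le 0$ for small $\varepsilon$, hence contributes only negatively. The surviving positive terms are: (i) $\delta^2\int_M|X|^3$ and $\int_M H^2c_\delta^2|X|$, which are bounded by $C\|X\|_\infty\int_M|X|^2\le C\|X\|_\infty(1+\varepsilon)|M|/h^2$ and hence produce the $\|H\|_\infty^2\|X\|_\infty\varepsilon$ contribution; (ii) $2\delta\int_M Hc_\delta|X|\langle X,\nu\rangle$, bounded by $C|\delta|\|H\|_\infty\|X\|_\infty|M|/h^2$; and (iii) the Heintze remainder $(2h\delta/n)|M|\|X^\top\|_2^2\le C(n,\delta,R)h|M|\varepsilon$, which produces the $h\varepsilon$ contribution.

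An alternative route, slightly cleaner conceptually, is to use the identity $\delta X+Hc_\delta\nu-hX/|X|=h(h|X|-1)X/|X|-Y$, which gives
$$|W|^2=h^2(h|X|-1)^2|X|+|X||Y|^2-2h(h|X|-1)\langle X,Y\rangle,$$
so by the triangle inequality $|W|^2\le 2h^2(h|X|-1)^2|X|+2|X||Y|^2$. The second piece is bounded at once by $\|X\|_\infty\|Y\|_2^2|M|\le C\|H\|_\infty^2\|X\|_\infty\varepsilon|M|$ via (5.10), and the task reduces to showing $\int_M(h|X|-1)^2|X|\le C\varepsilon|M|/h$. The main obstacle is this last estimate: the $L^2$-pinching alone only gives $h^2\int_M|X|^2-2h\int_M|X|+|M|\le(2+\varepsilon)|M|-2h\int_M|X|$, and bounding the ``negative'' contribution $-2h\int_M|X|$ from above requires a lower bound on $\int_M|X|$. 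I would produce such a lower bound precisely from Heintze's inequality: combining $\int_M Hc_\delta\langle X,\nu\rangle\ge\int_M c_\delta^2-\tfrac{\delta}{n}\int_M|X^\top|^2$ with $Hc_\delta\langle X,\nu\rangle\le \|H\|_\infty c_\delta|X|$ gives $\|H\|_\infty\int_M c_\delta|X|\ge |M|-\delta|M|\|X\|_2^2-(\delta/n)|M|\|X^\top\|_2^2$, which, together with $c_\delta$ being comparable to $c_\delta(R_0)=\|H\|_\infty/h$ (via the Rauch bound $c_\delta\le C(\delta,R)$), converts into the desired lower bound on $\int_M|X|$, closing the argument.
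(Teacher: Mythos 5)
Both routes contain a genuine gap, although the structural observations (the expansion of $|W|^2$ and the identity $\delta X+Hc_\delta\nu-hX/|X|=h(h|X|-1)X/|X|-Y$) are correct.

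\textbf{Route 1.} After the $\pm 2\delta h\int_M|X|^2$ cancellation you are left with
\[
\delta^2\int_M|X|^3 + \int_M H^2c_\delta^2|X| + 2\delta\int_M Hc_\delta|X|\langle X,\nu\rangle + \Bigl(h^2\int_M|X| - 2h|M|\Bigr) + \frac{2h\delta}{n}\int_M|X^\top|^2.
\]
You drop the nonpositive absolute term and assert the first three terms are $O(\varepsilon)$. They are not: along $M$, $c_\delta$ is of size $\|H\|_\infty/h$ and $\int_M|X|$ is of size $|M|/h$, so $\int_M H^2c_\delta^2|X|$ is of order $h|M|$, and the bound $C\|X\|_\infty(1+\varepsilon)|M|/h^2$ you write is of order $\|H\|_\infty^2\|X\|_\infty|M|$, missing the crucial factor $\varepsilon$. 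The paper's proof is in fact structurally your Route 1, but it inserts the algebraic identity (from $c_\delta^2+\delta s_\delta^2=1$)
\[
|\delta X+Hc_\delta\nu|^2=(\delta+H^2)-\delta\,|HX-c_\delta\nu|^2,
\]
so those three terms equal $\int_M|X|(\delta+H^2)-\delta\int_M|X||HX-c_\delta\nu|^2$. The large piece $\int_M|X|(\delta+H^2)\le h^2\|X\|_1|M|\le h^2\|X\|_2|M|$ must be \emph{kept} and paired with the negative absolute term: together they give $2h|M|(\sqrt{1+\varepsilon}-1)\le h\varepsilon|M|$. The residual piece is bounded by $|\delta|\|X\|_\infty\int_M|HX-c_\delta\nu|^2$, and $\int_M|HX-c_\delta\nu|^2\le C(n)\varepsilon|M|$ follows from the same Heintze inequality. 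Discarding the absolute term as merely negative throws away exactly the cancellation that makes the remainder small.

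\textbf{Route 2.} The reduction to $\int_M(h|X|-1)^2|X|\le C\varepsilon|M|/h$, hence to the $L^1$ lower bound $h\int_M|X|\ge(1-C\varepsilon)|M|$, is the right way to frame what is missing, and you correctly observe that the $L^2$-pinching alone cannot give it. But your proposed derivation does not close: bounding $c_\delta$ pointwise by $C(\delta,R)$ and using Heintze gives only
\[
\frac{h}{|M|}\int_M|X|\ \ge\ \frac{\|H\|_\infty}{h\,C(\delta,R)}(1-C\varepsilon)\ =\ \frac{c_\delta(R_0)}{C(\delta,R)}(1-C\varepsilon),
\]
and $c_\delta(R_0)/C(\delta,R)$ is bounded away from $1$ independently of $\varepsilon$ (for $\delta\ge0$, $C(\delta,R)=1$ while $c_\delta(R_0)<1$; for $\delta<0$, $c_\delta(R_0)<c_\delta(2R)=C(\delta,R)$). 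Thus $\int_M(h|X|-1)^2$ retains an $O(1)$ remainder. The paper never needs an $L^1$ lower bound on $|X|$: it keeps both $h^2\|X\|_2$ contributions (from $h^2|X|$ in the expansion and from $\int_M|X|(\delta+H^2)$) and pairs both against $-2h$, using only the upper bound $\|X\|_1\le\|X\|_2\le\sqrt{1+\varepsilon}/h$.
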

\begin{proof}
	By (\ref{ineq-inner-product}) we have
	\begin{align*}
	\|W\|_2^2=&\frac{1}{|M|}\int_M  \(|X||\d X+H c_\d \nu|^2-2h\langle \d X+H c_\d\nu,X\rangle+h^2|X|\) \\
	\leq &\frac{1}{|M|}\int_M  |X||\d X+H c_\d \nu|^2 -2h\d \|X\|_2^2-\frac{2h}{|M|}\int_M  c_\d H\langle X,\nu\rangle +h^2\|X\|_2 \\
	\leq &\frac{1}{|M|}\int_M  |X||\d X+H c_\d \nu|^2 -2h\d \|X\|_2^2+\frac{2h\d}{n}\|X^\top\|_2^2-2h(1-\d\|X\|_2^2)+h^2\|X\|_2\\
	\leq & \frac{1}{|M|}\int_M  |X||\d X+H c_\d \nu|^2 -2h+\frac{4h}{n}\e+h^2\|X\|_2.
	\end{align*}
	To estimate the first term, we calculate
	\begin{align*}
	\frac{1}{|M|}\int_M  |X||\d X+H c_\d \nu|^2 =&\frac{1}{|M|}\int_M  |X|\(\d^2 |X|^2+2\d H\langle X,\nu\rangle  c_\d+H^2  c_\d^2\) \\
	= &\frac{1}{|M|}\int_M  |X|\left[ \d+H^2 -\d(H^2|X|^2+ c_\d^2-2 H\langle X,\nu\rangle  c_\d) \right] \\
	\leq &h^2 \|X\|_2-\frac{\d}{|M|}\int_M  |X| |HX- c_\d\nu|^2 \\
	\leq &h^2 \|X\|_2+\frac{|\d|\|X\|_\infty}{|M|}\int_M  |HX- c_\d\nu|^2,
	\end{align*}
	and
	\begin{align*}
	\frac{1}{|M|}\int_M  |HX- c_\d \nu|^2 = &\frac{1}{|M|}\int_M  \(H^2|X|^2-2 c_\d H\langle X,\nu\rangle+c_\d^2\)\\
	\leq & (\d+\|H\|_\infty^2)\|X\|_2^2-1+\frac{2\d}{n}\|X^\top\|_2^2\\
	\leq & \e+\frac{4|\d|}{n\|H\|_\infty^2}\e\leq C(n)\e.
	\end{align*}
	Finally, together with (\ref{lower-bound-mean-curv}), we obtain
	\begin{align*}
	\|W\|_2^2 \leq & C(n)|\d|  \|X\|_\infty\e-2h+\frac{4h}{n}\e+2h^2\|X\|_2. \\
	\leq &C(n)|\d| \|X\|_\infty\e+2h(\sqrt{1+\e}-1)+\frac{4h}{n}\e\\
	\leq &C(n) \(\|H\|^2_\infty\|X\|_\infty+h\)\e.
	\end{align*}
\end{proof}

We now are ready to bound $\|W\|_2$ as follows.
\vspace{2mm}
\begin{proof}[Proof of (\ref{ineq-upper-bound-W-final}) in Lemma \ref{lem-estimate-psi-1}]
	~
	
	By (\ref{ineq-upper-bound-infinity-position}), combining with (\ref{l2-pinching}) and (\ref{ineq-2side-bound-ratio-mean-curv}), we derive
	\begin{align*}
	\|H\|^2_\infty\|X\|_\infty \leq \|H\|_\infty^2 C(n)A_1^{\frac{n}{2}}\|X\|_2\leq C(n,\d,R)h^2 A_1^{\frac{n}{2}}\frac{\sqrt{1+\e}}{h} \leq C(n,\d,R,A_1)h.
	\end{align*}
	Then it follows from (\ref{ineq-est-W}) that
	\begin{align}\label{4.13}
	\|W\|_2^2 \leq C(n)\e(h+\|H\|_\infty^2 \|X\|_\infty) \leq C(n,\d,R,A_1)h \e.
	\end{align}
	
\end{proof}

\subsection{Bounds on 2nd fundamental form for Example \ref{counter-example}}\label{subsec-2nd-fund}

For simplicity, we only give the detailed formulation for $n=2$. The higher dimensional case can be similarly verified by following the construction in \S \ref{sec:6}.

Let $r$ be the Euclidean distance to the $z$-axis. By the construction in \S \ref{sec:6}, for $0<\e \ll 1$, $M=M_\e^{+}\cup M_\e^{-}$, and
$$M_\e^{\pm}:=\{(x,y,z)\in \mathbb R^3 ~|~x^2+y^2=r^2, z= \rho^{\pm}_\e(r), r\in [\epsilon^2,r_0^{\pm}+3\e]\},$$
is given by two $C^2$-functions $\rho_\e^{\pm}:[\e^2,1+3\e]\ra \R$ defined by
\begin{align*}
\rho^{\pm}_\e(r):=\left\{\begin{aligned}
&1\pm\e^2 \operatorname{arccosh}\(\frac{r}{\e^2}\), \quad &\text{if}~&\e^2 \leq r \leq \e;\\
&1 \pm a_1 (r-2\e)^3 \pm a_2 (r-2\e)^4 \pm a_0,   \quad &\text{if}~&\e   \leq r \leq 2\e;\\
&1+b_1^{\pm} (r-2\e)^3+b_2^{\pm} (r-2\e)^4 \pm a_0,   \quad &\text{if}~&2\e  \leq r \leq 3\e;\\
&\sqrt{(r_0^{\pm})^2-(r-3\e)^2},          \quad &\text{if}~&3\e  \leq r \leq 3\e+r_0^{\pm},
\end{aligned}
\right.
\end{align*}
where the coefficients $a_i$, $b_i^{\pm}$ and $r_0^\pm$ are
\begin{align*}
&a_1=\frac{2-3\e^2}{3\e(1-\e^2)^\frac{3}{2}}, \quad &a_2=&\frac{1-2\e^2}{4\e^2(1-\e^2)^\frac{3}{2}}, \quad &a_0=&a_1\e^3-a_2 \e^4+\e^2 \operatorname{arccosh}(\e^{-1}),\\
&b_1^{\pm}=\frac{1}{3\e r_0^\pm},           \quad &b_2^{\pm}=&-\frac{1}{4\e^2 r_0^\pm}, \quad &r_0^\pm=&\frac{1\pm a_0}{2}+\frac{1}{2}\sqrt{(1\pm a_0)^2+\frac{\e^2}{3}}
\end{align*}
We also have $r_0^{\pm}=1\mp \e^2 \log\e+(\e^2)$, and hence $r_0^{+}>1>r_0^{-}$ if $\e$ is sufficiently small.

In order to estimate 2nd fundamental form, the principal curvatures $\k_i^{+}$(resp. $\k_i^{-}$) of $M_\e^{+}$ (resp. $M_\e^{+}$) are calculated as follows.
\begin{enumerate}[(i)]
	\item For $r \in [\e^2,\e]$,
	\begin{align*}
	\k_1^{\pm}(r)=\frac{\e^2}{r^2}, \quad \k_2^{\pm}(r)=-\frac{\e^2}{r^2}.
	\end{align*}
	
	\item For $r=(1+\tau)\e \in [\e,2\e]$ with $\tau \in [0,1]$,
	\begin{align*}
	\k_1^{\pm}(\tau)=\pm (1+2\tau-3\tau^2)+O(\e^2),\quad \k_2^{\pm}(\tau)=\mp(1-\tau)^2+O(\e^2),
	\end{align*}
	which implies $H_\e=\pm 2\tau(1-\tau)+O(\e^2)$. For $\tau\in [0,1]$, $|2\tau(1-\tau)|\leq \frac{1}{2}$.
	
	\item For $r=(2+\tau)\e\in [2\e,3\e]$ with $\tau \in [0,1]$,
	\begin{align*}
	\k_1^{\pm}(\tau)=(-2\tau+3\tau^2)+O(\e^2), \quad \k_2^{\pm}(\tau)=&\frac{-\tau^2+\tau^3}{2+\tau}+O(\e^2).
	\end{align*}
	which implies  $H_\e=\frac{\tau(-4+3\tau+4\tau^2)}{2(2+\tau)}+O(\e^2)$.
	For $\tau\in [0,1]$,  $|\frac{\tau(-4+3\tau+4\tau^2)}{2(2+\tau)}|\leq \frac{1}{2}$.
	
	\item For $r\in [3\e,3\e+r_0^{\pm}]$,
	\begin{align*}
	\k_1^{\pm}(r)=1+O(\e^2), \quad \k_2^{\pm}(r)=1-\frac{3\e}{r}+O(\e^2).
	\end{align*}
	Hence $H_\e=1-\frac{3\e}{2r}+O(\e^2)$.
\end{enumerate}

By (i), the 2nd fundamental form of the catenoid $N_\e$, $|B_\e|^2=\frac{2\e^4}{r^4}$. Since the induced metric is $g_{N_\e}=\(\frac{r^2}{r^2-\e^4}\)dr^2+r^2 d\t^2$, the volume form $d\mu=\frac{r^2}{\sqrt{r^2-\e^4}} dr d\t$. So we derive
\begin{align*}
\int_{N_\e} |B_\e|^2 d\mu=2\int_{\e^2}^{\e} \int_0^{2\pi} 2\frac{\e^4}{r^4} \frac{r^2}{\sqrt{r^2-\e^4}} dr d\t=8\pi \sqrt{1-\e^2}<+\infty.
\end{align*}

For $q>2$,
\begin{align*}
\int_{N_\e} |B_\e|^q d\mu=&2\int_{\e^2}^{\e} \int_0^{2\pi} \(2\frac{\e^4}{r^4}\)^\frac{q}{2} \frac{r^2}{\sqrt{r^2-\e^4}} dr d\t \\
=&4\pi 2^\frac{q}{2} \e^{4-2q} \int_0^{\operatorname{arccosh}(1/\e)} \cosh^{2-2q}(t) dt \\
\geq &4\pi 2^\frac{q}{2} \e^{4-2q}  \cosh^{2-2q}(1) \ra \infty, \quad \text{as $\e\ra 0$}.
\end{align*}

Similary, for $1\le q<2$, $\|B_\epsilon\|_q=O(\epsilon^{4-2q}\log \e)$ and for $0<q\le 1$, $\|B_\epsilon\|_q=O(\epsilon^2\log \e)$.

By our construction, the part $\{\e\leq r\leq 3\e\}\cap M_\e$ has bounded principal curvatures and small area.

For the part $\{r\geq 3\e\}\cap M_\e$, its principal curvatures $\le 1$, and the area is close to $2 \omega_2$.

Summarizing above, a hypersurface formed by $l$ times gluing as above satisfies $C_1 l\le \|B_\e\|_{2}\leq C_2 l$, 
$\|B_\e\|_q \ra \infty$ $(q>2)$, and for $1\le l\le (-\log\e)^{\frac{1}{2}}$,  $\|B_\e\|_q \to 0$ $(0<q<2)$ as $\e\ra 0$.

\end{document}